\newtheorem{theorem}{Theorem}[section]
\newtheorem{lemma}{Lemma}[section]
\newtheorem{malgorithm}{Algorithm}[section]
\newtheorem{example}{Example}[section]
\newtheorem{corollary}{Corollary}[section]
\newtheorem{assumption}{Assumption}
\newtheorem{remark}{Remark}[section]
\newtheorem{alemma}{Lemma}
 \newenvironment{aproof}{{\noindent}}{\hfill$\Box$\medskip}
 \newenvironment{proof}{{\noindent \bf Proof:}}{\hfill$\Box$\medskip}
\definecolor{lred}{rgb}{1,0.8,0.8}
\definecolor{lblue}{rgb}{0.8,0.8,1}
\definecolor{dred}{rgb}{0.6,0,0}
\definecolor{dblue}{rgb}{0,0,0.5}
\definecolor{dgreen}{rgb}{0,0.5,0.5}
 \title{GEP-MSCRA for computing the group zero-norm regularized least squares estimator\footnote{Supported by the National Natural Science Foundation of China under project No.11571120 and No.11701186, and the Natural Science Foundation of Guangdong Province under project No.2015A030313214 and No.2017A030310418.}}
 \author{Shujun Bi\footnote{bishj@scut.edu.cn. School of Mathematics, South China University of Technology, Guangzhou.}
 \ \ {\rm and}\ \
 Shaohua Pan\footnote{shhpan@scut.edu.cn. School of Mathematics, South China University of Technology, Guangzhou.}}
\begin{document}

 \maketitle

 \begin{abstract}
  This paper concerns with the group zero-norm regularized least squares estimator
  which, in terms of the variational characterization of the zero-norm, can be obtained
  from a mathematical program with equilibrium constraints (MPEC). By developing
  the global exact penalty for the MPEC, this estimator is shown to arise from
  an exact penalization problem that not only has a favorable bilinear structure
  but also implies a recipe to deliver equivalent DC estimators such as the SCAD
  and MCP estimators. We propose a multi-stage convex relaxation approach
  (GEP-MSCRA) for computing this estimator, and under a restricted strong convexity
  assumption on the design matrix, establish its theoretical guarantees
  which include the decreasing of the error bounds for the iterates to the true
  coefficient vector and the coincidence of the iterates after finite steps with
  the oracle estimator. Finally, we implement the GEP-MSCRA with the subproblems
  solved by a semismooth Newton augmented Lagrangian method (ALM) and compare
  its performance with that of SLEP and MALSAR, the solvers for the weighted
  $\ell_{2,1}$-norm regularized estimator, on synthetic group sparse regression problems
  and real multi-task learning problems. Numerical comparison indicates that
  the GEP-MSCRA has significant advantage in reducing error and achieving
  better sparsity than the SLEP and the MALSAR do.
 \end{abstract}
 \noindent
 {\bf Keywords:}\ group sparse regression; group zero-norm; global exact penalty; GEP-MSCRA


  \section{Introduction}\label{sec1}

  Many regression and learning problems aim at finding important explanatory factors
  in predicting the response variable, where each explanatory factor may be
  represented by a group of derived input variables
  (see, e.g., \cite{Evgeniou-ML,Yuan06,Huang-AOS,Meier08,Obozinski10,ZhangGY08,Ye-JMLR,Nandy17}).
  The most common example is the multifactor analysis-of-variance problem,
  in which each factor may have several levels and can be expressed through
  a group of dummy variables. Let $\mathcal{J}_1,\mathcal{J}_2,\ldots,\mathcal{J}_m$
  be a collection of index sets to represent the group structure of explanatory factors,
  where $\mathcal{J}_i\cap\mathcal{J}_j=\emptyset$ for all $i\ne j\in\{1,\ldots,m\}$
  and $\bigcup_{i=1}^m\mathcal{J}_i=\{1,2,\ldots,p\}$. This class of regression problems
  can be stated via the following observation model
  \begin{equation}\label{model}
    b ={\textstyle \sum_{i=1}^m} A_{\!_{\mathcal{J}_i}}\overline{x}_{\!_{\mathcal{J}_i}}+\varepsilon,
  \end{equation}
  where $\overline{x}\in\mathbb{R}^p$ is the true (unknown) coefficient vector,
  $A_{\!_{\mathcal{J}_i}}\,(i=1,\ldots,m)$ is an $n\times |\mathcal{J}_i|$ design matrix
  corresponding to the $i$th factor, and $\varepsilon\in\mathbb{R}^n$ is the noise vector.
  Clearly, when $\mathcal{J}_i=\{i\}$ for $i=1,2,\ldots,m$,
  \eqref{model} reduces to the common linear regression model.

  \medskip

  Sparse estimation, using penalization or regularization technique to perform
  variable selection and estimation simultaneously, has become a mainstream approach
  especially for high-dimensional data \cite{Bhlmann11}. In the past decade, some popular penalized estimators
  were proposed one after another, including the $\ell_1$-type estimators such as
  the Lasso \cite{Tib96} and the Dantzig \cite{CT07}, and the nonconvex penalized
  estimators such as the SCAD \cite{FanLi01} and the MCP \cite{Zhang10}. For the model \eqref{model},
  one may embrace the $\ell_{2,1}$-norm regularized estimator due to the simplicity of computation
  (see, e.g., \cite{Bach08,Yuan06}), but this estimator inherits the bias of the Lasso.
  The major reason for this dilemma is the significant difference between the $\ell_{1}$-norm
  and the zero-norm (or cardinality function). To enhance the quality of the $l_{1}$-type selector,
  some researchers focused on the estimator induced by nonconvex surrogates for the zero-norm
  regularized problem, such as the bridge \cite{HHM}, the SCAD \cite{FanLi01} and
  the MCP \cite{Zhang10}. In particular, some algorithms were also developed
  for computing these nonconvex penalized estimators
  \cite{FanLi01,Zou06,MFH-JASA,BH-AAS,Breheny-SC15}; for example,
  the local quadratic approximation (LQA) algorithm \cite{FanLi01} and
  the local linear approximation approximation (LLA) algorithm \cite{Zou06}.
  Recently, Fan, Xue and Zou  \cite{FanXZ14} also provided a unified theory to
  show explicitly how to obtain the oracle solution via the LLA algorithm
  for the class of folded concave penalized estimators,
  which covers the SCAD and MCP as special cases.

  \medskip

  Let $A\!:=[A_{\!_{\mathcal{J}_1}}\ A_{\!_{\mathcal{J}_2}}\cdots A_{\!_{\mathcal{J}_m}}]\in\mathbb{R}^{n\times p}$
  and $\mathcal{G}(x)\!:=\big(\|x_{\!_{\mathcal{J}_1}}\|,\|x_{\!_{\mathcal{J}_2}}\|,\ldots,\|x_{\!_{\mathcal{J}_m}}\|\big)^{\mathbb{T}}$
  for $x\in\mathbb{R}^p$. In this work, we are interested in the following group zero-norm regularized estimator
  \begin{equation}\label{G-Sparse}
   \widehat{x}\in\mathop{\arg\min}_{x\in\Omega}\left\{\frac{\nu}{2n}\|Ax-b\|^2+\|\mathcal{G}(x)\|_{0}\right\},
  \end{equation}
  where $\nu> 0$ is the regularization parameter, $\|\cdot\|_0$ denotes
  the zero-norm of a vector, and $\Omega:=\{x\in\mathbb{R}^{p}\!: \|x\|_\infty\le R\}$
  for some $R>0$. Here, the simple constraint $x\in\Omega$ is imposed to \eqref{G-Sparse}
  in order to guarantee that the group zero-norm estimator $\widehat{x}$ is well-defined.
  In fact, similar simple constraints are also used for the $\ell_1$-regularized models
  (see \cite{Agarwal12}).
  The estimator $\widehat{x}$ may be unacceptable for many statisticians since,
  compared with the convex $\ell_{2,1}$-regularized estimator and the nonconvex
  SCAD and MCP penalized estimators, it seems that $\widehat{x}$ is unapproachable
  due to the combinatorial property of the zero-norm. The main motivation for us
  to study such an estimator comes from the following facts:
  \begin{itemize}
    \item {\bf Good group sparsity and unbiasedness of $\widehat{x}$}.
          By the definition of $\|\cdot\|_0$, clearly, $\widehat{x}$ can automatically
          set small estimated coeffcients to be zero, which reduces well the model complexity.
          In addition, by following the analysis in \cite[Section 2]{FanLi01}, $\widehat{x}$
          is nearly unbiased when $A$ is orthonormal and the true coefficients are not too small.

   \item  {\bf The estimator $\widehat{x}$ is the restriction of the SCAD and MCP over
           the ball $\Omega$}. The SCAD and MCP estimators were well studied in the past ten years,
          but there are few works to discuss their relation with the zero-norm
          regularized estimator except that they are effective nonconvex surrogates
          of the latter. In Section \ref{sec2}, we shall show that the SCAD and MCP
          functions arise from the global exact penalty for the equivalent
          MPEC (mathematical program with equilibrium constraints) of \eqref{G-Sparse},
          and so $\widehat{x}$ is the restriction of the SCAD and MCP
          estimators over the ball $\Omega$.

   \item  {\bf Approachability of the estimator $\widehat{x}$}. As will be shown in Section \ref{sec3},
          with the global exact penalty for the MPEC of \eqref{G-Sparse}
          which is actually a primal-dual equivalent model of \eqref{G-Sparse},
          there is a large space to design efficient algorithms for computing $\widehat{x}$.
  \end{itemize}

  Specifically, by means of the variational characterization of the zero-norm,
  the group zero-norm regularized problem \eqref{G-Sparse} can be rewritten as an MPEC.
  We show that the penalty problem, yielded by moving the equilibrium constraint
  into the objective, is a global exact penalty for the MPEC in the sense that
  it has the same global optimal solution set as the MPEC does. Consequently,
  one may approach the estimator $\widehat{x}$ by solving a global exact penalization problem.
  This result is significant since, on one hand, the global exact penalty is an Lipschitz
  continuous optimization problem whose objective function possesses a structure favorable
  to the design of effective algorithms; and on the other hand, it provides a recipe
  to deliver equivalent DC (difference of convex functions) penalized functions whose
  global minimizers provide an estimator with three desirable properties stated in \cite{FanLi01};
  for example, the popular SCAD and MCP penalized estimators.

  \medskip

  By the biconvex structure of the global exact penalty,
  we solve it in an alternating way and develop a multi-stage convex
  relaxation approach called GEP-MSCRA for computing $\widehat{x}$
  (see Section \ref{sec3}). The GEP-MSCRA consists of solving a sequence of
  weighted $\ell_{2,1}$-norm regularized subproblems. In this sense, it is similar
  to the LLA algorithm \cite{Zou06} for the nonconcave penalized likelihood model.
  However, it is worth emphasizing that the start-up of the LLA algorithm
  depends implicitly on an initial estimator $x^0$, while the start-up of
  the GEP-MSCRA depends explicitly on a dual variable $w^0$. In addition,
  the involving subproblems may be different since the subproblems of the LLA
  are obtained from the primal angle, and those of the GEP-MSCRA are yielded
  from the primal-dual angle. For the proposed GEP-MSCRA, under a restricted strong convex
  (RSC) assumption on $A$, we verify in Section \ref{sec4} that the error bounds
  of the iterates to the true $\overline{x}$ is decreasing as the number of
  iterates increases, and if the smallest nonzero group element of $\overline{x}$
  is not too small, the iterates after finite steps will coincide with
  the oracle solution, and hence the support of $\overline{x}$ is exactly identified.
  Since the RSC assumption holds with high probability by \cite{Negahban12},
  the GEP-MSCRA has the theoretical guarantees in a statistical sense.

  \medskip

  We implement the GEP-MSCRA by solving the weighted $\ell_{2,1}$-norm regularized
  subproblems with a semismooth Newton ALM. The semismooth Newton ALM is a dual
  method that can solve the weighted $\ell_{2,1}$-norm regularized problems
  more efficiently than the existing first-order methods by exploiting
  the second-order information of the objective function in an economic way.
  As illustrated in \cite[Section 3.3]{LiSunToh16}, the dual structure of
  the weighted $\ell_{2,1}$-norm regularized problems implies a nonsingular
  generalized Hessian matrix, which is well suitable for the semismooth
  Newton method. We compare the performance of the GEP-MSCRA with that of
  the SLEP and the MALSAR in \cite{LiuYe09}, the solvers to
  the unconstrained weighted $\ell_{2,1}$-norm regularized least squares problems
  on synthetic group sparse regression problems and real multi-task learning problems,
  respectively. Numerical comparisons demonstrates that the GEP-MSCRA has
  a remarkable advantage in reducing error and achieving the exact group sparsity
  although it requires a little more time than the SLEP and the MALSAR do;
  for example, for the synthetic group sparse regression problems,
  the GEP-MSCRA reduces the relative recovery error of the SLEP at least
  $60\%$ for the design matrix of Gaussian or sub-Gaussian type
  (see the first four subfigures in Figure \ref{fig3}),
  and for the real (School data) multi-task learning, the GEP-MSCRA can reduce
  the prediction error of the MALSAR at least $20\%$ when there are more than
  $50\%$ examples are used as training samples (see Figure \ref{fig8}).

  \medskip

 To close this section, we introduce some necessary notations. We denote $\|A\|$
 by the spectral norm and $\|A\|_{2,\infty}$ by the maximum column norm of $A$,
 respectively. Let $e$ and $I$ denote the vector of all ones and the identity
 matrix, respectively, whose dimensions are known from the context.
 For a convex function $g\!:\mathbb{R}\to(-\infty,+\infty]$,
 $g^*$ denotes the conjugate of $g$; for a given closed set $S\subseteq\mathbb{R}^n$,
 $\delta_{S}(\cdot)$ means the indicator function over the set $S$, i.e., $\delta_{S}(x)=0$
 if $x\in S$ and otherwise $\delta_{S}(x)=+\infty$; and for a given index set
 $F\subseteq\{1,\ldots,m\}$, write $F^{c}=\{1,\ldots,m\}\backslash F$
 and $\mathcal{J}_F:=\bigcup_{i\in F}\mathcal{J}_i$, and denote by
 $\mathbb{I}_{F}(\cdot)$ the characteristic function of $F$, i.e.,
 $\mathbb{I}_{F}(i)=1$ if $i\in F$ and otherwise $\mathbb{I}_{F}(i)=0$.

 \section{A new perspective on the estimator $\widehat{x}$}\label{sec2}

 We shall examine the estimator $\widehat{x}$ from an equivalent MPEC of \eqref{G-Sparse}
 and a global exact penalty of this MPEC, and conclude that $\widehat{x}$
 can be obtained by solving an exact penalty problem which is constructed
 by moving the complementarity (or equilibrium) constraint into
 the objective of the MPEC. For convenience, we write $f(x)\!:=\frac{1}{2n}\|Ax-b\|^2$
 and denote by $L_{\!f}$ the Lipschitz constant of $f$ relative to the set $\Omega$.
 One will see that the results of this section are also applicable to a general
 continuous loss function.

 \medskip

 Let $\Phi$ denote the family of closed proper convex functions
 $\phi\!:\mathbb{R}\to(-\infty,+\infty]$ satisfying $[0,1]\subseteq {\rm int}({\rm dom}\phi)$,
 $\phi(1)=1$ and $\phi(t_{\phi}^*)=0$ where $t_{\phi}^*$ is the unique minimizer
 of $\phi$ over $[0,1]$. Let $\overline{t}_{\phi}$ be the minimum element in
 $[t_{\phi}^*,1)$ such that $\frac{1}{1-t_{\phi}^*}\in\partial\phi(\overline{t}_{\phi})$,
 where $\partial\phi$ is the subdifferential mapping of $\phi$.
 The existence of such $\overline{t}_{\phi}$ is guaranteed by Lemma \ref{lemma1-phi}.

 \medskip

 Now we show that with an arbitrary $\phi\in\Phi$,
 the problem \eqref{G-Sparse} can be rewritten as an MPEC.
 Fix an arbitrary $z\in \mathbb{R}^m$ and $\phi\in\Phi$.
 By the definition of $\Phi$, one may check that
 \begin{equation}\label{vc-zn}
   \|z\|_0=\min_{w\in\mathbb{R}^m}\!\Big\{{\textstyle\sum_{i=1}^m}\phi(w_i)\!: \|z\|_1-\langle w,|z|\rangle=0,\ 0\le w\le e\Big\}.
 \end{equation}
 This variational characterization of $\|\cdot\|_0$ means that the problem \eqref{G-Sparse}
 is equivalent to
 \begin{equation}\label{vc-GS}
   \min_{x\in\Omega,w\in\mathbb{R}^m}\!\bigg\{\nu\!f(x)+\sum_{i=1}^m\phi(w_i)\!:\,
   0\le w\le e,\,\langle e-w,\mathcal{G}(x)\rangle=0\bigg\}
 \end{equation}
 in the following sense: if $x^*$ is globally optimal to \eqref{G-Sparse},
  then $(x^*,\max({\rm sign}(\|\mathcal{G}(x^*)\|),t_{\phi}^*e))$ is
  a global optimal solution of \eqref{vc-GS}; and conversely,
  if $(x^*,w^*)$ is a global optimal solution of \eqref{vc-GS},
  then $x^*$ is globally optimal to \eqref{G-Sparse} with
  $\|\mathcal{G}(x^*)\|_{0}=\sum_{i=1}^m\phi(w^*_i)$. This means that
  the difficulty to compute the estimator $\widehat{x}$ comes from
  the following equilibrium condition
  \begin{equation}\label{equilibrium}
    e-w\ge 0,\ \mathcal{G}(x)\ge 0\ \ {\rm and}\ \
    \langle e\!-\!w,\mathcal{G}(x)\rangle=0.
  \end{equation}
  Also, it is the equilibrium constraint to bring the bothersome nonconvexity
  of \eqref{G-Sparse}. Since the constraint set of \eqref{vc-GS} involves
  the equilibrium constraint \eqref{equilibrium}, we call it an MPEC.

  \medskip

  It is well known that the MPEC is a class of very difficult problems in optimization.
  In the past two decades, there was active research on its theory and algorithms
  especially for the one over the polyhedral cone, and the interested readers may refer to
  \cite{LuoPang96,YeZhu97}. We notice that most of existing algorithms are generic
  and inappropriate for solving \eqref{vc-GS}. To handle the tough equilibrium constraint,
  we here consider its penalized version
  \begin{equation}\label{pvc-GS}
   \min_{x\in\Omega,w\in[0,e]}\!\bigg\{\nu f(x)+\sum_{i=1}^m\phi(w_i) +\rho\langle e\!-\!w,\mathcal{G}(x)\rangle\bigg\}
  \end{equation}
  where $\rho>0$ is the penalty factor. The following theorem states that
  \eqref{pvc-GS} is a global exact penalty for \eqref{vc-GS} in the
  sense that it has the same global optimal solution set as \eqref{vc-GS} does.
  \begin{theorem}\label{exact-penalty}
   Let $\phi\in\Phi$. Then, for every $\rho>\overline{\rho}=\nu L_{\!f}\frac{(1-t_{\phi}^*)\phi_-'(1)}{1-\overline{t}_{\phi}}$,
   we have $\mathcal{S}_{\rho}^*=\mathcal{S}^*$, where $\mathcal{S}_{\rho}^*$
   is the global optimal solution set of \eqref{pvc-GS} associated to $\rho$,
   and $\mathcal{S}^*$ is that of \eqref{vc-GS}.
  \end{theorem}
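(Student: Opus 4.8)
The plan is to eliminate the variable $w$ from \eqref{pvc-GS}. Its objective is separable in $w$ and $\phi$ is finite and continuous on $[0,1]$, so for fixed $x\in\Omega$ the partial minimization over $w\in[0,e]$ gives $\nu f(x)+\sum_{i=1}^m\psi_{\rho}(\|x_{\mathcal{J}_i}\|)$, where
\[
  \psi_{\rho}(t):=\min_{0\le s\le 1}\big\{\phi(s)+\rho\, t\,(1-s)\big\},\qquad t\ge 0 .
\]
First I would collect the elementary properties of $\psi_{\rho}$: it is nonnegative, nondecreasing and concave on $[0,\infty)$ (a pointwise minimum of affine functions of $t$), with $\psi_{\rho}(0)=\phi(t_{\phi}^*)=0$ and $\psi_{\rho}(t)\le\phi(1)=1$; moreover $\psi_{\rho}(t)=1$ if and only if $\rho t\ge\phi_-'(1)$, since in that range $s\mapsto\phi(s)-\rho t s$ is nonincreasing on $[0,1]$ — strictly decreasing on $[0,1)$ when $\rho t>\phi_-'(1)$ — so its minimizer over $[0,1]$ is $s=1$; likewise the minimizer is the unique point $t_{\phi}^*$ when $t=0$. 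Write $t_1:=\phi_-'(1)/\rho$. Since $\Omega$ and $[0,e]$ are compact and the objectives lower semicontinuous, $\mathcal{S}^*$ and $\mathcal{S}_{\rho}^*$ are nonempty, and $(\bar x,\bar w)\in\mathcal{S}_{\rho}^*$ exactly when $\bar x$ minimizes $x\mapsto\nu f(x)+\sum_i\psi_{\rho}(\|x_{\mathcal{J}_i}\|)$ over $\Omega$ and each $\bar w_i$ attains the inner minimum defining $\psi_{\rho}(\|\bar x_{\mathcal{J}_i}\|)$; call the common optimal value $v_{\rho}^*$.

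The easy half: $\psi_{\rho}(t)\le 1$ for $t>0$ and $\psi_{\rho}(0)=0$ give $v_{\rho}^*\le v^*$, where $v^*:=\min_{x\in\Omega}\{\nu f(x)+\|\mathcal{G}(x)\|_0\}$ is the common optimal value of \eqref{G-Sparse} and \eqref{vc-GS}; and every $(x^*,w^*)\in\mathcal{S}^*$ satisfies the equilibrium constraint, hence is feasible for \eqref{pvc-GS} with zero penalty term and objective $v^*$, so once $v_{\rho}^*=v^*$ is shown, $\mathcal{S}^*\subseteq\mathcal{S}_{\rho}^*$ follows.

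The crux is the estimate
\[
  \psi_{\rho}(t)>\nu L_{\!f}\,t\qquad\text{for all }t\in(0,t_1],
\]
valid whenever $\rho>\overline{\rho}$. Fix such a $t$ and let $s^*$ attain the minimum defining $\psi_{\rho}(t)$, so $\psi_{\rho}(t)=\phi(s^*)+\rho t(1-s^*)\ge\rho t(1-s^*)$. If $\rho t\le\tfrac{1}{1-t_{\phi}^*}$, then, since $\tfrac{1}{1-t_{\phi}^*}\in\partial\phi(\overline{t}_{\phi})$, the map $s\mapsto\phi(s)+\rho t(1-s)$ is nondecreasing on $[\overline{t}_{\phi},1]$, so its minimum over $[0,1]$ is attained at some $s^*\le\overline{t}_{\phi}$ and $\psi_{\rho}(t)\ge\rho t(1-\overline{t}_{\phi})$; as $\rho>\overline{\rho}$ and $\phi_-'(1)\ge\tfrac{1}{1-t_{\phi}^*}$ (the secant slope of $\phi$ on $[t_{\phi}^*,1]$), we get $\rho(1-\overline{t}_{\phi})>\nu L_{\!f}$, hence $\psi_{\rho}(t)>\nu L_{\!f}t$. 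If instead $\rho t>\tfrac{1}{1-t_{\phi}^*}$, i.e. $t>\tau:=\tfrac{1}{\rho(1-t_{\phi}^*)}$, then, $\psi_{\rho}$ being nondecreasing with $\psi_{\rho}(\tau)=\phi(\overline{t}_{\phi})+\tfrac{1-\overline{t}_{\phi}}{1-t_{\phi}^*}\ge\tfrac{1-\overline{t}_{\phi}}{1-t_{\phi}^*}$ (the minimum at $\tau$ being attained at $\overline{t}_{\phi}$ because $\rho\tau=\tfrac{1}{1-t_{\phi}^*}\in\partial\phi(\overline{t}_{\phi})$), while $\nu L_{\!f}t\le\nu L_{\!f}t_1=\nu L_{\!f}\phi_-'(1)/\rho$, the identity $\overline{\rho}=\nu L_{\!f}\phi_-'(1)(1-t_{\phi}^*)/(1-\overline{t}_{\phi})$ gives $\psi_{\rho}(t)\ge\tfrac{1-\overline{t}_{\phi}}{1-t_{\phi}^*}>\nu L_{\!f}t_1\ge\nu L_{\!f}t$. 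Granting this estimate, suppose $\bar x$ minimizes $x\mapsto\nu f(x)+\sum_i\psi_{\rho}(\|x_{\mathcal{J}_i}\|)$ over $\Omega$ while some block has $0<\|\bar x_{\mathcal{J}_{i_0}}\|\le t_1$; let $\widetilde x\in\Omega$ agree with $\bar x$ except that its $\mathcal{J}_{i_0}$-block is zeroed. Then $\|\widetilde x-\bar x\|=\|\bar x_{\mathcal{J}_{i_0}}\|$, so by the Lipschitz property of $f$ relative to $\Omega$ the objective drops by at least $\psi_{\rho}(\|\bar x_{\mathcal{J}_{i_0}}\|)-\nu L_{\!f}\|\bar x_{\mathcal{J}_{i_0}}\|>0$, contradicting optimality. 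Hence every nonzero block of $\bar x$ has norm exceeding $t_1$, so $\psi_{\rho}(\|\bar x_{\mathcal{J}_i}\|)$ is $1$ on the support of $\mathcal{G}(\bar x)$ and $0$ elsewhere, i.e. $\sum_i\psi_{\rho}(\|\bar x_{\mathcal{J}_i}\|)=\|\mathcal{G}(\bar x)\|_0$.

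Finally I would assemble the pieces. For such a $\bar x$, $v_{\rho}^*=\nu f(\bar x)+\|\mathcal{G}(\bar x)\|_0\ge v^*$, so $v_{\rho}^*=v^*$ and $\bar x$ also minimizes $\nu f(\cdot)+\|\mathcal{G}(\cdot)\|_0$ over $\Omega$. Take any $(\bar x,\bar w)\in\mathcal{S}_{\rho}^*$: then $\bar x$ is such a minimizer, and $\bar w_i$ attains the inner minimum of $\psi_{\rho}(\|\bar x_{\mathcal{J}_i}\|)$, which by the uniqueness facts of the first step equals $1$ when $\|\bar x_{\mathcal{J}_i}\|>t_1$ and equals $t_{\phi}^*$ when $\|\bar x_{\mathcal{J}_i}\|=0$; in either case $(1-\bar w_i)\|\bar x_{\mathcal{J}_i}\|=0$, so $(\bar x,\bar w)$ is feasible for \eqref{vc-GS} with objective $\nu f(\bar x)+\|\mathcal{G}(\bar x)\|_0=v^*$, whence $(\bar x,\bar w)\in\mathcal{S}^*$. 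This yields $\mathcal{S}_{\rho}^*\subseteq\mathcal{S}^*$, and with $\mathcal{S}^*\subseteq\mathcal{S}_{\rho}^*$ we conclude $\mathcal{S}_{\rho}^*=\mathcal{S}^*$. The one genuinely delicate point is the displayed estimate on $(0,t_1]$: it is precisely there that the explicit threshold $\overline{\rho}$ — built from $\overline{t}_{\phi}$, $t_{\phi}^*$, $\phi_-'(1)$ — enters, and one must be careful with the non-smoothness of $\phi$ and with the position of $\rho t$ relative to $\tfrac{1}{1-t_{\phi}^*}$.
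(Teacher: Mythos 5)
Your proof is correct and follows essentially the same route as the paper's: your lower bound for $\psi_\rho(t)=\min_{s\in[0,1]}\{\phi(s)+\rho t(1-s)\}$, split according to the position of $\rho t$ relative to $\frac{1}{1-t_\phi^*}$ and $\phi_-'(1)$, is exactly the paper's Lemma on $\min_{t\in[0,1]}\{\phi(t)+\omega(1-t)\}$, and your block-zeroing argument via the Lipschitz constant $L_{\!f}$ is the paper's truncation $x\mapsto x^{\rho}$. The only differences are organizational --- you eliminate $w$ by partial minimization and zero one small block at a time, using uniqueness of the inner minimizer to recover the complementarity, whereas the paper works with $(x,w)$ jointly and analyzes the equality case of its truncation inequality --- and these do not change the substance.
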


  The proof of Theorem \ref{exact-penalty} is included in Appendix B.
  From the proof, we see that the constraint $x\in\Omega$ in \eqref{G-Sparse}
  is also necessary to establish the exact penalty for the MPEC.
  By Theorem \ref{exact-penalty} and the equivalence
  between \eqref{vc-GS} and \eqref{G-Sparse}, the estimator $\widehat{x}$ can be computed by
  solving a single penalty problem \eqref{pvc-GS} associated to $\rho>\overline{\rho}$.
  Since $[0,1]\subseteq{\rm int}({\rm dom}\phi)$, the function $\sum_{i=1}^m\phi(w_i)$
  is Lipschitzian relative to $[0,e]$ by \cite[Theorem 10.4]{Roc70},
  and so is the objective function of \eqref{pvc-GS} relative to its feasible set
  $\Omega\times[0,e]$. Thus, compared with the discontinuous nonconvex problem \eqref{G-Sparse},
  the problem \eqref{pvc-GS} is at least an Lipschitz-type one, for which the Clarke
  generalized differential \cite{Clarke83} can be used for its analysis.

  \medskip

  Interestingly, the equivalence between \eqref{G-Sparse} and \eqref{pvc-GS}
  also implies a mechanism to yield equivalent DC surrogates for the group zero-norm
  $\|\mathcal{G}(x)\|_0$, and the popular SCAD function \cite{FanLi01} and
  MCP function \cite{Zhang10} are one of the products. Next we demonstrate this fact. For each $\phi\in\Phi$,
  let $\psi\!:\mathbb{R}\to(-\infty,+\infty]$ be the associated closed proper convex function:
 \begin{equation}\label{phi-psi}
  \psi(t):=\!\left\{\!\begin{array}{cl}
                  \phi(t) &\textrm{if}\ t\in [0,1],\\
                   +\infty & \textrm{otherwise}.
                 \end{array}\right.
 \end{equation}
  By using the function $\psi$, the problem \eqref{pvc-GS} can be rewritten in the following compact form
  \begin{equation*}
   \min_{x\in\Omega,w\in\mathbb{R}^m}
   \bigg\{\nu\!f(x)+\rho\|x\|_{2,1}+\sum_{i=1}^m\big[\psi(w_i)
            -\rho w_i\|x_{\!_{\mathcal {J}_i}}\|\big]\bigg\}.
  \end{equation*}
  Together with the definition of conjugate functions and the above discussion,
  we have
 \begin{equation}\label{surrogate}
  \widehat{x}\in\mathop{\arg\min}_{x\in\Omega}
  \bigg\{f(x)+\frac{1}{\nu}\big[\rho\|x\|_{2,1}-\sum_{i=1}^m\psi^*(\rho\|x_{\!_{\mathcal {J}_i}}\|)\big]\bigg\}
  \ \ {\rm for}\ \rho>\overline{\rho}.
 \end{equation}
 This means that the following function provides an equivalent DC surrogate for $\frac{1}{\nu}\|\mathcal{G}(x)\|_0$:
 \begin{equation}\label{surrogate-fun}
  \Theta(x):=\frac{1}{\nu}{\textstyle\sum_{i=1}^m}\theta(\rho\|x_{\!_{\mathcal {J}_i}}\|)\ \ {\rm with}\ \
  \theta(s):=s-\psi^*(s).
 \end{equation}
 In particular, when $\phi$ is chosen as the one in Example \ref{SCAD},
 it becomes the SCAD function. Indeed, from the expression of $\psi^*$
 in Example \ref{SCAD} below, it follows that
 \[
   \varphi(1)\theta(\tau/{\varphi(1)})=\left\{\!\begin{array}{cl}
                      \tau & \textrm{if}\ |\tau|\leq 1,\\
                       \frac{-\tau^2+2a\tau-1}{2(a-1)}& \textrm{if}\ 1<|\tau|\le a,\\
                      \frac{a+1}{2} & \textrm{if}\ |\tau|>a
                \end{array}\right.
  \]
  which, by setting $s:={\tau}/{\lambda}$ for some constant $\lambda>0$, implies that
  \[
   \lambda^2\varphi(1)\theta({s}/{\lambda\varphi(1)})=\left\{\!\begin{array}{cl}
                      s\lambda & \textrm{if}\ |s|\leq \lambda,\\
                       \frac{-s^2+2as\lambda-\lambda^2}{2(a-1)}& \textrm{if}\ \lambda<|s|\le a\lambda,\\
                      (a+1)\lambda^2/2& \textrm{if}\ |s|>a\lambda.
                \end{array}\right.
  \]
  Thus, when $\mathcal{J}_i=\{i\}$, by taking $\nu=\frac{1}{\lambda^2\varphi(1)}$
  and $\rho=\frac{1}{\lambda\varphi(1)}$, the function $\Theta$ in \eqref{surrogate-fun}
  reduces to the SCAD function in \cite{FanLi01}. Similarly,
  when $\phi$ is chosen as the one in Example \ref{MCP},
  by taking $\nu=\frac{2}{\lambda^2 a}$ and $\rho=\frac{1}{\lambda}$,
  the function $\Theta$ in \eqref{surrogate-fun} becomes the MCP function in \cite{Zhang10}.

  \medskip

  Now we give four examples for $\phi\in\Phi$. In the sequel we shall call
  $\phi_1$-$\phi_4$ as the function in Example \ref{SCAD}-\ref{L12}, respectively,
  and $\psi_1$-$\psi_4$ as the corresponding $\psi$ defined by \eqref{phi-psi}.
  \begin{example}\label{SCAD}
   Take $\phi(t):=\frac{\varphi(t)}{\varphi(1)}$ with
   $\varphi(t):=\frac{a-1}{2}t^2+t$ for $t\in\mathbb{R}$, where $a>1$ is a constant.
   Clearly, $\phi\in\Phi$ with $t_{\phi}^*=0$ and $\overline{t}_{\phi}=\frac{1}{2}$.
   After a simple computation,
   \[
    \psi^*(s)=\left\{\!\begin{array}{cl}
                      0 & \textrm{if}\ |s|\leq \frac{1}{\varphi(1)},\\
                       \frac{(\varphi(1)|s|-1)^2}{2(a-1)\varphi(1)}& \textrm{if}\ \frac{1}{\varphi(1)}<|s|\le \frac{a}{\varphi(1)},\\
                      |s|-\frac{a+1}{2\varphi(1)} & \textrm{if}\ |s|>\frac{a}{\varphi(1)}.
                \end{array}\right.
   \]
 \end{example}
 \begin{example}\label{MCP}
  Let $\phi(t)\!:=\frac{\varphi(t)}{\varphi(1)}$ with $\varphi(t):=\frac{a^2}{4}t^2-\frac{a^2}{2}t+at+\frac{(a-2)_+^2}{4}$ where $a>0$ is a constant and $(\cdot)_+=\max(0,\cdot)$. Clearly, $\phi\in\Phi$
  with $t_{\phi}^*=\frac{(a-2)_+}{a}$ and $\overline{t}_{\phi}=\max(\frac{a-1}{a},\frac{1}{2})$.
  An elementary calculation yields that $\psi^*$ takes the following form
   \[
    \psi^*(s)=\left\{\!\begin{array}{cl}
                      \frac{(a-2)_+^2}{4} & \textrm{if}\ |s|\leq \frac{a-a^2/2}{\varphi(1)},\\
                       \frac{1}{a^2\varphi(1)}\big(\frac{a^2-2a}{2}+s\varphi(1)\big)^2-\frac{(a-2)_+^2}{4\varphi(1)}& \textrm{if}\ \frac{a-a^2/2}{\varphi(1)}<|s|\le \frac{a}{\varphi(1)},\\
                      |s|-1 & \textrm{if}\ |s|>\frac{a}{\varphi(1)}.
                \end{array}\right.
   \]
  \end{example}
 \begin{example}\label{HARD}
  Take $\phi(t):=t$ for $t\in\mathbb{R}$. Clearly, $\phi\in\Phi$
  with $t_{\phi}^*=0$ and $\overline{t}_{\phi}=0$. Also,
  \[
    \psi^*(s)=\left\{\!\begin{array}{cl}
                     s-1 & \textrm{if}\ s>1,\\
                     0  & \textrm{if}\ s\le 1.
                \end{array}\right.
  \]
  In this case, the function $\Theta$ in \eqref{surrogate-fun} is exactly
  the capped $l_1$-surrogate of $\|\mathcal{G}(x)\|_{0}$ in \cite{Ye-JMLR}.
 \end{example}
 \begin{example}\label{L12}
  Let $\phi(t):=\frac{\varphi(t)}{\varphi(1)}$ with
  $\varphi(t):=-t-\frac{q-1}{q}(1-t+\epsilon)^{\frac{q}{q-1}}+\epsilon+\frac{q-1}{q}\,(0<q<1)$
  for $t\in (-\infty, 1+\epsilon]$, where $\epsilon\in(0,0.1)$ is a small constant.
  It is not hard to check that $\phi\in\Phi$ with $t_{\phi}^*=\epsilon$
  and $\overline{t}_{\phi}=1+\epsilon-(\frac{1-\epsilon}{1-\epsilon+\varphi(1)})^{1-q}$.
  Also, $\psi^*(s)=\frac{h(\varphi(1)s)}{\varphi(1)}$ with
  \[
   h(t):=\left\{\begin{array}{cl}
                      t+\frac{q-1}{q}\epsilon^{\frac{q}{q-1}}-\epsilon-\frac{q-1}{q} & \textrm{if}\ t>\epsilon^{\frac{1}{q-1}}-1,\\
                      (1\!+\epsilon)t-\frac{1}{q}(t+1)^q+\frac{1}{q} & \textrm{if}\ (1\!+\epsilon)^{\frac{1}{q-1}}\!-\!1<t\le\!\epsilon^{\frac{1}{q-1}}\!-\!1,\\
                      \frac{q-1}{q}(1+\epsilon)^{\frac{q}{q-1}}-\epsilon-\frac{q-1}{q} & \textrm{if}\ t\le (1\!+\epsilon)^{\frac{1}{q-1}}-1.
                \end{array}\right.
 \]
 \end{example}

 To close this section, we take a look at the local optimality relation
 of \eqref{pvc-GS} and \eqref{G-Sparse}.
 \begin{theorem}\label{local-theorem}
  If $(\overline{x},\overline{w})$ with $\langle e-\overline{w},\mathcal{G}(\overline{x})\rangle=0$
  is a local optimal solution of \eqref{pvc-GS} associated to $\rho>0$,
  then $(\overline{x},\overline{w})$ is locally optimal to \eqref{vc-GS}
  and so is $\overline{x}$ to \eqref{G-Sparse}. If $\overline{x}$ is locally
  optimal to \eqref{G-Sparse}, then $(\overline{x},\overline{w})$ with
  $\overline{w}=\max({\rm sign}(\|\mathcal{G}(\overline{x})\|),t_{\phi}^*e)$
  is locally optimal to \eqref{pvc-GS} for any $\rho>0$.
 \end{theorem}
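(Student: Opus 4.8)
The whole argument hinges on one elementary observation: for $(x,w)\in\Omega\times[0,e]$ one has $\mathcal{G}(x)\ge 0$ and $e-w\ge 0$, so the penalty term $\rho\langle e-w,\mathcal{G}(x)\rangle$ is nonnegative and vanishes exactly where $\langle e-w,\mathcal{G}(x)\rangle=0$; writing $Q_\rho$ and $P$ for the objectives of \eqref{pvc-GS} and \eqref{vc-GS}, this means $Q_\rho\ge P$ on $\Omega\times[0,e]$, with equality precisely on the complementarity set. With $\overline{F}:=\{i:\overline{x}_{\mathcal{J}_i}\ne 0\}$ I would also record two facts used freely: near $\overline{x}$ every active group stays active (by continuity of $\mathcal{G}$), so $\|\mathcal{G}(x)\|_0=\|\mathcal{G}(\overline{x})\|_0+|\{i\in\overline{F}^{c}: x_{\mathcal{J}_i}\ne 0\}|$ there; and, $\phi$ being convex with unique minimizer $t_{\phi}^*$ over $[0,1]$, every local minimizer of $\phi|_{[0,1]}$ equals $t_{\phi}^*$ (the point $1$ is excluded since $\phi(1)=1>0=\phi(t_{\phi}^*)$). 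For the first statement, let $(\overline{x},\overline{w})$ with $\langle e-\overline{w},\mathcal{G}(\overline{x})\rangle=0$ be a local minimizer of \eqref{pvc-GS} on a neighborhood $\mathcal{N}$. For $(x,w)\in\mathcal{N}$ feasible for \eqref{vc-GS} we get $P(x,w)=Q_\rho(x,w)\ge Q_\rho(\overline{x},\overline{w})=P(\overline{x},\overline{w})$, so $(\overline{x},\overline{w})$ is locally optimal for \eqref{vc-GS}. To descend to \eqref{G-Sparse}, I would argue directly: at $x=\overline{x}$ the only $w_i$-dependent term of $Q_\rho$ with $i\in\overline{F}^{c}$ is $\phi(w_i)$, which forces $\overline{w}_i=t_{\phi}^*$, while complementarity forces $\overline{w}_i=1$ on $\overline{F}$, so $\overline{w}=\max({\rm sign}(\mathcal{G}(\overline{x})),t_{\phi}^*e)$; then for $x\in\Omega$ with $(x,\overline{w})\in\mathcal{N}$ the inequality $Q_\rho(x,\overline{w})\ge Q_\rho(\overline{x},\overline{w})$ reads $\nu f(x)+\rho(1-t_{\phi}^*)\sum_{i\in\overline{F}^{c}}\|x_{\mathcal{J}_i}\|\ge\nu f(\overline{x})$, and after shrinking $\mathcal{N}$ so that $\rho(1-t_{\phi}^*)\|x_{\mathcal{J}_i}\|\le 1$ for $i\in\overline{F}^{c}$ the penalty sum is $\le\|\mathcal{G}(x)\|_0-\|\mathcal{G}(\overline{x})\|_0$, whence $\nu f(x)+\|\mathcal{G}(x)\|_0\ge\nu f(\overline{x})+\|\mathcal{G}(\overline{x})\|_0$. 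This half needs no lower bound on $\rho$.

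For the converse, let $\overline{x}\in\Omega$ be locally optimal for \eqref{G-Sparse} and put $\overline{w}:=\max({\rm sign}(\mathcal{G}(\overline{x})),t_{\phi}^*e)$, so $Q_\rho(\overline{x},\overline{w})=\nu f(\overline{x})+\|\mathcal{G}(\overline{x})\|_0$. The plan is to show $(\overline{x},\overline{w})$ is a local minimizer of $Q_\rho$ by estimating the $m$ group contributions $\phi(w_i)+\rho(1-w_i)\|x_{\mathcal{J}_i}\|$ one by one. For $i\in\overline{F}$: near $(\overline{x},\overline{w})$, $\|x_{\mathcal{J}_i}\|$ stays bounded away from $0$ and $w_i$ stays near $1$, so the convex map $t\mapsto\phi(t)+\rho(1-t)\|x_{\mathcal{J}_i}\|$ has left derivative $\phi_-'(1)-\rho\|x_{\mathcal{J}_i}\|$ at $t=1$, which is $\le 0$ once $\rho\|x_{\mathcal{J}_i}\|\ge\phi_-'(1)$; then this map is nonincreasing on $(-\infty,1]$, so its value at $w_i\le 1$ is $\ge\phi(1)=1$. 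For $i\in\overline{F}^{c}$: minimizing over $w_i\in[0,1]$ gives $\phi(w_i)+\rho(1-w_i)\|x_{\mathcal{J}_i}\|\ge\theta(\rho\|x_{\mathcal{J}_i}\|)\ge 0$ with $\theta$ as in \eqref{surrogate-fun}, while for $w$ near $\overline{w}$ the factor $1-w_i$ stays close to $1-t_{\phi}^*$, so this term is at least $\tfrac{1}{2}\rho(1-t_{\phi}^*)\|x_{\mathcal{J}_i}\|$. To control the decrease of $f$ along the groups newly activated by $x$, I would compare $x$ with $\widetilde{x}$ obtained by zeroing its $\overline{F}^{c}$-blocks: $\widetilde{x}$ is again near $\overline{x}$, lies in $\Omega$, and has $\|\mathcal{G}(\widetilde{x})\|_0=\|\mathcal{G}(\overline{x})\|_0$, so local optimality of \eqref{G-Sparse} gives $\nu f(\widetilde{x})\ge\nu f(\overline{x})$, and convexity of $f$ with $\|(\nabla f(\widetilde{x}))_{\mathcal{J}_i}\|\le L_{\!f}$ gives $\nu f(x)\ge\nu f(\widetilde{x})-\nu L_{\!f}\sum_{i\in\overline{F}^{c}}\|x_{\mathcal{J}_i}\|\ge\nu f(\overline{x})-\nu L_{\!f}\sum_{i\in\overline{F}^{c}}\|x_{\mathcal{J}_i}\|$. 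Assembling the bounds, $Q_\rho(x,w)\ge\nu f(\overline{x})+\|\mathcal{G}(\overline{x})\|_0+\big(\tfrac{1}{2}\rho(1-t_{\phi}^*)-\nu L_{\!f}\big)\sum_{i\in\overline{F}^{c}}\|x_{\mathcal{J}_i}\|$, which is $\ge Q_\rho(\overline{x},\overline{w})$ once $\rho(1-t_{\phi}^*)\ge 2\nu L_{\!f}$.

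The main obstacle — the only place where real work is needed — is precisely this group-by-group comparison in the converse, and it is also where the dependence on $\rho$ is delicate: making the $i\in\overline{F}$ terms dominate requires $\rho\ge\phi_-'(1)/\min_{i\in\overline{F}}\|\overline{x}_{\mathcal{J}_i}\|$, and controlling the newly activated groups requires $\rho(1-t_{\phi}^*)$ to exceed the local Lipschitz modulus of $\nu f$, so the natural argument delivers the converse for all $\rho$ above a threshold of the same nature as $\overline{\rho}$ in Theorem \ref{exact-penalty} (sharpened by the smallest active block of $\overline{x}$); I would therefore carry that explicit threshold through the estimates (or simply restrict to $\rho>\overline{\rho}$). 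A cleaner way to package the converse uses the identity $\min_{w\in[0,e]}Q_\rho(x,w)=\nu f(x)+\sum_{i=1}^{m}\theta(\rho\|x_{\mathcal{J}_i}\|)=\nu\big(f(x)+\Theta(x)\big)$ with $\Theta$ from \eqref{surrogate-fun}: it reduces the claim to showing that $\overline{x}$ is a local minimizer of $f+\Theta$ over $\Omega$ and that $\overline{w}$ attains $\min_{w\in[0,e]}Q_\rho(\overline{x},w)$ — both again needing $\rho$ large — after which the chain $Q_\rho(x,w)\ge\nu(f+\Theta)(x)\ge\nu(f+\Theta)(\overline{x})=Q_\rho(\overline{x},\overline{w})$ on the relevant neighborhood finishes the proof.
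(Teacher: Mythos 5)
Your first half is correct, and in one respect cleaner than the paper's own treatment. The paper first passes to local optimality for \eqref{vc-GS} and then descends to \eqref{G-Sparse} by a two-case argument: if $\|\mathcal{G}(x)\|_0$ jumps by at least one, the integer gap beats the Lipschitz variation of $\nu f$ on a ball of radius less than $1/(\nu L_{\!f})$; otherwise the group supports coincide and $f$ is compared directly through the feasibility of $(x,\overline{w})$. You instead pin down $\overline{w}$ exactly ($\overline{w}_i=1$ on the support by complementarity, $\overline{w}_i=t_\phi^*$ off it because a local minimizer of the convex $\phi$ on $[0,1]$ is its unique global minimizer), read off $\nu f(x)+\rho(1-t_\phi^*)\sum_{i\in\overline{F}^c}\|x_{\mathcal{J}_i}\|\ge\nu f(\overline{x})$ with $\overline{F}=\{i:\overline{x}_{\mathcal{J}_i}\ne0\}$, and convert the penalty sum into $\|\mathcal{G}(x)\|_0-\|\mathcal{G}(\overline{x})\|_0$ after shrinking the neighborhood. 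Both routes are valid for every $\rho>0$; yours avoids the case split and the explicit use of $L_{\!f}$.

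The second half is where you and the printed statement part ways, and your suspicion is warranted. Your estimates give local optimality of $(\overline{x},\overline{w})$ for \eqref{pvc-GS} only for $\rho$ above a threshold of the form $\max\big(\phi_-'(1)/\min_{i\in\overline{F}}\|\overline{x}_{\mathcal{J}_i}\|,\ 2\nu L_{\!f}/(1-t_\phi^*)\big)$, whereas the theorem asserts the conclusion for every $\rho>0$. This is not a defect of your method: your own group-by-group estimate shows why the unrestricted claim cannot be salvaged. Take $x=\overline{x}$ and lower a single $w_i$ with $i\in\overline{F}$ slightly below $1$; the objective of \eqref{pvc-GS} changes by $\phi(w_i)-1+\rho(1-w_i)\|\overline{x}_{\mathcal{J}_i}\|=(1-w_i)\big(\rho\|\overline{x}_{\mathcal{J}_i}\|-\phi_-'(1)\big)+o(1-w_i)$, which is negative once $\rho\|\overline{x}_{\mathcal{J}_i}\|<\phi_-'(1)$, so $(\overline{x},\overline{w})$ fails to be a local minimizer for small $\rho$ whenever $\overline{x}\ne0$. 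The paper's own proof of this half slips at exactly the corresponding point: it invokes $\|\mathcal{G}(x)\|_0\le\sum_{i}\phi(w_i)$ ``due to \eqref{vc-zn}'', but \eqref{vc-zn} yields that inequality only for $w$ feasible in the variational characterization, i.e.\ satisfying $\langle e-w,\mathcal{G}(x)\rangle=0$, not for arbitrary $w\in[0,e]$ near $\overline{w}$. So carry your explicit threshold through (or simply restrict to $\rho>\overline{\rho}$ as in Theorem \ref{exact-penalty}); the converse as stated for all $\rho>0$ needs that restriction.
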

 \begin{proof}
  Since $(\overline{x},\overline{w})$ is a local optimal solution of \eqref{pvc-GS}
  associated to $\rho>0$, there exists $\delta'>0$ such that for all
  $(x,w)\in\Omega\times[0,e]$ with $\|(x,w)-(\overline{x},\overline{w})\|\le\delta'$,
  \[
    \nu f(\overline{x})+{\textstyle\sum_{i=1}^m\phi(\overline{w}_i)}+\rho\langle e-\overline{w},\mathcal{G}(\overline{x})\rangle
    \le\nu f(x)+{\textstyle\sum_{i=1}^m\phi(w_i)}+\rho\langle e-w,\mathcal{G}(x)\rangle.
  \]
  Fix an arbitrary $(x,w)\in\mathcal{F}$ with $\|(x,w)-(\overline{x},\overline{w})\|\le\delta'/2$
  where $\mathcal{F}$ denotes the feasible set of \eqref{vc-GS}.
  Then, from the last inequality, it immediately follows that
  \begin{align*}
   &\nu f(\overline{x})+{\textstyle\sum_{i=1}^m\phi(\overline{w}_i)}
    =\nu f(\overline{x})+{\textstyle\sum_{i=1}^m\phi(\overline{w}_i)}+\rho\langle e-\overline{w},\mathcal{G}(\overline{x})\rangle\\
   &\le\nu f(x)+{\textstyle\sum_{i=1}^m\phi(w_i)}+\rho\langle e-w,\mathcal{G}(x)\rangle
   =\nu f(x)+{\textstyle\sum_{i=1}^m\phi(w_i)}.
  \end{align*}
  This shows that $(\overline{x},\overline{w})$ is a locally optimal solution of
  the problem \eqref{vc-GS}.

  \medskip

  We next argue that $\overline{x}$ is locally optimal to \eqref{G-Sparse}.
  Since $(\overline{x},\overline{w})$ is a locally optimal solution of \eqref{vc-GS},
  there exists $\widehat{\delta}>0$ such that for all
  $(x,w)\in\mathcal{F}$ with $\|(x,w)-(\overline{x},\overline{w})\|\le\widehat{\delta}$,
  \begin{equation}\label{temp-ineq}
    \nu f(\overline{x})+{\textstyle\sum_{i=1}^m\phi(\overline{w}_i)}
    \le \nu f(x)+{\textstyle\sum_{i=1}^m\phi(w_i)}.
  \end{equation}
  Let $\delta:=\min(\frac{1}{2\nu L_{\!f}},\frac{c}{4},\widehat{\delta})$
  where $c$ is the minimal nonzero component of $\mathcal {G}(\overline{x})$.
  Fix an arbitrary $x\in \Omega\cap\{z\in \mathbb{R}^p:\,\|z-\overline{x}\|\leq\delta\}$.
  We shall establish the following inequality
 \begin{equation}\label{lemmacover}
  \nu f(\overline{x})+\|\mathcal{G}(\overline{x})\|_{0}
  \le \nu f(x)+\|\mathcal{G}(x)\|_{0},
 \end{equation}
 and consequently $\overline{x}$ is a local optimal solution to \eqref{G-Sparse}.
 If $\|\mathcal{G}(x)\|_{0}\geq \|\mathcal{G}(\overline{x})\|_{0}+1$,
 \[
  \|\mathcal{G}(x)\|_{0}-\|\mathcal{G}(\overline{x})\|_{0}\geq 1>\nu L_f\delta\geq \nu L_f\|x-\overline{x}\|\ge\nu f(\overline{x})-\nu f(x).
 \]
 This implies that inequality \eqref{lemmacover} holds.
 If $\|\mathcal{G}(x)\|_{0}\leq \|\mathcal{G}(\overline{x})\|_{0}$,
 by using $\|x-\overline{x}\|\leq\frac{c}{4}$, we have
 ${\rm supp}(\mathcal{G}(x))={\rm supp}(\mathcal{G}(\overline{x}))$,
 which implies that $\langle e-\overline{w},\mathcal{G}(x)\rangle=0$.
 Thus, $(x,\overline{w})\in\mathcal{F}$ and $\|(x,\overline{w})-(\overline{x},\overline{w})\|\le \widehat{\delta}$.
 From \eqref{temp-ineq}, we obtain
 \(
   \nu f(\overline{x})+{\textstyle\sum_{i=1}^m\phi(\overline{w}_i)}
    \le \nu f(x)+{\textstyle\sum_{i=1}^m\phi(\overline{w}_i)}
 \)
 or
 \(
    \nu f(\overline{x})\le \nu f(x).
 \)
 Along with ${\rm supp}(\mathcal{G}(x))={\rm supp}(\mathcal{G}(\overline{x}))$,
 the inequality \eqref{lemmacover} follows.

 \medskip

  Since $\overline{x}$ is locally optimal to the problem \eqref{G-Sparse},
  there exists $\overline{\delta}>0$ such that for all
  $x\in\Omega$ with $\|x-\overline{x}\|\le\overline{\delta}$,
  it holds that
  \(
    \nu f(\overline{x})+\|\mathcal{G}(\overline{x})\|_0
    \le\nu f(x)+\|\mathcal{G}(x)\|_0.
  \)
  Fix an arbitrary $(x,w)\in\Omega\times[0,e]$ with
  $\|(x,w)-(\overline{x},\overline{w})\|\le\overline{\delta}$.
  Then, for any $\rho>0$,
  \begin{align*}
    \nu f(\overline{x})+{\textstyle\sum_{i=1}^m}\phi(\overline{w}_i)
    +\rho\langle e-\overline{w},\mathcal{G}(\overline{x})\rangle
    &=\nu f(\overline{x})+\|\mathcal{G}(\overline{x})\|_0
    \le\nu f(x)+\|\mathcal{G}(x)\|_0\\
    &\le\nu f(x)+{\textstyle\sum_{i=1}^m}\phi(w_i)\\
    &\le \nu f(x)+{\textstyle\sum_{i=1}^m}\phi(w_i)+\rho\langle e-w,\mathcal{G}(x)\rangle.
  \end{align*}
  where the second inequality is due to \eqref{vc-zn}.
  Thus, $(\overline{x},\overline{w})$ is locally optimal to \eqref{pvc-GS}.
 \end{proof}
  \section{GEP-MSCRA for computing the estimator $\widehat{x}$}\label{sec3}

  From the last section, to compute the estimator $\widehat{x}$,
  one only needs to solve the penalty problem \eqref{pvc-GS} associated to
  $\rho>\overline{\rho}$ with $\phi\in\Phi$, where the threshold $\overline{\rho}>0$
  is easily estimated once $\nu>0$ is given since
  $L_{\!f}=\max_{x\in\Omega}\frac{1}{n}\|A^{\mathbb{T}}(Ax-b)\|
  \le \frac{R\sqrt{p}}{n}\|A\|^2+\frac{1}{n}\|A^{\mathbb{T}}b\|$.
  For a given $\rho>\overline{\rho}$, although the problem \eqref{pvc-GS}
  is nonconvex due to the coupled term $\sum_{i=1}^m w_i\|x_{\!_{\mathcal {J}_i}}\|$,
  its special structure makes it much easier to cope with than do the problem \eqref{G-Sparse}.
  Specifically, when the variable $w$ is fixed, the problem \eqref{pvc-GS} reduces to
  a convex minimization in $x$; and when the variable $x$ is fixed, it reduces to
  a convex minimization in $w$ which, as will be shown below, has a closed-form solution.
  Motivated by this, we propose a multi-stage convex relaxation approach for
  computing $\widehat{x}$ by solving \eqref{pvc-GS} in an alternating way.

  \medskip
	
  \setlength{\fboxrule}{0.7pt}
  \noindent
  \fbox{\parbox{0.96\textwidth}
  {
 \begin{malgorithm} \label{Alg}({\bf GEP-MSCRA for computing $\widehat{x}$})
 \begin{description}
 \item[(S.0)] Choose $\phi\!\in\!\Phi$, $\nu\!>\!0$ and an initial $w^0\!\in\![0,\overline{t}_{\phi}e]$.
              Set $\lambda^0\!=\!\nu^{-1}$ and $k:=1$.

 \item[(S.1)] Compute the following minimization problem
              \begin{equation}\label{expm-subx}
               x^k\in\mathop{\arg\min}_{x\in\Omega}\bigg\{\frac{1}{2n}\|Ax-b\|^2+\lambda^{k-1}\sum_{i=1}^m(1-w_i^{k-1})\|x_{\!_{\mathcal{J}_i}}\|\bigg\}.
             \end{equation}
             \hspace*{0.1cm} If $k=1$, by the information of $x^1$ select a suitable $\rho>\overline{\rho}$
             and set $\lambda^k=\rho\nu^{-1}$.

 \item[(S.2)] Seek an optimal solution $w_i^k\ (i=1,2,\ldots,m)$ to the minimization problem
              \begin{equation}\label{expm-subw}
               w_i^k\in \mathop{\rm arg\min}_{w_i\in[0,1]}
               \left\{\phi(w_i)-\rho w_i\|x^k_{\!_{\mathcal {J}_i}}\|\right\}.
             \end{equation}

 \item[(S.3)] Set $k\leftarrow k+1$, and then go to Step (S.1).
 \end{description}
 \end{malgorithm}
 }
 }

 \bigskip

 \begin{remark}\label{remark-alg}
 {\bf(a)} By the definition of $\psi$, clearly, $w_i^k$ is an optimal solution to \eqref{expm-subw}
 if and only if $w_i^k\in\!\partial\psi^*(\rho\|x_{\!_{\mathcal{J}_i}}^k\|)$.
 Since $\psi^*$ is a convex function in $\mathbb{R}$, the subdifferential
 $\partial\psi^*(\rho\|x_{\!_{\mathcal{J}_i}}^k\|)$ is easily characterized by \cite{Roc70};
 for example, for the function $\phi_1$, it holds that
 \[
   \partial\psi_1^*(\rho\|x_{\!_{\mathcal{J}_i}}^k\|)=\left\{\min\Big(1, \max\Big(\frac{\varphi_1(1)\rho\|x_{\!_{\mathcal{J}_i}}^k\|-1}{a-1},0\Big)\Big)\right\}.
 \]
  Thus, the main computation work in each iterate of the GEP-MSCRA is to solve \eqref{expm-subx}.

  \medskip
  \noindent
  {\bf(b)} When $k\ge 2$, since $w_i^k\in\!\partial\psi^*(\rho\|x_{\!_{\mathcal{J}_i}}^k\|)$ for $i=1,\ldots,m$,
 we have $1-w_i^{k}\in\partial\theta(\rho\|x_{\!_{\mathcal{J}_i}}^k\|)$ for all $i$,
 which means that the subproblem \eqref{expm-subx} for $k\ge 2$ has a similar form to
 the one yielded by applying the linear approximation technique in \cite{Zou06} to
 $\frac{1}{2n}\|Ax-b\|^2+\lambda\Theta(x)$. Together with part (a), the GEP-MSCRA
 is analogous to the LLA algorithm in \cite{Zou06} for nonconvex penalized LS problems
 except the start-up and the weights. We see that the initial subproblem of the GEP-MSCRA
 depends explicitly on the dual variable $w^0$, while the initial subproblem
 of the LLA algorithm depends implicitly on an initial estimator $x^0$.
 This means that the start-up of the GEP-MSCRA is more easily controlled.

 \medskip
 \noindent
 {\bf(c)} By following the first part of proofs for Theorem \ref{local-theorem},
 when an iterate $x^k$ satisfies $\langle e-w^{k-1},\mathcal{G}(x^k)\rangle=0$,
 $x^k$ is a local optimal solution of \eqref{G-Sparse}, and then $(x^k,\overline{w}^k)$
 with $\overline{w}^k=\max({\rm sign}(\|\mathcal{G}(x^k)\|),t_{\phi}^*e)$
 is locally optimal to \eqref{pvc-GS} for any $\rho>0$ by Theorem \ref{local-theorem} .
 \end{remark}

 \section{Statistical guarantees}\label{sec4}

  For convenience, throughout this section, we denote by $\overline{S}$
  the group support of the true $\overline{x}$, i.e.,
  $\overline{S}:=\{i: \overline{x}_{\!_{\mathcal{J}_i}}\ne 0\}$,
  and write $\overline{r}=|\overline{S}|$.
  With $\overline{S}$ and an integer $l>0$, we define
  \[
    \mathcal {C}(\overline{S},l)
    :=\bigg\{z\in \mathbb{R}^p\!: \exists S\supset\overline{S}\ {\rm with}\ |S|\leq l\ {\rm such\ that}\
    \sum_{i\in S^c}\|z_{\!_{\mathcal{J}_i}}\|\leq \frac{2}{1-\overline{t}_{\phi}}\sum_{i\in S}\|z_{\!_{\mathcal{J}_i}}\|\bigg\}.
  \]
  Recall that the matrix $A$ is said to satisfy the RSC of constant
  $\kappa>0$ in a set $\mathcal{C}$ if
  \[
    \frac{1}{2n}\|Ax\|^2\ge\kappa\|x\|^2\quad\ \forall x\in\mathcal{C}.
  \]
  In this section, under an RSC assumption on $A$ over
  $\mathcal {C}(\overline{S},1.5\overline{r})$, we shall establish
  an error bound for the iterate $x^k$ to $\overline{x}$ and
  verify that the error sequence is strictly decreasing as $k$ increases,
  and if in addition the nonzero group vectors of $\overline{x}$ are
  not too small, the iterate $x^k$ of the GEP-MSCRA after finite steps
  satisfies ${\rm supp}(x^k)={\rm supp}(\overline{x})$.
  Throughout the analysis, we assume that the components of the noise vector $\varepsilon$
 are independent (not necessarily identically distributed) sub-Gaussians, i.e.,
 the following assumption holds.
 \begin{assumption}\label{assump}
  Assume that $\varepsilon_i\,(i=1,\ldots,m)$ are independent (but not necessarily identically
  distributed) sub-Gaussians, i.e., there exists $\sigma\!>\!0$ such that for all $i$ and $t\!\in\! \mathbb{R}$
  \[
   \mathbb{E}[\exp(t\varepsilon_i)]\leq\exp(\sigma^2t^2/2).
  \]
 \end{assumption}
  The proofs of the main results of this section are all included in Appendix C.

 \subsection{Theoretical performance bounds}\label{subsec4.1}

  First of all, we characterize the error bound for every iterate $x^k$
  to the true vector $\overline{x}$.
 \begin{theorem}\label{errbound1}
  Let $\widehat{\varepsilon}:=\frac{1}{n}A^\mathbb{T}\varepsilon$.
  Suppose that $A$ has the RSC of constant $\kappa$ over
  $\mathcal {C}(\overline{S},1.5\overline{r})$, and
  \(
  \frac{\sqrt{3}(4-2\overline{t}_{\phi})(1-t_{\phi}^*)}{(3-\overline{t}_{\phi})\kappa}\leq\nu\leq \frac{1-\overline{t}_{\phi}}{(3-\overline{t}_{\phi})\|\mathcal{G}(\widehat{\varepsilon})\|_\infty}.
  \)
  If $\frac{\nu(3-\overline{t}_{\phi})\|\mathcal{G}(\widehat{\varepsilon})\|_\infty}{1-\overline{t}_{\phi}}
  \le\rho\le\sqrt{\frac{(3-\overline{t}_{\phi})\kappa\nu}{\sqrt{3}(4-2\overline{t}_{\phi})(1-t_{\phi}^*)}}$,
  then
  \begin{equation}\label{xkbound}
    \|x^k\!-\!\overline{x}\|
    \le \left\{\begin{array}{cl}
    \frac{\nu^{-1}(4-2\overline{t}_{\phi})}{\kappa(3-\overline{t}_{\phi})}\sqrt{1.5\overline{r}}
    &{\rm if}\ k=1;\\
    \frac{\rho\nu^{-1}(4-2\overline{t}_{\phi})}{\kappa(3-\overline{t}_{\phi})}\sqrt{1.5\overline{r}}
    &{\rm if}\ k=2,3,\ldots,.
    \end{array}\right.
  \end{equation}
 \end{theorem}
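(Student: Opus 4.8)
The plan is to read the $x$-update \eqref{expm-subx} at stage $k$ as a standard weighted $\ell_{2,1}$-regularized least squares problem, run the usual oracle/cone argument while carrying the reweighting coefficients through explicitly, and close the bound \eqref{xkbound} by induction on $k$. Write $\Delta^k:=x^k-\overline{x}$ and let $\mu_i^{(k)}:=\lambda^{k-1}(1-w_i^{k-1})\ge 0$ be the effective group weight, so $\mu_i^{(1)}=\nu^{-1}(1-w_i^0)$ and $\mu_i^{(k)}=\rho\nu^{-1}(1-w_i^{k-1})$ for $k\ge2$. Using $\overline{x}\in\Omega$, the optimality of $x^k$ for \eqref{expm-subx}, and $b=A\overline{x}+\varepsilon$, expanding $\|A\Delta^k-\varepsilon\|^2$ gives the basic inequality
\[
 \frac{1}{2n}\|A\Delta^k\|^2\ \le\ \langle\Delta^k,\widehat{\varepsilon}\rangle+\sum_{i=1}^m\mu_i^{(k)}\big(\|\overline{x}_{\!_{\mathcal{J}_i}}\|-\|x^k_{\!_{\mathcal{J}_i}}\|\big).
\]
Then, with $\langle\Delta^k,\widehat{\varepsilon}\rangle\le\|\mathcal{G}(\widehat{\varepsilon})\|_\infty\sum_i\|\Delta^k_{\!_{\mathcal{J}_i}}\|$ and the groupwise triangle inequality applied relative to any index set $S\supseteq\overline{S}$ (using $\overline{x}_{\!_{\mathcal{J}_i}}=0$ off $\overline{S}$), the right-hand side is at most $\sum_{i\in S}(\mu_i^{(k)}+\|\mathcal{G}(\widehat{\varepsilon})\|_\infty)\|\Delta^k_{\!_{\mathcal{J}_i}}\|-\sum_{i\in S^c}(\mu_i^{(k)}-\|\mathcal{G}(\widehat{\varepsilon})\|_\infty)\|\Delta^k_{\!_{\mathcal{J}_i}}\|$.

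The next step is to choose $S=S^{(k)}$ and verify $\Delta^k\in\mathcal{C}(\overline{S},1.5\overline{r})$. For $k=1$, take $S=\overline{S}$; since $w^0\in[0,\overline{t}_\phi e]$, every $\mu_i^{(1)}$ lies in $[\nu^{-1}(1-\overline{t}_\phi),\nu^{-1}]$. For $k\ge2$, take $S=\overline{S}\cup\{i\in\overline{S}^c:\,w_i^{k-1}>\overline{t}_\phi\}$. Because $w_i^{k-1}\in\partial\psi^*(\rho\|x^{k-1}_{\!_{\mathcal{J}_i}}\|)$ (Remark \ref{remark-alg}(a)), convex duality together with the definition of $\overline{t}_\phi$ and the monotonicity of $\partial\phi$ (Lemma \ref{lemma1-phi}) forces $\rho\|x^{k-1}_{\!_{\mathcal{J}_i}}\|\ge\frac{1}{1-t_\phi^*}$ whenever $w_i^{k-1}>\overline{t}_\phi$; since $\|x^{k-1}_{\!_{\mathcal{J}_i}}\|=\|\Delta^{k-1}_{\!_{\mathcal{J}_i}}\|$ for $i\in\overline{S}^c$, a Markov-type count gives
\[
 \big|\{i\in\overline{S}^c:\,w_i^{k-1}>\overline{t}_\phi\}\big|\ \le\ \rho^2(1-t_\phi^*)^2\sum_{i\in\overline{S}^c}\|\Delta^{k-1}_{\!_{\mathcal{J}_i}}\|^2\ \le\ \rho^2(1-t_\phi^*)^2\|\Delta^{k-1}\|^2 .
\]
Substituting the already-established case $k-1$ of \eqref{xkbound} and using $\rho\le\sqrt{(3-\overline{t}_\phi)\kappa\nu/(\sqrt{3}(4-2\overline{t}_\phi)(1-t_\phi^*))}$ makes the right-hand side $\le0.5\overline{r}$, so $|S|\le1.5\overline{r}$; this is precisely the step that dictates the square root in the admissible range of $\rho$, while the lower bound on $\nu$ (together with the upper bound on $\nu$) is what keeps that range nonempty and makes the square-rooted quantity at least $1$. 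In either case $i\in S^c$ implies $w_i^{k-1}\le\overline{t}_\phi$, hence $\mu_i^{(k)}\ge\lambda^{k-1}(1-\overline{t}_\phi)$, while $\mu_i^{(k)}\le\lambda^{k-1}$ for all $i$; moreover the hypotheses on $\nu,\rho$ give $\|\mathcal{G}(\widehat{\varepsilon})\|_\infty\le\lambda^{k-1}(1-\overline{t}_\phi)/(3-\overline{t}_\phi)$. Feeding these three estimates into the right-hand-side bound above and using $\frac{1}{2n}\|A\Delta^k\|^2\ge0$ reduces to $\lambda^{k-1}(1-\overline{t}_\phi)\frac{2-\overline{t}_\phi}{3-\overline{t}_\phi}\sum_{i\in S^c}\|\Delta^k_{\!_{\mathcal{J}_i}}\|\le\lambda^{k-1}\frac{4-2\overline{t}_\phi}{3-\overline{t}_\phi}\sum_{i\in S}\|\Delta^k_{\!_{\mathcal{J}_i}}\|$, i.e.\ (since $4-2\overline{t}_\phi=2(2-\overline{t}_\phi)$) $\sum_{i\in S^c}\|\Delta^k_{\!_{\mathcal{J}_i}}\|\le\frac{2}{1-\overline{t}_\phi}\sum_{i\in S}\|\Delta^k_{\!_{\mathcal{J}_i}}\|$, so $\Delta^k\in\mathcal{C}(\overline{S},1.5\overline{r})$.

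Finally I would apply the RSC assumption on $\mathcal{C}(\overline{S},1.5\overline{r})$, $\kappa\|\Delta^k\|^2\le\frac{1}{2n}\|A\Delta^k\|^2$, combine it with the right-hand-side bound (now keeping only the $S$-sum and dropping the nonpositive $S^c$-term), use $\mu_i^{(k)}+\|\mathcal{G}(\widehat{\varepsilon})\|_\infty\le\lambda^{k-1}\frac{4-2\overline{t}_\phi}{3-\overline{t}_\phi}$ and Cauchy--Schwarz $\sum_{i\in S}\|\Delta^k_{\!_{\mathcal{J}_i}}\|\le\sqrt{|S|}\,\|\Delta^k\|\le\sqrt{1.5\overline{r}}\,\|\Delta^k\|$, and cancel one factor of $\|\Delta^k\|$; substituting $\lambda^0=\nu^{-1}$ (the case $k=1$, which needs no induction and where in fact $|S|=\overline{r}$) and $\lambda^{k-1}=\rho\nu^{-1}$ (the cases $k\ge2$, whose induction uses the $k=1$ bound for its first step and the common $k\ge2$ bound thereafter) yields \eqref{xkbound}. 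The principal obstacle is the cardinality control in the third paragraph — showing that the reweighting switches on at most $\overline{r}/2$ spurious groups — since that is the only place where the previous iterate's accuracy enters, and it is what forces the particular square-root calibration of $\rho$ against $\kappa,\nu,\overline{t}_\phi,t_\phi^*$; the bookkeeping with non-uniform weights must also be handled with care so that the cone constant comes out exactly equal to $\frac{2}{1-\overline{t}_\phi}$, matching the definition of $\mathcal{C}(\overline{S},l)$.
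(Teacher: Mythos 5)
Your proposal is correct and follows essentially the same route as the paper's proof: the same basic inequality from the optimality of $x^k$, the same index set $S^{k-1}=\overline{S}\cup\{i\notin\overline{S}:w_i^{k-1}>\overline{t}_{\phi}\}$ with the cone constant $\frac{2}{1-\overline{t}_{\phi}}$ (the paper's Lemma C.5), the same RSC step (Lemma C.6), and the same induction controlling $|S^{k-1}\setminus\overline{S}|\le 0.5\overline{r}$ via $\rho\|x^{k-1}_{\!_{\mathcal{J}_i}}\|\ge\frac{1}{1-t_{\phi}^*}$ whenever $w_i^{k-1}>\overline{t}_{\phi}$, which is exactly where the square-root calibration of $\rho$ and the lower bound on $\nu$ enter. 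The only (immaterial) difference is that you apply Cauchy--Schwarz uniformly over $S$ with the combined weight $\lambda^{k-1}\frac{4-2\overline{t}_{\phi}}{3-\overline{t}_{\phi}}$, whereas the paper bounds the noise term over $S^{k-1}$ and the weight term over $\overline{S}$ separately before recombining; both give the identical constant in \eqref{xkbound}.
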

 \begin{remark}\label{remark-bound}
 {\bf(a)} When $k=1$, the subproblem \eqref{expm-subx} reduces to
 the $\ell_{2,1}$-regularized least squares problem, and the bound
 in \eqref{xkbound} has the same order as the one in
 \cite[Corollary 4]{Negahban12} except that the coefficient $2$ there
 is improved to be $\frac{\sqrt{1.5}(4-2\overline{t}_{\phi})}{3-\overline{t}_{\phi}}$.
 From the choice interval of $\nu$, the worst bound of $x^1$
 is $\frac{\sqrt{0.5\overline{r}}}{1-t_{\phi}^*}$ and that of $x^k$
 for $k\ge 2$ is $\frac{\rho\sqrt{0.5\overline{r}}}{1-t_{\phi}^*}$.

 \medskip
 \noindent
 {\bf(b)} The restriction on $\nu$ and $\rho$ implies that $\lambda^{k-1}\geq \frac{(3-\overline{t}_{\phi})\|\mathcal{G}(\widehat{\varepsilon})\|_\infty}{1-\overline{t}_{\phi}}$.
 Such a restriction on $\lambda^{k-1}$ is also required in the analysis of
 the $\ell_{2,1}$-regularized LS estimator \cite{Negahban12,Lounici-AOS}.
 The choice interval of $\nu$ depends on the RSC property of $A$ in
 $\mathcal {C}(\overline{S},1.5\overline{r})$ and the noise level.
 Clearly, for those problems in which $A$ has a better RSC property over
 $\mathcal {C}(\overline{S},1.5\overline{r})$ or the noise
 $\|\mathcal{G}(\widehat{\varepsilon})\|_\infty$ is smaller,
 there is a larger choice interval for the parameter $\nu$.
 In addition, those $\phi$ with larger $t_{\phi}^*$ and smaller $\overline{t}_{\phi}$
 can deliver a larger choice interval of $\nu$.

 \medskip
 \noindent
 {\bf(c)} If $\frac{(3-\overline{t}_{\phi})\nu\|\mathcal{G}(\widehat{\varepsilon})\|_\infty}{1-\overline{t}_{\phi}}
 \ge \nu L_{\!f}\frac{(1-t_{\phi}^*)\phi_{-}'(1)}{1-\overline{t}_{\phi}}$
 or equivalently
 $L_{\!f}\le \frac{(3-\overline{t}_{\phi})\|\mathcal{G}(\widehat{\varepsilon})\|_\infty}{(1-t_{\phi}^*)\phi_{-}'(1)}$,
 the choice interval of $\rho$ in Theorem \ref{errbound1} is included in
 $[\overline{\rho},+\infty)$. In this case, each subproblem \eqref{expm-subx}
 is a convex approximation of the exact penalty problem \eqref{pvc-GS} in
 a low dimensional space.
 \end{remark}

 Theorem \ref{errbound1} provides an error bound for every iterate of the GEP-MSCRA,
 but it is unclear whether the error bound for the current iterate $x^k$ is better
 than that of the previous iterate $x^{k-1}$, i.e., the error bound sequence
 is decreasing or not. We resolve this problem by bounding $(1-w^{k-1}_i)^2$
 for $i\in\overline{S}$ with $\mathbb{I}_{\Delta}(i)$
 where $\Delta:=\big\{i:\|\overline{x}_{\!_{\mathcal{J}_i}}\|\le 2\phi'_-(1)/\rho\big\}$.
 The following theorem states this main result, and its proof involves
 the index sets
     \begin{equation}\label{Fk}
   F^0:=\overline{S}\ \ {\rm and}\ \
   F^{k}:=\bigg\{i: \big|\|x^{k}_{\!_{\mathcal {J}_i}}\|-\|\overline{x}_{\!_{\mathcal {J}_i}}\|\big|
            \ge \frac{1}{(1\!-\!t_{\phi}^*)\rho}\bigg\}\ \ {\rm for}\ k=1,2,\ldots.
 \end{equation}

  \vspace{-0.5cm}
 \begin{theorem}\label{errbound2}
  Suppose that $A$ has the RSC of constant $\kappa$ over the set
  $\mathcal {C}(\overline{S},1.5\overline{r})$. If the parameter
  $\nu$ and $\rho$ are chosen in the same way as in Theorem \ref{errbound1},
  then for each $k\in\mathbb{N}$,
  \begin{equation}\label{bound-ineq}
   \|x^k-\overline{x}\|\le\frac{\sqrt{3}\|[\mathcal{G}(\widehat{\varepsilon})]_{\overline{S}}\|}{\kappa(\sqrt{3}\!-\!1)}
       +\frac{\sqrt{3}\rho}{\kappa(\sqrt{3}\!-\!1)\nu}\sqrt{{\textstyle\sum_{i\in \overline{S}}}\,\mathbb{I}_{\Delta}(i)}
       +\Big(\frac{1}{\sqrt{3}}\Big)^{k-1}\|x^{1}-\overline{x}\|.
  \end{equation}
 \end{theorem}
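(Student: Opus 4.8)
Throughout, $L_{\!f}$, $\widehat{\varepsilon}$, $F^k$, $\Delta$, $\mathcal{C}(\overline{S},\cdot)$ have the meanings fixed above.

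The strategy is to upgrade the static estimate of Theorem~\ref{errbound1} into a one-step contraction
$$\|x^k-\overline{x}\|\ \le\ \frac{1}{\kappa}\big\|[\mathcal{G}(\widehat{\varepsilon})]_{\overline{S}}\big\|+\frac{\rho}{\kappa\nu}\sqrt{{\textstyle\sum_{i\in\overline{S}}}\mathbb{I}_{\Delta}(i)}\ +\ \frac{1}{\sqrt{3}}\,\|x^{k-1}-\overline{x}\|\qquad(k\ge 2),$$
and then to unroll it down to $x^1$: summing the geometric series $\sum_{j\ge 0}(1/\sqrt{3})^{j}=\frac{\sqrt{3}}{\sqrt{3}-1}$ turns the first two (constant) summands into exactly the first two terms of \eqref{bound-ineq}, and the leftover $(1/\sqrt{3})^{k-1}\|x^1-\overline{x}\|$ is the third; the case $k=1$ is trivial since the right-hand side of \eqref{bound-ineq} already contains $\|x^1-\overline{x}\|$.

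For a fixed $k\ge 2$, I test optimality of $x^k$ in \eqref{expm-subx} (with $\lambda^{k-1}=\rho\nu^{-1}$) against $\overline{x}$ and expand $f$ around $\overline{x}$ via $\nabla f(\overline{x})=-\widehat{\varepsilon}$ and the quadratic form of $f$, obtaining
$$\tfrac{1}{2n}\|A(x^k\!-\!\overline{x})\|^2\le\langle\widehat{\varepsilon},x^k\!-\!\overline{x}\rangle+\rho\nu^{-1}{\textstyle\sum_i}(1-w_i^{k-1})\big(\|\overline{x}_{\!_{\mathcal{J}_i}}\|-\|x^k_{\!_{\mathcal{J}_i}}\|\big).$$
On $\overline{S}$ I use $\|\overline{x}_{\!_{\mathcal{J}_i}}\|-\|x^k_{\!_{\mathcal{J}_i}}\|\le\|x^k_{\!_{\mathcal{J}_i}}-\overline{x}_{\!_{\mathcal{J}_i}}\|$ and Cauchy--Schwarz to isolate the factors $\|[\mathcal{G}(\widehat{\varepsilon})]_{\overline{S}}\|$ and $\big(\sum_{i\in\overline{S}}(1-w_i^{k-1})^2\big)^{1/2}$; off $\overline{S}$ the bracket is $-\|x^k_{\!_{\mathcal{J}_i}}\|$, and since each $w_i^{k-1}\in\partial\psi^*(\rho\|x^{k-1}_{\!_{\mathcal{J}_i}}\|)$ (Remark~\ref{remark-alg}(a)), the defining property of $\overline{t}_{\phi}$ plus monotonicity of $\partial\psi^*$ give $1-w_i^{k-1}\ge 1-\overline{t}_{\phi}$ for $i\notin\overline{S}\cup F^{k-1}$, so the lower bound on $\rho$ makes the penalty dominate the noise off $\overline{S}\cup F^{k-1}$. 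As in the proof of Theorem~\ref{errbound1}, this puts $x^k-\overline{x}$ in $\mathcal{C}(\overline{S},1.5\overline{r})$; here the count $|F^{k-1}\setminus\overline{S}|\le 0.5\overline{r}$ follows from $|F^{k-1}|\big(\tfrac{1}{(1-t_{\phi}^*)\rho}\big)^2\le\|x^{k-1}-\overline{x}\|^2$, the bound of Theorem~\ref{errbound1}, and the upper bound on $\rho$. Hence the RSC applies, and after cancelling one factor $\|x^k-\overline{x}\|$,
$$\kappa\,\|x^k-\overline{x}\|\ \le\ \big\|[\mathcal{G}(\widehat{\varepsilon})]_{\overline{S}}\big\|+\rho\nu^{-1}\Big({\textstyle\sum_{i\in\overline{S}}}(1-w_i^{k-1})^2\Big)^{1/2}+\|\mathcal{G}(\widehat{\varepsilon})\|_\infty\sqrt{|F^{k-1}\!\setminus\!\overline{S}|},$$
the last summand being the price of off-support indices that escape the penalty.

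The decisive step is to bound $\sum_{i\in\overline{S}}(1-w_i^{k-1})^2$. Split $\overline{S}$ as $(\overline{S}\cap\Delta)\cup(\overline{S}\cap F^{k-1})\cup\big(\overline{S}\setminus(\Delta\cup F^{k-1})\big)$. On a third-block index $i$ — one with $\|\overline{x}_{\!_{\mathcal{J}_i}}\|>2\phi_-'(1)/\rho$ (since $i\notin\Delta$) and $\big|\|x^{k-1}_{\!_{\mathcal{J}_i}}\|-\|\overline{x}_{\!_{\mathcal{J}_i}}\|\big|<\frac{1}{(1-t_{\phi}^*)\rho}$ (since $i\notin F^{k-1}$) — the inequality $\phi_-'(1)\ge\frac{1}{1-t_{\phi}^*}$, valid for every $\phi\in\Phi$, forces $\rho\|x^{k-1}_{\!_{\mathcal{J}_i}}\|>\phi_-'(1)$, hence $w_i^{k-1}=1$ and a zero contribution; on the first two blocks I use the trivial $(1-w_i^{k-1})^2\le 1$. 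Since $|F^{k-1}|\big(\tfrac{1}{(1-t_{\phi}^*)\rho}\big)^2\le\|x^{k-1}-\overline{x}\|^2$, this gives $\sum_{i\in\overline{S}}(1-w_i^{k-1})^2\le\sum_{i\in\overline{S}}\mathbb{I}_{\Delta}(i)+(1-t_{\phi}^*)^2\rho^2\|x^{k-1}-\overline{x}\|^2$, hence $\big(\sum_{i\in\overline{S}}(1-w_i^{k-1})^2\big)^{1/2}\le\sqrt{\sum_{i\in\overline{S}}\mathbb{I}_{\Delta}(i)}+(1-t_{\phi}^*)\rho\|x^{k-1}-\overline{x}\|$, and likewise $\sqrt{|F^{k-1}\setminus\overline{S}|}\le(1-t_{\phi}^*)\rho\|x^{k-1}-\overline{x}\|$. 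Substituting these into the last display and dividing by $\kappa$, the constant part is exactly $\frac{1}{\kappa}\|[\mathcal{G}(\widehat{\varepsilon})]_{\overline{S}}\|+\frac{\rho}{\kappa\nu}\sqrt{\sum_{i\in\overline{S}}\mathbb{I}_{\Delta}(i)}$, while the coefficient of $\|x^{k-1}-\overline{x}\|$ equals $\kappa^{-1}(1-t_{\phi}^*)\rho\big(\rho\nu^{-1}+\|\mathcal{G}(\widehat{\varepsilon})\|_\infty\big)$, which the admissible window for $(\nu,\rho)$ — chiefly $\rho\le\sqrt{(3-\overline{t}_{\phi})\kappa\nu/[\sqrt{3}(4-2\overline{t}_{\phi})(1-t_{\phi}^*)]}$ together with the lower bound on $\rho$ — keeps at most $1/\sqrt{3}$. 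This is the one-step contraction, and unrolling it down to $x^1$ completes the proof.

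The main obstacle is the weight bookkeeping just described: one must extract from the piecewise form of $\partial\psi^*$ the exact dichotomy that, away from the ``bad'' set $F^{k-1}$, the weight $w_i^{k-1}$ equals $1$ on the strong part $\overline{S}\setminus\Delta$ of the support — so that $\sum_{i\in\overline{S}}(1-w_i^{k-1})^2$ collapses to $|\overline{S}\cap\Delta|$ plus a term proportional to $\rho^2\|x^{k-1}-\overline{x}\|^2$, thereby creating the self-referential inequality — and then reconcile the precise constants, including the secondary $F^{k-1}\setminus\overline{S}$ contribution, with the narrow admissible window for $(\nu,\rho)$ so that the contraction factor comes out to the clean $1/\sqrt{3}$. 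Verifying $\phi_-'(1)\ge\frac{1}{1-t_{\phi}^*}$ for every $\phi\in\Phi$ and checking that $x^k-\overline{x}$ stays in $\mathcal{C}(\overline{S},1.5\overline{r})$ are the auxiliary technical points.
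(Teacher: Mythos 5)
Your proposal is correct and follows essentially the same route as the paper's proof: the one-step contraction with factor $1/\sqrt{3}$ obtained from the basic optimality inequality, the RSC over a set of cardinality at most $1.5\overline{r}$, the pointwise weight bound $1-w_i^{k-1}\le \mathbb{I}_{\Delta}(i)+\mathbb{I}_{F^{k-1}}(i)$ on $\overline{S}$ (the paper's Lemma~\ref{indexk}, resting on $\phi_-'(1)\ge\frac{1}{1-t_\phi^*}$), the estimate $\sqrt{|F^{k-1}|}\le(1-t_\phi^*)\rho\|x^{k-1}-\overline{x}\|$, and the geometric-series unrolling. The only (immaterial) difference is that you work with the exceptional set $\overline{S}\cup F^{k-1}$ where the paper uses the slightly smaller $S^{k-1}=\overline{S}\cup\{i\notin\overline{S}:w_i^{k-1}>\overline{t}_{\phi}\}\subseteq\overline{S}\cup F^{k-1}$.
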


  The error bound in \eqref{bound-ineq} consists of three terms:
  the first term is the statistical error induced by the noise,
  the second one is the identification error related to the choice of
  $\rho$ and $\nu$, and the third one is the computation error.
  As will be shown in Subsection \ref{subsec4.2}, the identification error
  will become zero if the parameters $\rho$ and $\nu$ are appropriately chosen.
  Thus, inequality \eqref{bound-ineq} implies that as $k$ increases
  the error bound sequence is decreasing, and it will decrease to
  the statistical error $\|[\mathcal{G}(\widehat{\varepsilon})]_{\overline{S}}\|$
  if the parameters $\rho$ and $\nu$ are appropriately chosen,
  and otherwise it will decrease to the sum of the statistical error
  and the identification error. From \eqref{bound-ineq}, we also see that
  a smaller error bound of $x^1$ is beneficial to reduce the error bounds
  of $x^k$ for $k\ge 2$. In practice, since $\|\mathcal{G}(\widehat{\varepsilon})\|_\infty$
  is unknown, one may replace $\|\mathcal{G}(\widehat{\varepsilon})\|_\infty$ with
  $\|\mathcal{G}(Ax^1-b)\|_\infty$ to estimate the choice interval of $\rho$.
  This means that the error bound of $x^1$ is important to the choice of $\rho$.

  \medskip

  From \cite{Raskutti10} or \cite[Page 549]{Negahban12}, we know that for a design matrix
  $Z\in\mathbb{R}^{n\times p}$ from the $\Sigma$-Gaussian ensemble
  (i.e., $Z$ is formed by independently sampling each row $Z^i\sim N(0,\Sigma)$),
  there exists a constant $\kappa>0$ (depending on the positive definite matrix $\Sigma$)
  such that $Z$ has the RSC over $\mathcal{C}(\overline{S},l)$ with probability
  greater than $1\!-\!c_1\exp(-c_2n)$ as long as $n>c\sum_{i\in\overline{S}}|\mathcal{J}_i|\log p$,
  where $c,c_1$ and $c_2$ are absolutely positive constants.
  Together with Theorem \ref{errbound2} and Lemma \ref{noise-result1} in Appendix A,
  we immediately get the following result.
 \begin{corollary}\label{corollary-errbound}
  Suppose Assumption \ref{assump} holds and
  \(
  \frac{\sqrt{3}(4-2\overline{t}_{\phi})(1-t_{\phi}^*)}{(3-\overline{t}_{\phi})\kappa}\leq\nu\leq \frac{1-\overline{t}_{\phi}}{(3-\overline{t}_{\phi})K},
  \)
  where $K=\frac{\sigma}{n}\sqrt{2\log\frac{2p}{\eta}\|\mathcal{J}\|_\infty}\|A\|_{2,\infty}$
  for some $\eta\in(0,1)$.
  If $\frac{\nu(3-\overline{t}_{\phi})K}{1-\overline{t}_{\phi}}
  \le\rho\le\sqrt{\frac{(3-\overline{t}_{\phi})\kappa\nu}{\sqrt{3}(4-2\overline{t}_{\phi})(1-t_{\phi}^*)}}$,
  then as long as $n>\mathcal{O}(\sum_{i\in\overline{S}}|\mathcal{J}_i|\log p)$,
  for each $k\in\mathbb{N}$ the following inequality
  \[
   \|x^{k}\!-\overline{x}\|\leq\frac{\sqrt{3}}{\kappa(\sqrt{3}\!-\!1)}\Big(K\sqrt{\overline{r}}
   +\frac{\rho}{\nu}\sqrt{{\textstyle\sum_{i\in \overline{S}}}\,\mathbb{I}_{\Delta}(i)}\Big)
   +\Big(\frac{1}{\sqrt{3}}\Big)^{k-1}\|x^{1}\!-\overline{x}\|
  \]
  holds with probability at least $1-\eta$.
 \end{corollary}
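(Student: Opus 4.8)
The plan is to read this corollary off from Theorem \ref{errbound2} by specializing the noise term and invoking the high-probability RSC guarantee for Gaussian-type designs quoted just before the statement. First I would introduce the event
\[
  E:=\big\{\|\mathcal{G}(\widehat{\varepsilon})\|_\infty\le K\big\}
\]
and use Lemma \ref{noise-result1} together with Assumption \ref{assump} to show $\mathbb{P}(E)\ge 1-\eta$. This is the standard sub-Gaussian maximal bound: $[\mathcal{G}(\widehat{\varepsilon})]_i=\tfrac1n\|A_{\!_{\mathcal{J}_i}}^{\mathbb{T}}\varepsilon\|$, so a block-wise concentration inequality combined with a union bound over the $m$ groups (whose block sizes are at most $\|\mathcal{J}\|_\infty$ and whose column norms are at most $\|A\|_{2,\infty}$) produces exactly the constant $K=\frac{\sigma}{n}\sqrt{2\log\frac{2p}{\eta}\|\mathcal{J}\|_\infty}\,\|A\|_{2,\infty}$. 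On $E$ one also has $\|[\mathcal{G}(\widehat{\varepsilon})]_{\overline{S}}\|\le\sqrt{\overline{r}}\,\|\mathcal{G}(\widehat{\varepsilon})\|_\infty\le K\sqrt{\overline{r}}$, which is the only additional elementary estimate needed to pass from the general bound \eqref{bound-ineq} to the displayed one.

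Next I would verify that, on $E$, the parameter restrictions required by Theorem \ref{errbound2} (equivalently by Theorem \ref{errbound1}) follow from the hypotheses of the corollary. The lower bound $\frac{\sqrt{3}(4-2\overline{t}_{\phi})(1-t_{\phi}^*)}{(3-\overline{t}_{\phi})\kappa}\le\nu$ and the upper bound $\rho\le\sqrt{\frac{(3-\overline{t}_{\phi})\kappa\nu}{\sqrt{3}(4-2\overline{t}_{\phi})(1-t_{\phi}^*)}}$ are assumed verbatim. For the other two, the inequality $\|\mathcal{G}(\widehat{\varepsilon})\|_\infty\le K$ valid on $E$ upgrades $\nu\le\frac{1-\overline{t}_{\phi}}{(3-\overline{t}_{\phi})K}$ to $\nu\le\frac{1-\overline{t}_{\phi}}{(3-\overline{t}_{\phi})\|\mathcal{G}(\widehat{\varepsilon})\|_\infty}$, and upgrades $\frac{\nu(3-\overline{t}_{\phi})K}{1-\overline{t}_{\phi}}\le\rho$ to $\frac{\nu(3-\overline{t}_{\phi})\|\mathcal{G}(\widehat{\varepsilon})\|_\infty}{1-\overline{t}_{\phi}}\le\rho$. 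Combined with the RSC of $A$ over $\mathcal{C}(\overline{S},1.5\overline{r})$ with constant $\kappa$, which holds once $n>\mathcal{O}(\sum_{i\in\overline{S}}|\mathcal{J}_i|\log p)$ by the result of \cite{Raskutti10} / \cite[p.~549]{Negahban12} quoted above, all hypotheses of Theorem \ref{errbound2} are in force on $E$, so \eqref{bound-ineq} holds there for every $k\in\mathbb{N}$. Substituting $\|[\mathcal{G}(\widehat{\varepsilon})]_{\overline{S}}\|\le K\sqrt{\overline{r}}$ into \eqref{bound-ineq} turns its first term into $\frac{\sqrt{3}}{\kappa(\sqrt{3}-1)}K\sqrt{\overline{r}}$ and leaves the identification and computation terms untouched, yielding the claimed inequality; as the argument takes place on $E$, it holds with probability at least $1-\eta$.

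I do not expect a genuine obstacle here, since once the event $E$ and the RSC property are in hand the result is a direct substitution into Theorem \ref{errbound2}. The only delicate point is probability bookkeeping: the RSC event carries its own failure probability $c_1\exp(-c_2 n)$, so strictly one either treats the RSC of $A$ (with constant $\kappa$) as part of the hypothesis — it already enters the statement through $\kappa$ — or takes $n$ large enough that $c_1\exp(-c_2 n)$ is negligible and folds it into $\eta$ via a union bound, so that the overall failure probability is still at most (a constant multiple of) $\eta$. Beyond this, no new estimates are required.
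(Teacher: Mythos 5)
Your proposal is correct and follows exactly the route the paper intends: the paper derives this corollary by combining Lemma \ref{noise-result1} (which gives $\|\mathcal{G}(\widehat{\varepsilon})\|_\infty\le K$ with probability at least $1-\eta$), the high-probability RSC guarantee for Gaussian-type designs quoted just before the statement, and a direct substitution of $\|[\mathcal{G}(\widehat{\varepsilon})]_{\overline{S}}\|\le K\sqrt{\overline{r}}$ into the bound of Theorem \ref{errbound2}. Your remark on the probability bookkeeping for the RSC event is a fair point that the paper glosses over, but it does not change the substance of the argument.
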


 \subsection{Group selection consistency}\label{subsec4.2}

 In this part, we shall show that in finite steps the GEP-MSCRA can deliver
 an output $x^l$ satisfying ${\rm supp}(\mathcal{G}(x^l))={\rm supp}(\mathcal{G}(\overline{x}))$
 if the nonzero group vectors of $\overline{x}$ is not too small.
 To this end, we need to assume that the following least squares solution
 belongs to $\Omega$:
 \begin{equation}\label{defxls}
  x^{{\rm LS}}\in\mathop{\arg\min}_{x\in\mathbb{R}^p}\left\{\frac{1}{2n}\|Ax-b\|^2:\
   {\rm supp}(\mathcal{G}(x))\subseteq \mathcal{J}_{\overline{S}}\right\}.
 \end{equation}
 For the solution $x^{{\rm LS}}$, one may establish the following
 $\ell_{\infty}$-norm error bound result.
  \begin{lemma}\label{bound-xLS}
   Suppose that $A$ has the RSC of constant $\kappa$ over the set
   $\mathcal {C}(\overline{S},1.5\overline{r})$. Then,
  \begin{equation}\label{xLS-err}
   \|\mathcal{G}(x^{{\rm LS}}-\overline{x})\|_\infty\le\|\mathcal{G}(\widehat{\varepsilon}^\dag)\|_{\infty}
   \ \ {\rm with}\ \widehat{\varepsilon}^\dagger:=(A^\mathbb{T}_{\mathcal{J}_{\overline{S}}}
   A_{\mathcal{J}_{\overline{S}}})^{-1}A^\mathbb{T}_{\mathcal{J}_{\overline{S}}}\varepsilon.
 \end{equation}
 \end{lemma}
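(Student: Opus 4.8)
The plan is to use the RSC hypothesis only to make $x^{\rm LS}$ well defined through the normal equations, and then to reduce the claimed bound to a direct computation. \textbf{Step 1 (well-posedness of $x^{\rm LS}$).} First I would check that every $z\in\mathbb{R}^p$ with $\mathrm{supp}(z)\subseteq\mathcal{J}_{\overline{S}}$ belongs to $\mathcal{C}(\overline{S},1.5\overline{r})$: choosing $S=\overline{S}$ in the definition of this cone, one has $|\overline{S}|=\overline{r}\le 1.5\overline{r}$ and $\sum_{i\in\overline{S}^{c}}\|z_{\mathcal{J}_i}\|=0\le\frac{2}{1-\overline{t}_{\phi}}\sum_{i\in\overline{S}}\|z_{\mathcal{J}_i}\|$, so $z\in\mathcal{C}(\overline{S},1.5\overline{r})$. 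Applying the RSC inequality to such $z$ gives $\frac{1}{2n}\|A_{\mathcal{J}_{\overline{S}}}u\|^{2}\ge\kappa\|u\|^{2}$ for all $u\in\mathbb{R}^{|\mathcal{J}_{\overline{S}}|}$, whence $A_{\mathcal{J}_{\overline{S}}}$ has full column rank and $A^{\mathbb{T}}_{\mathcal{J}_{\overline{S}}}A_{\mathcal{J}_{\overline{S}}}$ is invertible. Consequently \eqref{defxls} has a unique minimizer, characterized by $x^{\rm LS}_{\mathcal{J}_{\overline{S}}^{c}}=0$ together with the normal equation $x^{\rm LS}_{\mathcal{J}_{\overline{S}}}=(A^{\mathbb{T}}_{\mathcal{J}_{\overline{S}}}A_{\mathcal{J}_{\overline{S}}})^{-1}A^{\mathbb{T}}_{\mathcal{J}_{\overline{S}}}b$.

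\textbf{Step 2 (identifying the error).} Since $\overline{x}$ is supported on $\overline{S}$, the model \eqref{model} reads $b=A_{\mathcal{J}_{\overline{S}}}\overline{x}_{\mathcal{J}_{\overline{S}}}+\varepsilon$. Substituting this into the normal equation above cancels the $\overline{x}_{\mathcal{J}_{\overline{S}}}$ term and leaves $x^{\rm LS}_{\mathcal{J}_{\overline{S}}}-\overline{x}_{\mathcal{J}_{\overline{S}}}=(A^{\mathbb{T}}_{\mathcal{J}_{\overline{S}}}A_{\mathcal{J}_{\overline{S}}})^{-1}A^{\mathbb{T}}_{\mathcal{J}_{\overline{S}}}\varepsilon=\widehat{\varepsilon}^{\dagger}$, while off $\mathcal{J}_{\overline{S}}$ the difference $x^{\rm LS}-\overline{x}$ vanishes. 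Hence for every group index $i$ one has $\|(x^{\rm LS}-\overline{x})_{\mathcal{J}_i}\|=\|\widehat{\varepsilon}^{\dagger}_{\mathcal{J}_i}\|$ if $i\in\overline{S}$ and $\|(x^{\rm LS}-\overline{x})_{\mathcal{J}_i}\|=0$ otherwise; taking the maximum over $i$ yields $\|\mathcal{G}(x^{\rm LS}-\overline{x})\|_{\infty}=\|\mathcal{G}(\widehat{\varepsilon}^{\dagger})\|_{\infty}$, which in particular gives \eqref{xLS-err}.

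The argument involves no genuine obstacle; the only point requiring a little care is Step 1, namely verifying that vectors supported on $\mathcal{J}_{\overline{S}}$ lie in the cone $\mathcal{C}(\overline{S},1.5\overline{r})$ so that the RSC assumption can be invoked to secure the invertibility of $A^{\mathbb{T}}_{\mathcal{J}_{\overline{S}}}A_{\mathcal{J}_{\overline{S}}}$ and hence the closed form of $x^{\rm LS}$. Once this is in place the bound is immediate from the normal equations and routine block bookkeeping.
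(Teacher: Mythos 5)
Your proposal is correct and follows essentially the same route as the paper's proof: use the RSC assumption on vectors supported on $\mathcal{J}_{\overline{S}}$ (which lie in $\mathcal{C}(\overline{S},1.5\overline{r})$ by taking $S=\overline{S}$) to get full column rank of $A_{\mathcal{J}_{\overline{S}}}$, then identify $x^{\rm LS}_{\mathcal{J}_{\overline{S}}}-\overline{x}_{\mathcal{J}_{\overline{S}}}=\widehat{\varepsilon}^{\dagger}$ via the normal equations and $b=A_{\mathcal{J}_{\overline{S}}}\overline{x}_{\mathcal{J}_{\overline{S}}}+\varepsilon$. The only cosmetic difference is that you observe the bound actually holds with equality, whereas the paper states it componentwise and concludes the inequality.
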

 \begin{proof}
  When the matrix $A$ satisfies the RSC over the set
  $\mathcal{C}(\overline{S},l)$ for some $l>\overline{r}$, we have
  \begin{equation}\label{ASbar}
   \sigma_{\min}(A_{\!\mathcal{J}_{\overline{S}}})/\sqrt{n}\ge\sqrt{2\kappa}
   \end{equation}
  and hence $A_{\mathcal{J}_{\overline{S}}}$ has full column rank and $\widehat{\varepsilon}^\dagger$
  is well defined. Here $\sigma_{\min}(A_{\!\mathcal{J}_{\overline{S}}})$ is the smallest
  singular value of the matrix $A_{\!\mathcal{J}_{\overline{S}}}$.
  Indeed, for any $x\in \mathbb{R}^p$ with ${\rm supp}(\mathcal{G}(x))\subseteq \overline{S}$,
  we have
  \(
   \frac{1}{2n}\|Ax\|^2=\frac{1}{2n}\|A_{\!\mathcal{J}_{\overline{S}}}x_{\!_{\mathcal{J}_{\overline{S}}}}\|^2
   \ge\frac{1}{2n}\sigma_{\min}(A_{\!\mathcal{J}_{\overline{S}}})^2
   \|x_{\!_{\mathcal{J}_{\overline{S}}}}\|^2=\frac{1}{2n}\sigma_{\min}(A_{\!\mathcal{J}_{\overline{S}}})^2\|x\|^2,
  \)
  which along with $x\in\mathcal{C}(\overline{S},l)$ implies that $\kappa\le\frac{1}{2n}[\sigma_{\min}(A_{\!\mathcal{J}_{\overline{S}}})]^2$,
  i.e., the inequality \eqref{ASbar} holds. Now by the optimality of $x^{{\rm LS}}$ to
  the problem \eqref{defxls}, we have $A_{\mathcal {J}_{\overline{S}}}^\mathbb{T}(Ax^{{\rm LS}}-b)=0$.
  For $j\in \mathcal {J}_{\overline{S}}$,
  \begin{align}
   \big|x^{{\rm LS}}_j-\overline{x}_j\big|
   &=|e^\mathbb{T}_j(A_{\mathcal {J}_{\overline{S}}}^\mathbb{T}A_{\mathcal{J}_{\overline{S}}})^{-1}
   A_{\mathcal {J}_{\overline{S}}}^\mathbb{T}(A_{\mathcal{J}_{\overline{S}}}\overline{x}_{\!_{\mathcal {J}_{\overline{S}}}}
   -A_{\mathcal {J}_{\overline{S}}}x^{{\rm LS}}_{\!_{\mathcal {J}_{\overline{S}}}})|\nonumber\\
   &=\big|e^\mathbb{T}_j(A_{\mathcal{J}_{\overline{S}}}^\mathbb{T}A_{\mathcal{J}_{\overline{S}}})^{-1}
     A_{\mathcal{J}_{\overline{S}}}^\mathbb{T}(A\overline{x}-b+b-Ax^{{\rm LS}})\big|\\
   &=\big|e^\mathbb{T}_j(A_{\mathcal{J}_{\overline{S}}}^\mathbb{T}A_{\mathcal {J}_{\overline{S}}})^{-1}
     A_{\mathcal{J}_{\overline{S}}}^\mathbb{T}\varepsilon\big|.\nonumber
 \end{align}
  Along with $x_j^{{\rm LS}}=0$ and $\overline{x}_j=0$ for $j\notin\mathcal{J}_{\overline{S}}$,
  we immediately obtain \eqref{xLS-err}.
  \end{proof}

 Now we are ready to state the group selection consistency of the GEP-MSCRA.
 \begin{theorem}\label{theorem-consistency}
  Suppose that the matrix $A$ has the RSC of constant $\kappa$ over the set
  $\mathcal {C}(\overline{S},1.5\overline{r})$ and
  \(
  \frac{(1-t^*_\phi)(1+3\sqrt{5})}{4\kappa}\le\nu
  \le\frac{1-\overline{t}_{\phi}}{2\|\mathcal{G}(\varepsilon^{\rm{LS}})\|_\infty}
  \)
  with $\varepsilon^{\rm{LS}}\!:=\frac{1}{n}A^\mathbb{T}(Ax^{\rm{LS}}\!-b)$.
  If $\rho$ is chosen such that
  $\max\!\big(\frac{2\phi'_-(1)}{\min_{i\in\overline{S}}\|\overline{x}_{\!_{\mathcal {J}_i}}\!\|},
  \frac{2\nu\max(\|\mathcal{G}(\varepsilon^{\rm{LS}})\|_\infty,
  2\kappa\|\mathcal{G}(\widehat{\varepsilon}^\dag)\|_\infty)}{1-\overline{t}_{\phi}}\big)
  <\rho\le\!\sqrt{\frac{4\kappa\nu}{(1-t_{\phi}^*)(1+3\sqrt{5})}}$,
  then for each $k\in\mathbb{N}$
  \begin{align}\label{errk}
  &\|x^k-x^{\rm{LS}}\|\le \frac{\max(1,\sqrt{5}\rho/2)}{\nu\kappa}\sqrt{|F^{k-1}|},\nonumber\\
   &\sqrt{|F^{k}|}\le\frac{\max(1,\rho)\rho(1\!-\!t_{\phi}^*)(1\!+\!3\sqrt{5})}{6\nu\kappa}\sqrt{|F^{k-1}|}.
  \end{align}
  Also, $x^k=x^{\rm{LS}}$ and ${\rm supp}(\mathcal{G}(x^{k}))=\overline{S}$
  for $k\ge\overline{k}:=\lceil\frac{0.5\ln(\overline{r})}
  {\ln(6\nu\kappa)-\ln[(\max(1,\rho)\rho(1-t_{\phi}^*)(1+3\sqrt{5}))]}\rceil+1$.
  \end{theorem}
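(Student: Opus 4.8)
The plan is to establish the two estimates in \eqref{errk} by induction on $k$, carrying along the side invariant $|F^{k-1}\setminus\overline{S}|\le 0.5\overline{r}$ so that the RSC inequality may always be applied on $\mathcal{C}(\overline{S},1.5\overline{r})$; the finite-step conclusion is then immediate because $|F^k|$ is a nonnegative integer that contracts geometrically. The key preliminary is a dichotomy for the weights produced by \eqref{expm-subw}: since $w_i^{k-1}\in\partial\psi^*(\rho\|x^{k-1}_{\mathcal{J}_i}\|)$ with $\partial\psi^*(s)\subseteq[t_\phi^*,1]$ for $s\ge 0$ and $\partial\psi^*(s)=\{1\}$ for $s>\phi'_-(1)$, and since $\phi'_-(1)\ge\frac{1}{1-t_\phi^*}$ (from $\frac{1}{1-t_\phi^*}\in\partial\phi(\overline{t}_\phi)$, $\overline{t}_\phi<1$ and monotonicity of $\partial\phi$), one gets: for $i\in\overline{S}\setminus F^{k-1}$, $\|x^{k-1}_{\mathcal{J}_i}\|>\|\overline{x}_{\mathcal{J}_i}\|-\frac{1}{(1-t_\phi^*)\rho}>\frac{\phi'_-(1)}{\rho}$ by $\rho>\frac{2\phi'_-(1)}{\min_{i\in\overline{S}}\|\overline{x}_{\mathcal{J}_i}\|}$, hence $w_i^{k-1}=1$ and the weight $1-w_i^{k-1}$ vanishes; while for $i\notin\overline{S}$ with $i\notin F^{k-1}$, $\rho\|x^{k-1}_{\mathcal{J}_i}\|<\frac{1}{1-t_\phi^*}$, hence $w_i^{k-1}\le\overline{t}_\phi$ and the weight is $\ge 1-\overline{t}_\phi>0$. (The same choice of $\rho$ makes $\overline{S}\cap\Delta=\emptyset$, so the identification-error term in \eqref{bound-ineq} is absent in this regime.)

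For the first line of \eqref{errk} I would test the optimality inclusion of the convex subproblem \eqref{expm-subx} at $x^k$ against $x^{\rm LS}\in\Omega$: the normal-cone term of $\Omega$ drops with the right sign, and because $A_{\mathcal{J}_{\overline{S}}}^{\mathbb{T}}(Ax^{\rm LS}-b)=0$ the vector $\frac1n A^{\mathbb{T}}(Ax^{\rm LS}-b)=\varepsilon^{\rm LS}$ is supported off $\overline{S}$. Grouping the resulting inequality over $\overline{S}\setminus F^{k-1}$ (zero weight, no contribution), $\overline{S}\cap F^{k-1}$ (contribution $\le\lambda^{k-1}(1-t_\phi^*)\|x^k_{\mathcal{J}_i}-x^{\rm LS}_{\mathcal{J}_i}\|$, with $1-t_\phi^*$ replaced by $1$ when $k=1$), $F^{k-1}\setminus\overline{S}$, and $(\overline{S}\cup F^{k-1})^c$, and absorbing the $\|\mathcal{G}(\varepsilon^{\rm LS})\|_\infty$ terms using $\lambda^{k-1}(1-\overline{t}_\phi)>2\|\mathcal{G}(\varepsilon^{\rm LS})\|_\infty$ (equivalent to the stated upper bound on $\nu$ when $k=1$, and forced by the lower bound on $\rho$ when $k\ge2$), I would reach
\[
\tfrac1n\|A(x^k-x^{\rm LS})\|^2+c\,\lambda^{k-1}\!\!\sum_{i\notin\overline{S}\cup F^{k-1}}\!\!\|x^k_{\mathcal{J}_i}\|\ \le\ \lambda^{k-1}(1-t_\phi^*)\!\!\sum_{i\in F^{k-1}}\!\!\|x^k_{\mathcal{J}_i}-x^{\rm LS}_{\mathcal{J}_i}\|
\]
for a positive constant $c$. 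Dropping the left sum, applying Cauchy--Schwarz over the $|F^{k-1}|$ groups on the right, and invoking $\frac1n\|Az\|^2\ge2\kappa\|z\|^2$ for $z:=x^k-x^{\rm LS}$ yields the first line of \eqref{errk} ($\lambda^0=\nu^{-1}$ for $k=1$ and $\lambda^{k-1}=\rho\nu^{-1}$ for $k\ge2$; the $\max(1,\sqrt5\rho/2)$ only packages the two cases). Keeping the left sum instead and dividing by $c\lambda^{k-1}$ gives $\sum_{i\notin\overline{S}\cup F^{k-1}}\|x^k_{\mathcal{J}_i}\|\le\frac{2(1-t_\phi^*)}{1-\overline{t}_\phi}\sum_{i\in F^{k-1}}\|x^k_{\mathcal{J}_i}-x^{\rm LS}_{\mathcal{J}_i}\|$, i.e. $z\in\mathcal{C}(\overline{S},1.5\overline{r})$ with witness set $\overline{S}\cup F^{k-1}$ --- it is here that the invariant $|F^{k-1}\setminus\overline{S}|\le0.5\overline{r}$ is used, calibrating the constant ``$1.5$'' --- which legitimizes the RSC step above.

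For the second line of \eqref{errk} I would lower-bound $\|x^k_{\mathcal{J}_i}-x^{\rm LS}_{\mathcal{J}_i}\|$ on $F^k$: it equals $\|x^k_{\mathcal{J}_i}\|\ge\frac{1}{(1-t_\phi^*)\rho}$ for $i\in F^k\setminus\overline{S}$, and is $\ge\frac{1}{(1-t_\phi^*)\rho}-\|\mathcal{G}(\widehat{\varepsilon}^\dagger)\|_\infty$ for $i\in F^k\cap\overline{S}$ by the triangle inequality and Lemma \ref{bound-xLS}; the latter is a fixed positive multiple of $\frac{1}{(1-t_\phi^*)\rho}$ by chaining $\rho>\frac{4\kappa\nu\|\mathcal{G}(\widehat{\varepsilon}^\dagger)\|_\infty}{1-\overline{t}_\phi}$ with $\rho\le\sqrt{4\kappa\nu/[(1-t_\phi^*)(1+3\sqrt5)]}$. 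Squaring, summing over $F^k$, and substituting the first line of \eqref{errk} gives $\sqrt{|F^k|}\le\gamma\sqrt{|F^{k-1}|}$ with $\gamma=\frac{\max(1,\rho)\rho(1-t_\phi^*)(1+3\sqrt5)}{6\nu\kappa}$; the two-sided restriction on $\rho$ (with $\nu\ge\frac{(1-t_\phi^*)(1+3\sqrt5)}{4\kappa}$, treating $\rho\le1$ and $\rho>1$ separately) makes $\gamma<1$ and also forces $|F^k\setminus\overline{S}|\le0.5\overline{r}$, so the induction closes from the base case $F^0=\overline{S}$.

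Finally, since $|F^0|=\overline{r}$ and $\sqrt{|F^k|}\le\gamma^k\sqrt{\overline{r}}$ with $0<\gamma<1$, the integer $|F^k|$ must be $0$ once $\gamma^k\sqrt{\overline{r}}<1$, i.e. for $k\ge\overline{k}$; then the first line of \eqref{errk} forces $x^k=x^{\rm LS}$, and ${\rm supp}(\mathcal{G}(x^k))=\overline{S}$ because ${\rm supp}(\mathcal{G}(x^{\rm LS}))\subseteq\overline{S}$ by \eqref{defxls} while $\|x^{\rm LS}_{\mathcal{J}_i}\|\ge\|\overline{x}_{\mathcal{J}_i}\|-\|\mathcal{G}(\widehat{\varepsilon}^\dagger)\|_\infty>0$ for $i\in\overline{S}$ (the strict positivity again following from $\min_{i\in\overline{S}}\|\overline{x}_{\mathcal{J}_i}\|>2\phi'_-(1)/\rho$ together with the two-sided bounds on $\rho$). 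The main obstacle, I expect, is the per-iterate estimate of the second paragraph together with the simultaneous cone check: getting the four-way index split exactly right so that only the $F^{k-1}$-indexed groups survive on the right-hand side, and pinning down all numerical constants so that both $\gamma<1$ and the invariant $|F^{k-1}\setminus\overline{S}|\le0.5\overline{r}$ are preserved; the recursion for $|F^k|$ and the finite-step identification are then routine.
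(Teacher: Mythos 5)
Your proposal is correct and follows essentially the same route as the paper's Appendix C argument: induction on $k$, comparison of the subproblem objective at $x^k$ against the feasible point $x^{\rm LS}$, the weight dichotomy governed by $F^{k-1}$ (so that the $\Delta$-term vanishes under the stated lower bound on $\rho$), RSC applied on the cone with witness set of size at most $1.5\overline{r}$, the lower bound $\|x^k_{\mathcal{J}_i}-x^{\rm LS}_{\mathcal{J}_i}\|\ge\frac{3\sqrt5}{(1+3\sqrt5)(1-t_\phi^*)\rho}$ on $F^k$ via Lemma \ref{bound-xLS}, and termination because the integer $|F^k|$ decays geometrically. The only differences are organizational — you inline what the paper isolates as Lemmas \ref{indexk}, \ref{Lemma-noise1} and \ref{Lemma-noise2}, and you use $\overline{S}\cup F^{k-1}$ directly as the witness set where the paper uses the subset $S^{k-1}=\overline{S}\cup\{i\notin\overline{S}:w_i^{k-1}>\overline{t}_\phi\}$ — and these do not change the substance of the proof.
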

 \begin{remark}\label{remark-consistency}
  {\bf (a)} Theorem \ref{theorem-consistency} shows that if the parameters
  $\nu$ and $\rho$ are appropriately chosen, then the iterate $x^k$ with
  $k>\overline{k}$ coincides with the oracle solution $x^{\rm LS}$ and
  its group support coincides with $\overline{S}$. Similar to Remark \ref{remark-bound}(b),
  for those problems in which $A$ has a better RSC property in
  $\mathcal {C}(\overline{S},1.5\overline{r})$ and the noise
  $\|\mathcal{G}(\varepsilon^{\rm{LS}})\|_\infty$ is smaller,
  the choice interval of $\nu$ is larger. If the smallest nonzero
  group vector of $\overline{x}$ is suitable large, say
  \(
    \min_{i\in\overline{S}}\|\overline{x}_{\!_{\mathcal {J}_i}}\!\|
    \ge\frac{\phi_{-}'(1)(1\!-\!t_{\phi}^*)}{\nu\max(\|\mathcal{G}(\varepsilon^{\rm{LS}})\|_\infty,
  2\kappa\|\mathcal{G}(\widehat{\varepsilon}^\dag)\|_\infty)},
  \)
  then the choice of $\rho$ depends only on the noise. It is not hard to
  observe that those $\phi$ with smaller $\overline{t}_{\phi}$
  and larger $t_{\phi}^*$ lead to a larger choice interval of $\nu$
  and $\rho$ and a smaller $\overline{k}$. Together with Remark \ref{remark-bound}(b),
  the GEP-MSCRA with such $\phi$ is better in terms of the error bound
  and the group consistency.

  \medskip
  \noindent
  {\bf(b)} By Lemma \ref{noise-result2}, we have
  $\|\mathcal{G}(\varepsilon^{\rm{LS}})\|_\infty\le K$
  w.p. at least $1-\eta$ for $\eta\in(0,1)$.
  We next show that $\kappa\|\mathcal{G}(\widehat{\varepsilon}^\dag)\|_\infty\le K$
  w.p. no less than $1-\eta$ for $\eta\in(0,1)$.
  Indeed, by Lemma \ref{LS-noise},
  \[
    \kappa\|\mathcal{G}(\widehat{\varepsilon}^\dag)\|_\infty
    \le \frac{\kappa Kn}{\sigma_{\min}(A_{\mathcal {J}_{\overline{S}}})\|A\|_{2,\infty}}
    \le \frac{K\sqrt{\kappa n}}{\sqrt{2}\|A\|_{2,\infty}}.
  \]
  In addition, since for any $e_j\in \mathbb{R}^p$ with $j=1,2,\ldots,p$,
  we have $e_j\in\mathcal{C}(\overline{S},1.5\overline{r})$
  which, together with $\frac{1}{2n}\|Ae_j\|^2=\frac{1}{2n}\|A_j\|^2\|e_j\|^2$,
  implies that $\sqrt{\kappa}\le\frac{1}{\sqrt{2n}}\|A\|_{2,\infty}$.
  Substituting this relation into the last inequality yields that
  $\kappa\|\mathcal{G}(\widehat{\varepsilon}^\dag)\|_\infty\le K$.
  Thus, $\|\mathcal{G}(\varepsilon^{\rm{LS}})\|_\infty$ and
  $\kappa\|\mathcal{G}(\widehat{\varepsilon}^\dag)\|_\infty$
  have the upper bound of the same order in a high probability.
 \end{remark}

  Using Lemma \ref{noise-result2}-\ref{LS-noise}, Remark \ref{remark-consistency}(b)
  and Theorem \ref{theorem-consistency}, we obtain the following result.
 \begin{corollary}\label{corollary-consistency}
  Suppose that Assumption \ref{assump} holds and
  \(
  \frac{(1-t^*_\phi)(1+3\sqrt{5})}{4\kappa}<\nu\leq\frac{1-\overline{t}_{\phi}}{2K}.
  \)
  If
  \(
  \max\!\Big(\frac{2\phi'_-(1)}{\min_{i\in\overline{S}}\|\overline{x}_{\!_{\mathcal {J}_i}}\!\|},
  \frac{4K\nu}{1-\overline{t}_{\phi}}\Big)\!<\rho\le\!\sqrt{\frac{4\kappa\nu}{(1-t_{\phi}^*)(1+3\sqrt{5})}},
  \)
  then as long as $n>\mathcal{O}(\sum_{i\in\overline{S}}|\mathcal{J}_i|\log p)$,
  we have $x^k=x^{\rm{LS}}$ and ${\rm supp}(\mathcal{G}(x^{k}))=\overline{S}$
  for $k\ge\overline{k}$ w.p. at least $1-2\eta$ for $\eta\in(0,1)$.
  \end{corollary}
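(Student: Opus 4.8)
The strategy is to obtain this corollary as a high-probability wrapper around Theorem \ref{theorem-consistency}: I would replace the two deterministic noise quantities $\|\mathcal{G}(\varepsilon^{\rm{LS}})\|_\infty$ and $\kappa\|\mathcal{G}(\widehat{\varepsilon}^\dag)\|_\infty$ that appear in that theorem by the common surrogate $K$, and invoke the fact that a $\Sigma$-Gaussian design has the RSC over $\mathcal{C}(\overline{S},1.5\overline{r})$ once $n$ is large. First I would recall from Lemma \ref{noise-result2} that $\|\mathcal{G}(\varepsilon^{\rm{LS}})\|_\infty\le K$ holds with probability at least $1-\eta$, and from the computation in Remark \ref{remark-consistency}(b) (which couples Lemma \ref{LS-noise} with the elementary bound $\sqrt{\kappa}\le\frac{1}{\sqrt{2n}}\|A\|_{2,\infty}$) that $\kappa\|\mathcal{G}(\widehat{\varepsilon}^\dag)\|_\infty\le K$ holds with probability at least $1-\eta$. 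A union bound then puts us, with probability at least $1-2\eta$, on the event
\[
  \mathcal{E}:=\big\{\|\mathcal{G}(\varepsilon^{\rm{LS}})\|_\infty\le K\big\}\cap\big\{\kappa\|\mathcal{G}(\widehat{\varepsilon}^\dag)\|_\infty\le K\big\}.
\]
In parallel, by the discussion preceding Corollary \ref{corollary-errbound} (based on \cite{Raskutti10,Negahban12}), the hypothesis $n>\mathcal{O}(\sum_{i\in\overline{S}}|\mathcal{J}_i|\log p)$ guarantees that $A$ has the RSC of a constant $\kappa>0$ over $\mathcal{C}(\overline{S},1.5\overline{r})$, the exceptional probability $c_1\exp(-c_2 n)$ being negligible in this sample-size regime; I would therefore argue on $\mathcal{E}$ together with this RSC event.

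Next I would check that, on this good event, all hypotheses of Theorem \ref{theorem-consistency} are in force. The lower bound $\frac{(1-t^*_\phi)(1+3\sqrt{5})}{4\kappa}<\nu$, the upper bound $\rho\le\sqrt{\frac{4\kappa\nu}{(1-t_{\phi}^*)(1+3\sqrt{5})}}$, the threshold $\overline{k}$, and the standing assumption $x^{\rm{LS}}\in\Omega$ are literally identical in the two statements. The upper bound on $\nu$ transfers because $\|\mathcal{G}(\varepsilon^{\rm{LS}})\|_\infty\le K$ on $\mathcal{E}$ gives $\nu\le\frac{1-\overline{t}_{\phi}}{2K}\le\frac{1-\overline{t}_{\phi}}{2\|\mathcal{G}(\varepsilon^{\rm{LS}})\|_\infty}$. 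The only point needing a short verification is the lower bound on $\rho$: on $\mathcal{E}$ one has $\max(\|\mathcal{G}(\varepsilon^{\rm{LS}})\|_\infty,2\kappa\|\mathcal{G}(\widehat{\varepsilon}^\dag)\|_\infty)\le 2K$, so
\[
  \frac{2\nu\max(\|\mathcal{G}(\varepsilon^{\rm{LS}})\|_\infty,2\kappa\|\mathcal{G}(\widehat{\varepsilon}^\dag)\|_\infty)}{1-\overline{t}_{\phi}}\le\frac{4K\nu}{1-\overline{t}_{\phi}},
\]
and hence the choice $\rho>\max\big(\frac{2\phi'_-(1)}{\min_{i\in\overline{S}}\|\overline{x}_{\!_{\mathcal{J}_i}}\|},\frac{4K\nu}{1-\overline{t}_{\phi}}\big)$ implies the lower bound on $\rho$ demanded by Theorem \ref{theorem-consistency}; in particular the admissible interval for $\rho$ in that theorem is nonempty because Corollary \ref{corollary-consistency} already assumes its (larger) lower bound lies below the common upper bound.

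Finally I would simply invoke Theorem \ref{theorem-consistency} on the good event, which yields $x^k=x^{\rm{LS}}$ and ${\rm supp}(\mathcal{G}(x^{k}))=\overline{S}$ for every $k\ge\overline{k}$; since this holds on an event of probability at least $1-2\eta$ (the RSC failure probability being absorbed for $n$ in the stated range), the corollary follows. I do not expect a genuine obstacle: the argument is a purely mechanical transfer from Theorem \ref{theorem-consistency}, and the only mildly delicate bookkeeping is the factor that turns $2\kappa\|\mathcal{G}(\widehat{\varepsilon}^\dag)\|_\infty\le 2K$ into the coefficient $4$ in the lower bound on $\rho$, together with the two-event union bound that produces the $1-2\eta$ confidence level.
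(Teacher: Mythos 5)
Your proposal is correct and follows essentially the same route as the paper, which derives the corollary by combining the high-probability bounds $\|\mathcal{G}(\varepsilon^{\rm{LS}})\|_\infty\le K$ and $\kappa\|\mathcal{G}(\widehat{\varepsilon}^\dag)\|_\infty\le K$ (Lemmas in Appendix A together with Remark \ref{remark-consistency}(b)), a union bound giving the $1-2\eta$ level, and a direct invocation of Theorem \ref{theorem-consistency} on the resulting good event. The bookkeeping turning $2\kappa\|\mathcal{G}(\widehat{\varepsilon}^\dag)\|_\infty\le 2K$ into the coefficient $4$ in the lower bound on $\rho$ is exactly the intended step.
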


  Corollary \ref{corollary-errbound} and \ref{corollary-consistency} provide
  the theoretical guarantees in statistical sense. We need to point out,
  when a similar column normalization condition is imposed to the design
  matrix $A$, one may follow the analysis in \cite{Negahban12} to improve
  the probability bound results.

  \section{Numerical experiments for the GEP-MSCRA}\label{sec5}

  The GEP-MSCRA consists in solving a sequence of weighted $\ell_{2,1}$-norm regularized
  problems. The key to its implementation is to develop an effective
  solver to \eqref{expm-subx} or equivalently
  \begin{equation}\label{Eprob-sub}
   \min_{x,u\in\mathbb{R}^p,z\in\mathbb{R}^n}\Big\{\frac{1}{2}\|z\|^2
   +{\textstyle\sum_{i=1}^{m}}\omega_i\|x_{\!_{\mathcal {J}_i}}\|+\delta_{\Omega}(u):\ Ax-z=b,\,x-u=0\Big\},
  \end{equation}
  where $\omega_i\!=n\lambda(1-\!w_i^k)$ for $i=1,\ldots,m$ are nonnegative weights.
  There are some solvers developed for the unconstrained counterpart of
  \eqref{expm-subx}; for example, the LARS-type algorithm in \cite{Yau17},
  the R-package {\bf gglasso} developed by Yang and Zou \cite{YangZou15}
  with the groupwise-majorization-descent algorithm,
  the Matlab package {\bf SLEP} developed by Liu and Ye \cite{LiuYe09} with
  the accelerated proximal gradient method \cite{Nesterov07},
  and the semismooth Newton ALM developed by Li, Sun and Toh \cite{LiSunToh16}.
  The first three solvers are solving \eqref{expm-subx} with $\Omega=\mathbb{R}^p$,
  while the last one is solving its dual problem. These solvers can not be
  applied directly to the problem \eqref{Eprob-sub}
  since it involves an additional nonsmooth term $\delta_\Omega(u)$.
 \subsection{Implementation of the GEP-MSCRA}\label{sec5.1}

  Motivated by the good performance of the semismooth Newton ALM (see \cite{LiSunToh16,SunYangToh15}),
  we shall develop it for solving the dual of \eqref{Eprob-sub} which takes the following form
  \begin{equation}\label{Edprob-sub}
   \min_{\xi\in\mathbb{R}^n,\eta,\zeta\in\mathbb{R}^p}\left\{\frac{1}{2}\|\xi\|^2+\langle b,\xi\rangle+R\|\eta\|_1+\delta_{\Lambda}(\zeta)
   :\ A^{\mathbb{T}}\xi+\eta-\zeta=0\right\},
  \end{equation}
  where $\Lambda=\Lambda_1\times\Lambda_2\times\cdots\times\Lambda_m$ with
  $\Lambda_i:=\{z\in\mathbb{R}^{|\mathcal {J}_i|}\ |\ \|z\|\le \omega_i\}$
  for $i=1,2,\ldots,m$. For a given $\sigma>0$, the augmented Lagrangian function
  of problem \eqref{Edprob-sub} is defined as
  \begin{equation*}
   L_\sigma(\eta,\xi,\zeta;x)
   \!:=\frac{1}{2}\|\xi\|^2\!+\langle b,\xi\rangle+\!R\|\eta\|_1+\delta_{\Lambda}(\zeta)
    +\langle x,A^{\mathbb{T}}\xi\!+\eta-\zeta\rangle +\frac{\sigma}{2}\|A^{\mathbb{T}}\xi\!+\eta-\zeta\|^2.
  \end{equation*}
  The iteration steps of the augmented Lagrangian method for \eqref{Edprob-sub} is described as follows.

 \begin{algorithm}[t]
 \caption{\label{SNAL}{\bf\ \ An inexact ALM for the dual problem \eqref{Edprob-sub}}}
 \textbf{Initialization:} Choose $\sigma_0>0$ and a starting point $(\eta^0,\xi^0,\zeta^0,x^0)$. Set $j=0$.\\
 \textbf{while} the stopping conditions are not satisfied \textbf{do}
 \begin{enumerate}
  \item  Solve the following nonsmooth convex minimization problem inexactly
         \begin{equation}\label{ALM-subprob}
                  (\eta^{j+1},\xi^{j+1},\zeta^{j+1})
                  \approx\mathop{\arg\min}_{\xi\in\mathbb{R}^{n},\eta,\zeta\in\mathbb{R}^{p}}
                   L_{\sigma_j}(\eta,\xi,\zeta;x^j).
          \end{equation}
  \item Update the multiplier by the formula
                \(
                  x^{j+1}=x^j+\sigma_{j}(A^{\mathbb{T}}\xi^{j+1}+\eta^{j+1}-\zeta^{j+1}).
                \)
  \item Update $\sigma_{j+1}\uparrow\sigma_\infty\leq\infty$. Set~$j\leftarrow j+1$, and then go to Step 1.
 \end{enumerate}
  \textbf{end while}
  \end{algorithm}	

  Observe that the augmented Lagrangian subproblem \eqref{ALM-subprob} is a two-block
  nonsmooth convex program. We use the accelerated block coordinate descent (ABCD) method
  to seek $(\eta^{j+1},\xi^{j+1},\zeta^{j+1})$ in \eqref{ALM-subprob}.
  The iterations of the ABCD method are described below.
 \begin{algorithm}[!h]
 \caption{\label{ABCD}{\bf\ \ An ABCD for solving the Lagrangian subproblem \eqref{ALM-subprob}}}
 \textbf{Initialization:} Choose the initial point $(\widetilde{\xi}^1,\widetilde{\zeta}^1)=(\xi^j,\zeta^j)$
        and let $t_1=1$. Set $k:=1$.\\
 \textbf{while} the stopping conditions are not satisfied \textbf{do}
 \begin{enumerate}
  \item  Compute the following minimization problems
         \begin{subnumcases}{}\label{ABCD-eta}
                \eta^{k,j}=\mathop{\arg\min}_{\eta\in\mathbb{R}^p}L_{\sigma_{\!j}}(\eta,\widetilde{\xi}^k,\widetilde{\zeta}^k;x^j),\\
                \label{ABCD-xizeta}
                (\xi^{k,j},\zeta^{k,j})=\mathop{\arg\min}_{\xi\in\mathbb{R}^{n},\zeta\in\mathbb{R}^{p}}L_{\sigma_{\!j}}(\eta^{k,j},\xi,\zeta;x^j).
         \end{subnumcases}
  \item Set~$t_{k+1}=\frac{1+\sqrt{1+4t_k^2}}{2}$ and $\beta_k=\frac{t_{k}-1}{t_{k+1}}$, and then compute
        \[
          \widetilde{\xi}^{k+1}=\xi^{k,j}+\beta_k(\xi^{k,j}-\xi^{k-1,j})\ \ {\rm and}\ \
         \widetilde{\zeta}^{k+1}=\zeta^{k,j}+\beta_k(\zeta^{k,j}-\zeta^{k-1,j}).
        \]

  \item Let $k\leftarrow k+1$, and go to Step 1.
\end{enumerate}
\textbf{end while}
\end{algorithm}

 Let ${\rm prox}_{\ell_1,\gamma}\!:\mathbb{R}^p\to\mathbb{R}^p$ denote the proximal
 mapping of $\ell_1$-norm of parameter $\gamma$, i.e.,
 \[
   {\rm prox}_{\ell_1,\gamma}(z):=\min_{x'\in\mathbb{R}^p}\Big\{\frac{1}{2}\|x'-z\|^2+\gamma\|x'\|_1\Big\}.
 \]
 From the definition of the augmented Lagrangian function, the solution $\eta^{k,j}$ has the form
  \[
    \eta^{k,j}={\rm prox}_{\ell_1,{R}/{\sigma_{\!j}}}
    \big(\widetilde{\zeta}^k-\!A^{\mathbb{T}}\widetilde{\xi}^k-{x^j}/{\sigma_{\!j}}\big).
  \]
  Let
  \(
    \Phi_{k,j}(\xi):=\min_{\zeta\in\mathbb{R}^p}L_{\sigma_{\!j}}(\eta^{k,j},\xi,\zeta;x^j)
  \)
  for $\xi\in\mathbb{R}^n$. It is not difficult to verify that
  \[
    \xi^{k,j}=\mathop{\arg\min}_{\xi\in\mathbb{R}^n}\Phi_{k,j}(\xi)\ \ {\rm and}\ \
    \zeta^{k,j}=\Pi_{\Lambda}\big(A^{\mathbb{T}}\xi^{k,j}\!+\!\eta^{k,j}\!+x^j/\sigma_{\!j}\big).
  \]
  After an elementary calculation, one may obtain the expression of $\Phi_{k,j}$ as follows
  \[
    \Phi_{k,j}(\xi)=\frac{\sigma_{\!j}}{2}
    \Big\|\Pi_{\Lambda}\Big(A^{\mathbb{T}}\xi\!+\!\eta^{k,j}\!+\!\frac{x^j}{\sigma_{\!j}}\Big)
     \!-\!\Big(A^{\mathbb{T}}\xi\!+\!\eta^{k,j}\!+\!\frac{x^j}{\sigma_{\!j}}\Big)\Big\|^2
     \!+\!\frac{1}{2}\|\xi\|^2\!+\!\langle b,\xi\rangle+R\|\eta^{k,j}\|_1.
  \]
  By the strong convexity of $\Phi_{k,j}$, $\xi^{k,j}=\mathop{\arg\min}_{\xi\in\mathbb{R}^n}\Phi_{k,j}(\xi)$
  iff $\xi^{k,j}$ satisfies the system
  \begin{equation}\label{nonsmooth-system}
    \nabla\Phi_{k,j}(\xi)=b+\xi+\sigma_{\!j} A\left[\!\Big(A^{\mathbb{T}}\xi\!+\!\eta^{k,j}\!+\!\frac{x^j}{\sigma_{\!j}}\Big)
    -\Pi_{\Lambda}\Big(A^{\mathbb{T}}\xi\!+\!\eta^{k,j}\!+\!\frac{x^j}{\sigma_{\!j}}\Big)\right]=0.
  \end{equation}
  The system \eqref{nonsmooth-system} is strongly semismooth (see \cite{Mifflin77,QiSun93,SunSun02}
  for the related discussion), and we apply the semismooth Newton method for solving it.
  Write $y\!:=A^{\mathbb{T}}\xi+\eta^{k,j}+\frac{x^j}{\sigma_{\!j}}$.
  By \cite[Proposition 2.3.3 \& Theorem 2.6.6]{Clarke83},
  the Clarke Jacobian $\partial\nabla\Phi_{k,j}$ of $\Phi_{k,j}$ satisfies
  \begin{equation}\label{inclusion}
    \partial(\nabla\Phi_{k,j})(\xi)\subseteq \widehat{\partial}^2\Phi_{k,j}(\xi)
    := I + \sigma_jA(I-\partial\Pi_{\Lambda}(y))A^{\mathbb{T}}
  \end{equation}
  where $\widehat{\partial}^2\Phi_{k,j}$ is the generalized Hessian of $\Phi_{k,j}$ at $\xi$.
  Since the exact characterization of $\partial\nabla\Phi_{k,j}$ is difficult to obtain,
  we replace $\partial\nabla\Phi_{k,j}$ with $\widehat{\partial}^2\Phi_{k,j}$ in
  the solution of \eqref{nonsmooth-system}. Let

  \medskip
  \noindent
  $W\in\partial\Pi_{\Lambda}(y)$. By \cite[Theorem 2.6.6]{Clarke83}, we know that
  $W={\rm Diag}(W_{\!_{\mathcal{J}_1}},\ldots, W_{\!_{\mathcal{J}_m}})$
  is a block diagonal matrix with the $i$th block
  $W_{\!_{\mathcal{J}_i}}\!\in\partial\Pi_{\Lambda_i}(y_{\!_{\mathcal{J}_i}})$,
  where $\partial\Pi_{\Lambda_i}(y_{\!_{\mathcal{J}_i}})$ takes the form of
  \begin{equation}
  \partial\Pi_{\Lambda_i}(y_{\!_{\mathcal{J}_i}})
  =\begin{cases}\label{CJPI}
    \qquad\qquad \{I\} &\text{if}~\|y_{\!_{\mathcal{J}_i}}\|<\omega_i,\\
   {\rm conv}\big(I,I-\frac{1}{\omega_i^2}y_{\!_{\mathcal{J}_i}}y_{\!_{\mathcal{J}_i}}^{\mathbb{T}}\big)
   &\text{if}~\|y_{\!_{\mathcal{J}_i}}\|=\omega_i,\\
   \Big\{\omega_i\Big(\frac{1}{\|y_{\!_{\mathcal{J}_i}}\|}I-\frac{1}{\|y_{\!_{\mathcal{J}_i}}\|^3}
   y_{\!_{\mathcal{J}_i}}y_{\!_{\mathcal{J}_i}}^{\mathbb{T}}\Big)\Big\}
   &\text{if}~\|y_{\!_{\mathcal{J}_i}}\|>\omega_i.
  \end{cases}
 \end{equation}
  Here, ${\rm conv}\big(I,I-\frac{1}{\omega_i^2}y_{\!_{\mathcal{J}_i}}y_{\!_{\mathcal{J}_i}}^{\mathbb{T}}\big)$
  means the convex combination of $I$ and $I-\frac{1}{\omega_i^2}y_{\!_{\mathcal{J}_i}}y_{\!_{\mathcal{J}_i}}^{\mathbb{T}}$.
  From \eqref{inclusion} and \eqref{CJPI}, each element
  $I+\sigma_jA(I\!-\!W)A^{\mathbb{T}}$ in $\widehat{\partial}^2\Phi_{k,j}(\xi)$
  is positive definite, which by \cite{QiSun93} implies that
  the following semismooth Newton method has a fast convergence rate.
 \begin{algorithm}[!h]
 \caption{\label{SNCG}{\bf\ \ A semismooth Newton-CG (SNCG) algorithm for \eqref{nonsmooth-system}}}
 \textbf{Initialization:} Choose $\overline{\theta}\!\in(0,1),\tau\in(0,1),\delta\in(0,1),\mu\in\!(0,\frac{1}{2})$
               and $\xi^0\in\!\mathbb{R}^n$. Set $l=0$.\\
 \textbf{while} the stopping conditions are not satisfied \textbf{do}
 \begin{enumerate}
  \item  Choose a matrix $V^l\in\widehat{\partial}^2\Phi_{k,j}(\xi^l)$.
               Solve the following linear system
               \begin{equation*}\label{SNCG-dj}
                 V^ld=-\nabla\Phi_{k,j}(\xi^l)
              \end{equation*}
              with the conjugate gradient (CG) algorithm to find $d^l$ such that
              \[
                \|V^ld^l+\nabla\Phi_{k,j}(\xi^l)\|\le\min(\overline{\theta},\|\nabla\Phi_{k,j}(\xi^l)\|^{1+\tau})
              \]
  \item Set $\alpha_l=\delta^{m_l}$, where $m_l$ is the first nonnegative integer $m$ for which
               \begin{equation*}
                \Phi_{k,j}(\xi^l+\delta^md^l)\leq\Phi_{k,j}(\xi^l)+\mu\delta^m\langle\nabla\Phi_{k,j}(\xi^l),d^l\rangle.
               \end{equation*}

  \item Set $\xi^{l+1}=\xi^l+\alpha_ld^l$ and $l\leftarrow l+1$, and then go to Step 1.
\end{enumerate}
\textbf{end while}
\end{algorithm}	

  \medskip

  During the implementation of the semismooth Newton ALM for \eqref{Edprob-sub},
  we terminated the algorithm once
  \(
    \max\{\varepsilon_{{\rm pinf}}^j,\varepsilon_{{\rm dinf}}^j,\varepsilon_{{\rm gap}}^j\}\le \epsilon^j,
  \)
  where $\varepsilon_{{\rm gap}}^j$ is the primal-dual gap, i.e., the sum of the objective values
  of \eqref{Eprob-sub} and \eqref{Edprob-sub} at $(\eta^{j},\xi^{j},\zeta^{j})$,
  and $\varepsilon_{{\rm pinf}}^j$ and $\varepsilon_{{\rm dinf}}^j$ are the primal and dual infeasibility
  measure at $(\eta^{j},\xi^{j},\zeta^{j})$, respectively, defined as follows
  \[
    \varepsilon_{{\rm pinf}}^j
    :=\frac{\sigma_{\!j-1}\|(\zeta^{k,j}-\widetilde{\zeta}^k)+A^{\mathbb{T}}(\widetilde{\xi}^{k}-\xi^{k,j})\|}{1+\|b\|}
    \ \ {\rm and}\ \
    \varepsilon_{{\rm dinf}}^{j}:=\frac{\|x^{j}-x^{j-1}\|}{\sigma_{\!j-1}}.
  \]

  Now we return to the choice of parameters in the GEP-MSCRA.
  Taking into account the choice of $\rho$ in the first stage may not be the best,
  we use a dynamic adjustment for $\rho$ during the test. Specifically,
  we choose $\rho^1=\!\frac{2}{\|\mathcal{G}(x^{1})\|_{\infty}}$ and increase it
  by the rule $\rho^k = \min(2\rho^{k-1},10^8/\|\mathcal{G}(x^k)\|_\infty)$ for $k\ge 2$.
  The choice of $\nu$ is specified in the experiments. By Remark \ref{remark-alg}(c),
  we terminate the GEP-MSCRA at the iterate $x^k$ whenever it satisfies
  \[
    \big\langle e-w^{k-1},\mathcal{G}(x^k)\big\rangle\le \epsilon_{\rm gap}\ \ {\rm or}\ \
   \frac{|f(x^k)-f(x^{k-1})|}{\max(1,f(x^k))}\le\epsilon_{\rm loss},\,
   |\|x^k\|_{a,0}-\|x^{k-1}\|_{a,0}|\le 1
  \]
  where
  \(
    \|z\|_{a,0}:=\sum_{i=1}^m\!\mathbb{I}_{\{i:\,\|z_{\!_{\mathcal{J}_i}}\|>10^{-6}\}}(z)
  \)
  means the approximate group zero-norm of $z$.
  During the testing, we choose $\epsilon_{\rm gap}=10^{-6}$ and $\epsilon_{\rm loss}=10^{-2}$,
  and solve the subproblem \eqref{expm-subx} by Algorithm \ref{SNAL}
  with the tolerance $\epsilon^j=\max(10^{-5},0.8\epsilon^{j-1})$
  and $\epsilon^{0}=0.1\epsilon_{\rm loss}$.
  All numerical results of this section are obtained from a laptop running
  on 64-bit Windows Operating System with an Intel(R) Core(TM) i7-7700 CPU 2.8GHz
  and 16 GB memory.

 \subsection{Numerical experiments for group sparse regressions}\label{sec5.2}

  We shall evaluate the performance of the GEP-MSCRA in the group sparse regression
  setting by using the simulated data. We generate the simulation data with the sample size $n$,
  the dimension of variables $p$, the number of groups $m$,
  and the dimension of each group $d=\lceil p/m\rceil$. The matrix $A$ is generated randomly
  by one of the following ways:
   \begin{itemize}
   \item[(I)] $A=\textbf{randn}(n,p)$;

   \item[(II)] $ A = \textbf{sign}(\textbf{rand}([n,p]) - 0.5)$;  $\textrm{ind} =\textrm{find}(A =0)$;
               $ A(\textrm{ind}) = \textrm{ones}(\textrm{size}(\textrm{ind}))$;

  \item[(III)] $A\!=\textbf{hadamard}(n)$; $\textrm{picks}=\textrm{randperm}(n)$;
                $\textrm{picks}=\!\textrm{sort}(\textrm{picks}(1\!:\!n))$;
                $A\!=\!A(\textrm{picks},\!:)$.
  \end{itemize}
  We select $\overline{r}$ groups randomly from $m$ groups, say $\{m_1,\ldots,m_{\overline{r}}\}$,
  as the support of $\overline{x}$, and generate the entries of
  $\overline{x}_{\!_{\mathcal{J}_{i}}}$ for $i\in\{m_1,\ldots,m_{\overline{r}}\}$
  in one of the following seven ways:
  \begin{itemize}
   \item[(i)] $\overline{x}_{\!_{\mathcal{J}_{i}}}=\alpha\,\textbf{randn}(|\mathcal{J}_{i}|,1)$
              for $i\in\{m_1,\ldots,m_{\overline{r}}\}$\ \ {\rm with}\ $\alpha=2$ or $10^5$;

   \item[(ii)] $\overline{x}_{\!_{\mathcal{J}_{i}}}=\alpha\,\textbf{rand}(|\mathcal{J}_{i}|,1)-0.5$
                for $i\in\{m_1,\ldots,m_{\overline{r}}\}$\ \ {\rm with}\ $\alpha=2$ or $10^5$;

  \item[(iii)] $\overline{x}_{\!_{\mathcal{J}_{i}}}=\alpha\,\textbf{sign}(\textbf{randn}(|\mathcal{J}_{i}|,1))$
               for $i\in\{m_1,\ldots,m_{\overline{r}}\}$\ \ {\rm with}\ $\alpha=1$ or $10^5$;

  \item[(iv)] $\overline{x}_{\!_{\mathcal{J}_{i}}}=-\frac{10^5}{\sqrt{i}}e$ for
              $i\in\{m_1,\ldots,m_{\overline{r}/2}\}$ and $\overline{x}_{\!_{\mathcal{J}_{i}}}=\frac{10^5}{\sqrt{i}}e$
              for $i\in\{m_{(\overline{r}+1)/2},\ldots,m_{\overline{r}}\}$.
  \end{itemize}
  Then, we set $b=A(\overline{x}+\vartheta_1\frac{\widetilde{\varepsilon}}{\|\widetilde{\varepsilon}\|})
  +\vartheta_2\frac{\varepsilon}{\|\varepsilon\|}$
  where $\widetilde{\varepsilon}=\textbf{randn}(p,1)$, $\varepsilon=\textbf{randn}(p,1)$
  and $\vartheta_1$ and $\vartheta_2$ are the nonnegative constants representing the scale
  of the noise vectors $\varepsilon$ and $\widetilde{\varepsilon}$.
  Since the true $\overline{x}$ is known for these synthetic problems,
  we take $R=1000\|\overline{x}\|_\infty$ for the set $\Omega$.
  We find from experiments that Algorithm \ref{SNAL}
  is not sensitive to the value of $R$.

  \subsubsection{Performance of the GEP-MSCRA with different $\phi$}\label{subsub5.1.1}

  This part aims to evaluate the performance of the GEP-MSCRA with
  $\phi\in\{\phi_1,\phi_2,\phi_3,\phi_4\}$ where $a=3.7$ and $a=3$
  are used for $\phi_1$ and $\phi_2$ respectively, and $\epsilon=10^{-2}$
  is used for $\phi_4$. With the design matrix $A\in\mathbb{R}^{n\times p}$
  of type I for $(p,m,\overline{r})=(2^{12},256,10)$,
  we generate $10$ test problems randomly as above for every type of $\overline{x}$
  with $(\vartheta_1,\vartheta_2)=(0.1,0.1)$, and apply the GEP-MSCRA for
  solving the test problems with $\lambda=({0.1}/{n})\|A^{\mathbb{T}}b\|_\infty$.
  Figure \ref{fig1} plots the average relative
  prediction error curve and the average computing time curve, respectively,
  yielded by the GEP-MSCRA with each $\phi$ under the sample size
  $n=\lfloor\frac{p}{\beta}\rfloor$ for $\beta\in\{5,6,\ldots,17\}$.
  Here, for each sample size, the average relative error and computing time
  is the average of the total relative prediction error and computing time
  of the $70$ test problems. The relative error is defined by
  \(
    \textbf{relerr}:=\frac{\|x^{\rm out}-\overline{x}\|}{\|\overline{x}\|}
  \)
  where $x^{\rm out}$ is the output.

\begin{figure}[ht]
\setlength{\abovecaptionskip}{1pt}
\begin{center}
\includegraphics[width=14cm,height=5.0cm]{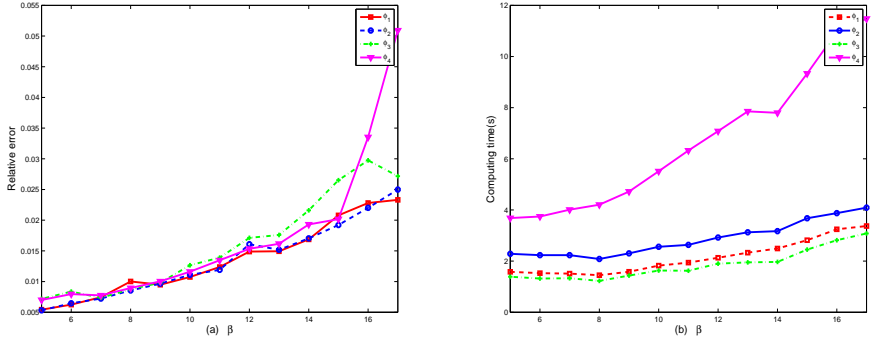}
\end{center}
\caption{\small Performance of the GEP-MSCRA with $\phi_1$-$\phi_4$ under different sample size}\label{fig1}
\end{figure}

 \medskip

 Figure \ref{fig1} shows that the relative errors yielded with $\phi_1$-$\phi_4$
 are comparable, but those yielded with $\phi_3$ and $\phi_4$ have a little
 bigger fluctuation. In addition, the GEP-MSCRA with $\phi_2$ and $\phi_4$
 requires more computing time than the GEP-MSCRA with $\phi_1$ and $\phi_3$ does.
 By this, we choose the GEP-MSCRA with $\phi_1$ for the subsequent experiments.

  \subsubsection{Numerical comparison with the SLEP}\label{subsub5.1.1}

  The SLEP is a solver to the subproblem \eqref{expm-subx}
  without the constraint $x\in\Omega$ but with positive
  weights. So, we first compare the performance of
  Algorithm \ref{SNAL} for solving the subproblem \eqref{expm-subx}
  for $k=1$ and $w^0=0$ with that of the SLEP for solving its counterpart without
  the constraint $x\in\Omega$ under different $\lambda$. Unless otherwise stated,
  all parameters involved in the SELP are set to be the default one.
  We generate $10$ test problems randomly as above for every type of $\overline{x}$
  with $(\vartheta_1,\vartheta_2)=(0.1,0.1)$ and the design matrix
  $A\in\mathbb{R}^{n\times p}$ of type I for $(p,m,\kappa)=(2^{12},256,15)$
  and $n=\lfloor{p}/{10}\rfloor$. Figure \ref{fig2} plots the average relative error
  and computing time curves of Algorithm \ref{SNAL} and the SLEP for solving
  the $70$ problems with $\lambda=(\beta/n)\|A^{\mathbb{T}}b\|_\infty$.
  We see that the relative error yielded by Algorithm \ref{SNAL}
  has less variation than the one yielded by the SLEP when
  $\lambda\in[0.03/n,0.3/n]\|A^{\mathbb{T}}b\|_\infty$, which means that
  it is easier to choose an appropriate $\lambda$ for Algorithm \ref{SNAL}.
  Since the problem \eqref{expm-subx} is more difficult than its
  unconstrained counterpart, Algorithm \ref{SNAL} requires more
  time than the SLEP does, but its computing time decreases as
  $\lambda$ increases, and when $\lambda\ge (0.2/n)\|A^{\mathbb{T}}b\|_\infty$
  its time is less than three times that of the SLEP.

\begin{figure}[ht]
\setlength{\abovecaptionskip}{1pt}
\begin{center}
\includegraphics[width=14cm,height=5.0cm]{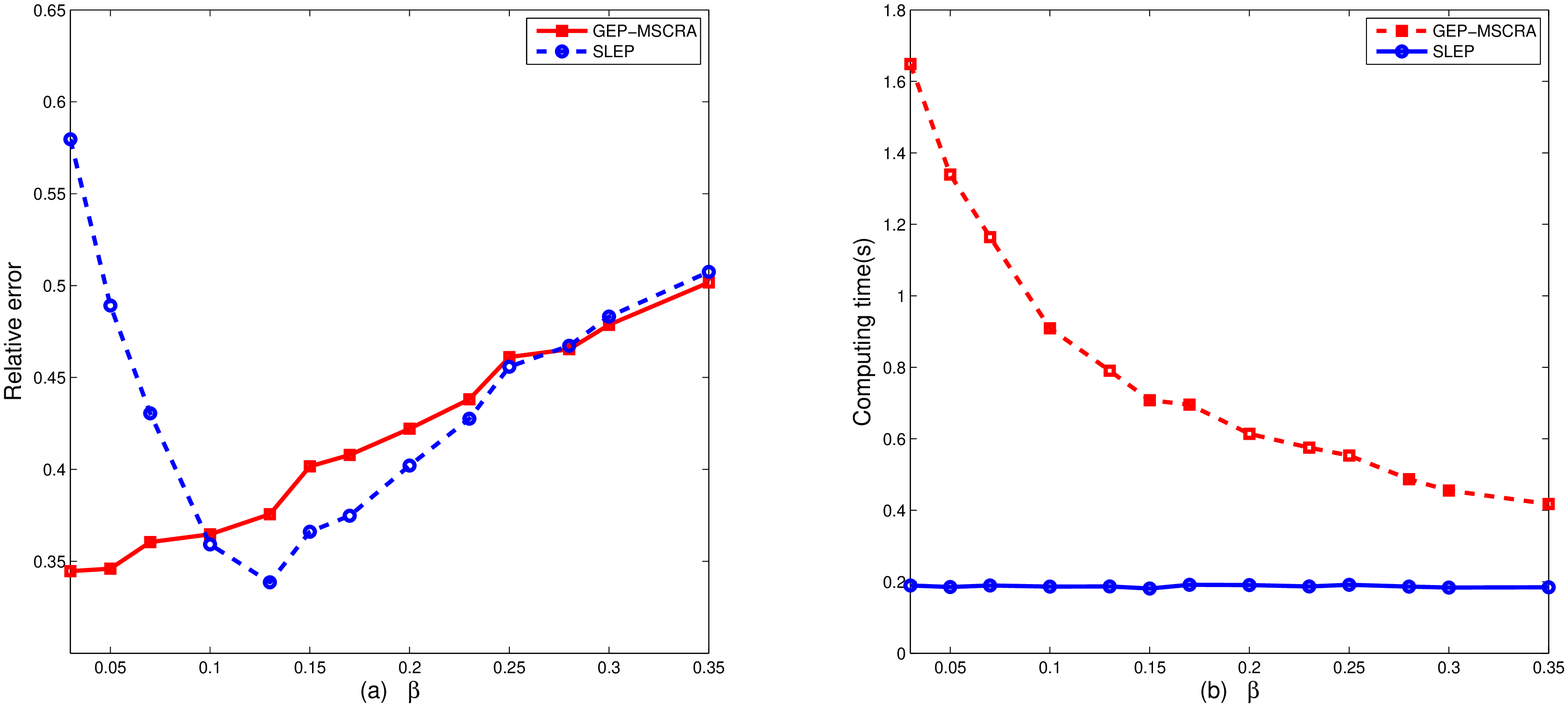}
\end{center}
\caption{\small Performance of Algorithm \ref{SNAL} and the SLEP under different $\lambda=(\beta/n)\|A^{\mathbb{T}}b\|_\infty$}\label{fig2}
\end{figure}

  \medskip

  Next we compare the performance of the GEP-MSCRA for computing $\widehat{x}$
  with that of the SLEP for computing the $\ell_{2,1}$-norm regularized LS estimator,
  i.e., the one defined by the subproblem \eqref{expm-subx} with $k=1$ and $w^0=0$
  but without the constraint $x\in\Omega$. To this end, for each type of $A$ with
  $(p,m,\kappa)=(2^{12},256,15)$, we generate $10$ test problems randomly
  for every type of $\overline{x}$ with $(\vartheta_1,\vartheta_2)=(0.1,0.3)$,
  and then apply the GEP-MSCRA and the SLEP, respectively, for solving
  the corresponding test problems. By Figure \ref{fig2},
  we choose $\lambda=(0.1/n)\|A^{\mathbb{T}}b\|_\infty$ for the GEP-MSCRA
  and $\lambda=(0.13/n)\|A^{\mathbb{T}}b\|_\infty$ for the SLEP.
  Figure \ref{fig3} plots the average relative error and computing time
  curves under different sample size $n=\lfloor\frac{p}{\beta}\rfloor$
  for $\beta\in\{3,4,\ldots,15\}$. From Figure \ref{fig3}, we see that
  although the SLEP is faster than the GEP-MSCRA, for the matrix $A$ of
  type I and II, the relative error of its output is about \textbf{six}
  or \textbf{seven} times higher than that of the GEP-MSCRA,
  and for the matrix $A$ of type III, the relative error of its output is
  about \textbf{one and half} times higher than that of the GEP-MSCRA.
  In addition, Figure \ref{fig4} shows under each sample size, the group
  sparsity of the output yielded by the SLEP is much higher than that of
  $\overline{x}$ when the sample size becomes less, but that of the output
  yielded by the GEP-MSCRA is close to that of the true $\overline{x}$.
  This means that the estimator yielded by the GEP-MSCRA is much better
  than the one yielded by the SLEP in terms of the relative error and
  the group sparsity. Notice that the matrix $A$ of type I and II
  satisfies the RSC condition in a high probability.
  Thus, the numerical performance matches the theoretical analysis well.

 \begin{figure}[ht]
\setlength{\abovecaptionskip}{1pt}
\begin{center}
\includegraphics[width=14cm,height=8.5cm]{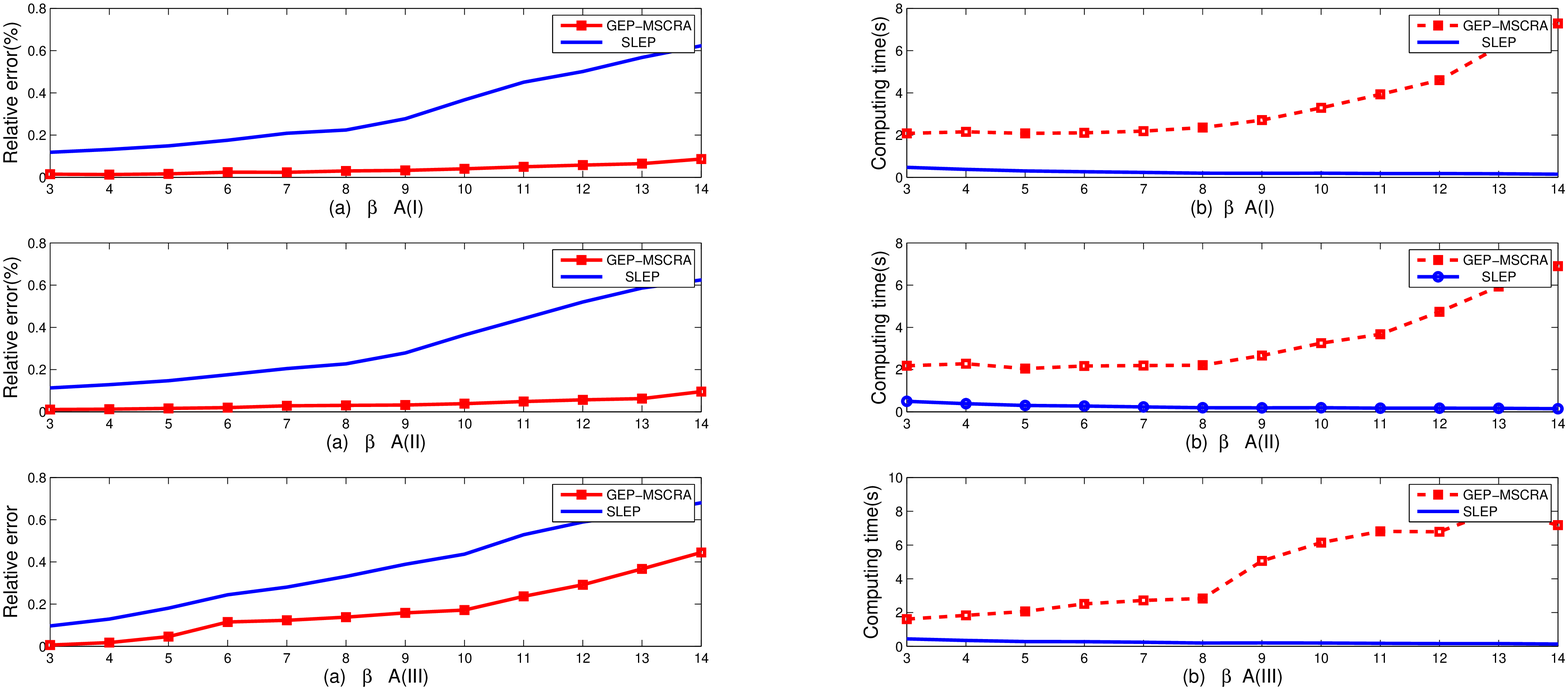}
\end{center}
\caption{\small Relative error of the output yielded by the GEP-MSCRA and the SLEP}\label{fig3}
\end{figure}


\begin{figure}[ht]
\setlength{\abovecaptionskip}{1pt}
\begin{center}
\includegraphics[width=14cm,height=5cm]{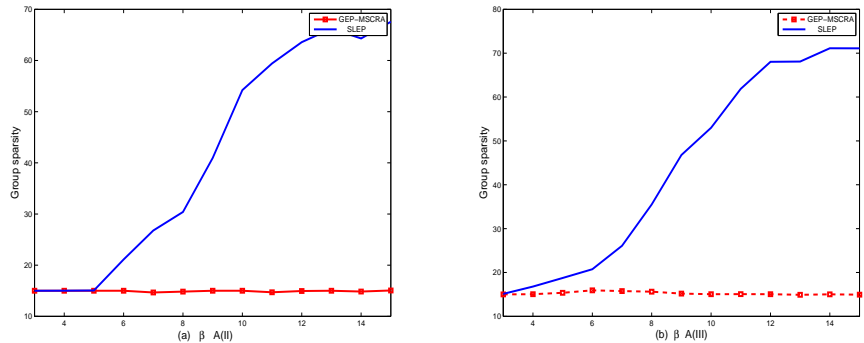}
\end{center}
\caption{\small Group sparsity of the output yielded by the GEP-MSCRA and the SLEP}\label{fig4}
\end{figure}
 \subsection{Numerical experiments for multi-task learning}\label{sec5.2}

  In multi-task learning (see \cite{Argyriou-ML,Obozinski10,ZhangGY08}),
  we are given a training set of $m$ tasks $\{(a_i^k,y_i^k)\}_{i=1}^{n_k}$
  from the linear models $h_k(a)=\langle w_{\!_{\mathcal{J}_k}},a\rangle$
  for $k=1,\ldots,m$, where $w_{\!_{\mathcal{J}_k}}\in\mathbb{R}^{|\mathcal{J}_k|}$
  is the weight vector for the $k$th task, $a_i^k\in\mathbb{R}^{|\mathcal{J}_k|}$
  is the $i$th training sample for the $k$th task, $y_i^k$ is the corresponding output,
  and $n_k$ is the number of training samples for the $k$th task.
  Write $y_k=[y_1^k,\ldots,y_{n_k}^k]^{\mathbb{T}}\in\mathbb{R}^{n_k}$
  and $b=[y_1^{\mathbb{T}},\ldots,y_m^{\mathbb{T}}]^{\mathbb{T}}\in\mathbb{R}^n$
  with $n=\sum_{j=1}^mn_j$. Let $A_{\!_{\mathcal{J}_k}}=[a_1^k,\ldots,a_{n_k}^k]^{\mathbb{T}}\in\mathbb{R}^{n_k\times|\mathcal{J}_k|}$
  denote the data matrix for the $k$th task. Clearly, the model \eqref{model}
  is also applicable to the multi-task learning by replacing $\overline{x}$ with
  $w=[w_{\!_{\mathcal{J}_1}}^{\mathbb{T}},\ldots,w_{\!_{\mathcal{J}_m}}^{\mathbb{T}}]^{\mathbb{T}}$.

  \medskip

 This part focuses on the comparison of the GEP-MSCRA and
 the MALSAR\footnote{\url{http://yelab.net/software/MALSRA (version 1.1)/}}
 for a real data set (School data) from the Inner London Education
 Authority\footnote{Available at \url{http://www.mlwin.com/intro/datasets.html}}.
 Among others, the MALSAR is a solver for the unconstrained $\ell_{2,1}$-regularized
 LS model, and since the true $\overline{x}$ is unknown for this real problem,
 we take $R=2000$ for the set $\Omega$. This data set has been used
 in previous works on multi-task learning (see \cite{Evgeniou-ML}).
 It consists of examination scores of 15362 students from 139 secondary schools
 in London during the years 1985, 1986 and 1987. There are 139 tasks, corresponding to
 predicting student performance in each school. The input consists of the year of
 the examination (YR), 4 school-specific and 3 student-specific attributes,
 and each sample contains 28 attributes.

 \medskip

 We first test the prediction performance of the GEP-MSCRA and the MALSAR
 with different $\lambda=\nu^{-1}$. We generate the training and
 test sets by 10 random splits of the data, so that ${\bf 75\%}$ of the examples from
 each school (task) belong to the training set and ${\bf 25\%}$ to the test set.
 The subfigures in the first line of Figure \ref{fig5} plot the prediction error
 and time curves of two solvers with $\lambda=(0.001\beta/n)\|A^{\mathbb{T}}b\|_\infty$,
 where the solvers use the solution associated to the current $\lambda$ as
 the initial point for solving the problem associated to the next $\lambda$,
 and the subfigures in the second line are plotted by the solutions
 yielded by the GEP-MSCRA with the initial $x^0=0$ and
 the MALSAR with the default one.

\begin{figure}[ht]
\setlength{\abovecaptionskip}{1pt}
\begin{center}
\includegraphics[width=14cm,height=7cm]{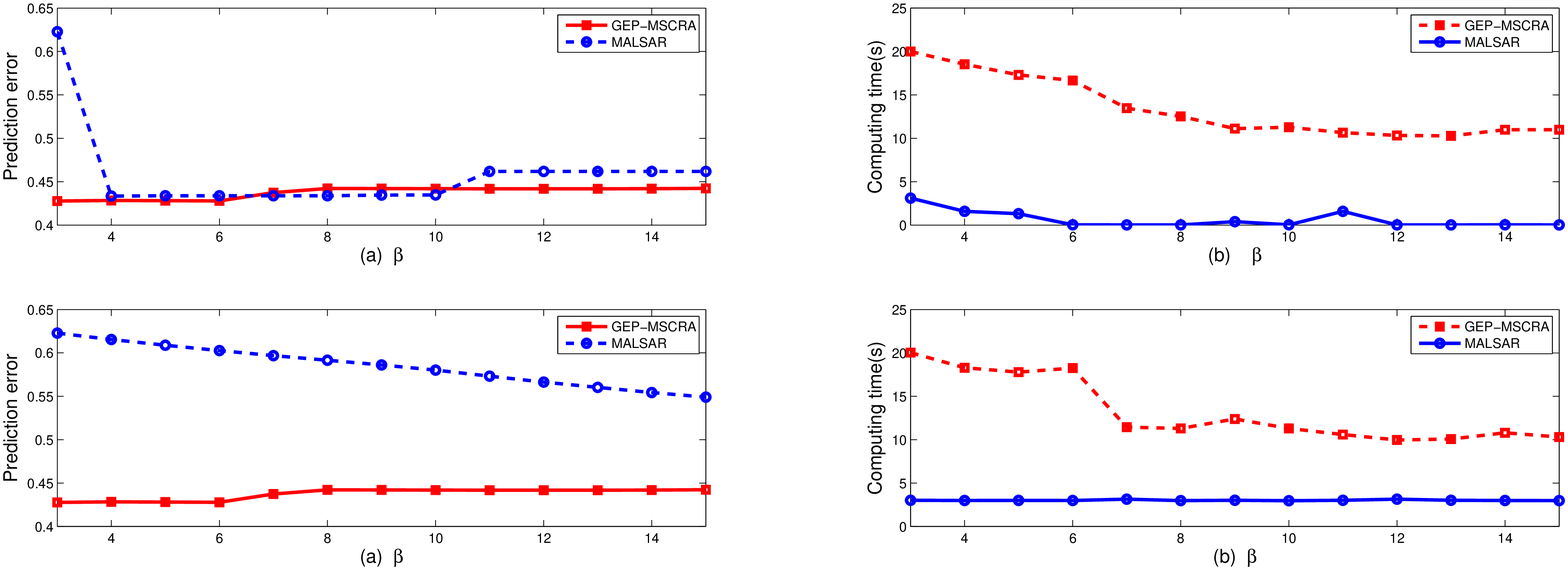}
\end{center}
\caption{\small Prediction errors yielded by the GEP-MSCRA and the SLEP under different $\lambda$}\label{fig5}
\end{figure}

 \medskip

 Figure \ref{fig5} shows that the performance of the GEP-MASCRA does not
 depend on the initial point, but that of the MALSAR
 improves much if the solution corresponding to the current $\lambda$ is used as
 the starting point for solving the problem associated to the next $\lambda$.
 The prediction error of the GEP-MASCRA is at least ${\bf 20\%}$ lower than
 that of the MALSAR when the latter does not use the solution corresponding to
 the current $\lambda$ as the starting point, and is comparable even superior to
 that of the MALSAR even if it uses the solution associated to the current
 $\lambda$ as the starting point. Also, from the left subfigure
 in Figure \ref{fig6}, the GEP-MSCRA yields the solution with better group
 sparsity than the MALSAR does; and from the right subfigure,
 the MALSAR does not yield a group sparse solution without using the solution
 associated to the current $\lambda$ as the next starting point,
 but the GEP-MSCRA yields the solution with desirable group sparsity.

%

 \begin{figure}[ht]
\setlength{\abovecaptionskip}{1pt}
\begin{center}
\includegraphics[width=14cm,height=5.0cm]{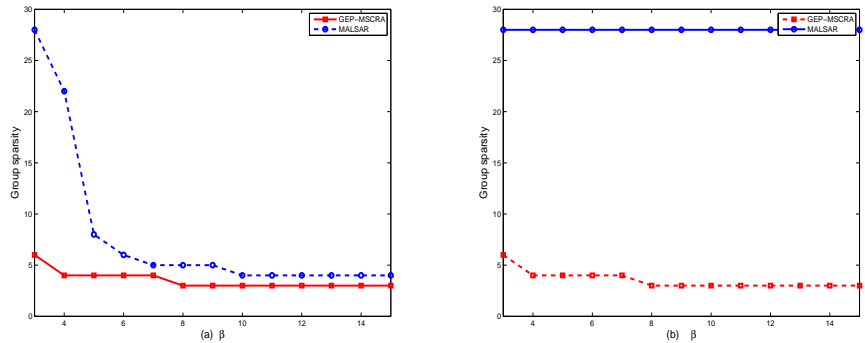}
\end{center}
\caption{\small Group sparsity yielded by the GEP-MSCRA and the SLEP under different $\lambda$}\label{fig6}
\end{figure}

 \medskip

 Next we test the prediction performance of the GEP-MSCRA and the MALSAR
 with different numbers of training samples. We generate the training and
 test sets by 10 random splits of the data so that $100\beta\%$
 of the examples from each school (task) belong to the training set
 and $100(1\!-\!\beta)\%$ to the test set. The subfigures in the first line
 of Figure \ref{fig7} plots the prediction error curves and the computing time curves
 with $\lambda=(0.005/n)\|A^{\mathbb{T}}b\|_\infty$, and the subfigures
 in the second line are plotted with $\lambda=(0.013/n)\|A^{\mathbb{T}}b\|_\infty$.
 We see that the prediction error of the GEP-MSCRA is decreasing as the number of
 training samples increases, but that of the MALSAR does not improve even
 increases as the number of training samples increases. Moreover,
 the prediction error of the GEP-MSCRA is at least lower than ${\bf 20\%}$
 that of the MALSAR when $50\%$ of the examples are used as the training set,
 and the prediction error of the former is lower than ${\bf 5\%}$ that of
 the latter when only $35\%$ of the examples are used as the training set.
 From Figure \ref{fig8}, we see that under each kind of training samples,
 the GEP-MSCRA yields the group sparsity less than ${\bf 5}$,
 but the MALSAR does not yield group sparsity under the two $\lambda$.

 \begin{figure}[ht]
\setlength{\abovecaptionskip}{1pt}
\begin{center}
\includegraphics[width=14cm,height=7cm]{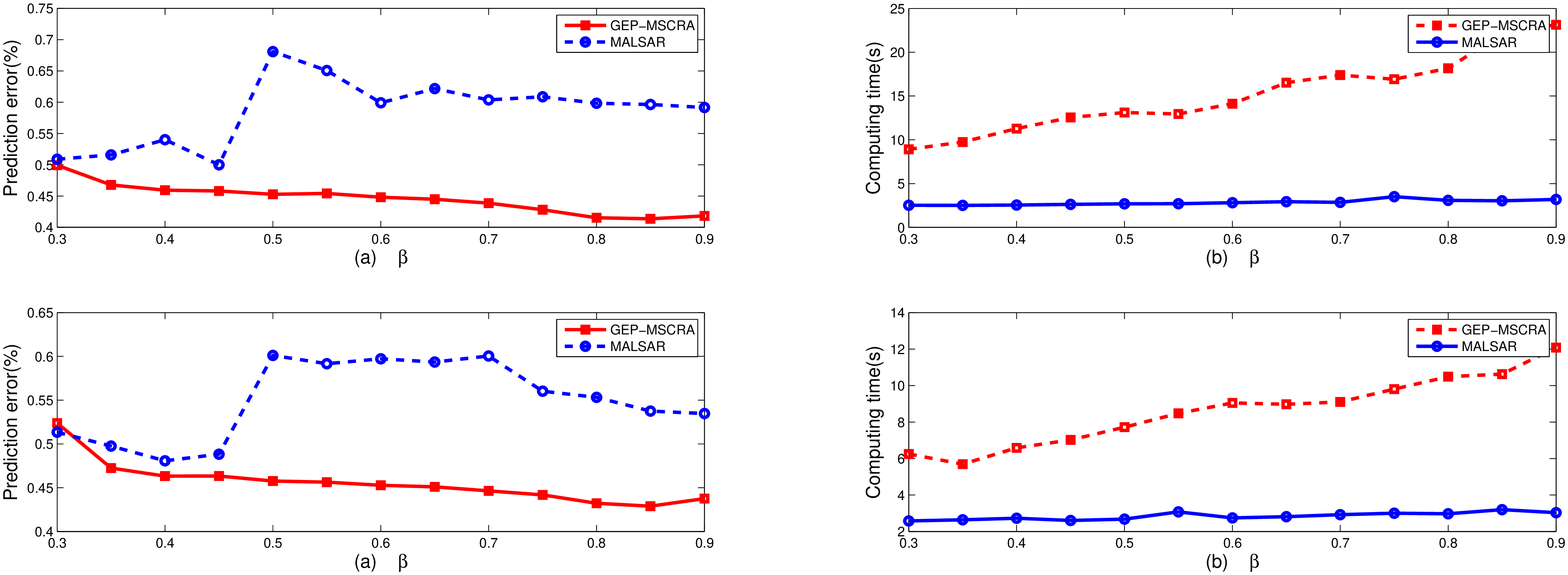}
\end{center}
\caption{\small Prediction errors of the GEP-MSCRA and the SLEP under different train samples}\label{fig7}
\end{figure}

 \begin{figure}[ht]
\setlength{\abovecaptionskip}{1pt}
\begin{center}
\includegraphics[width=14cm,height=5.0cm]{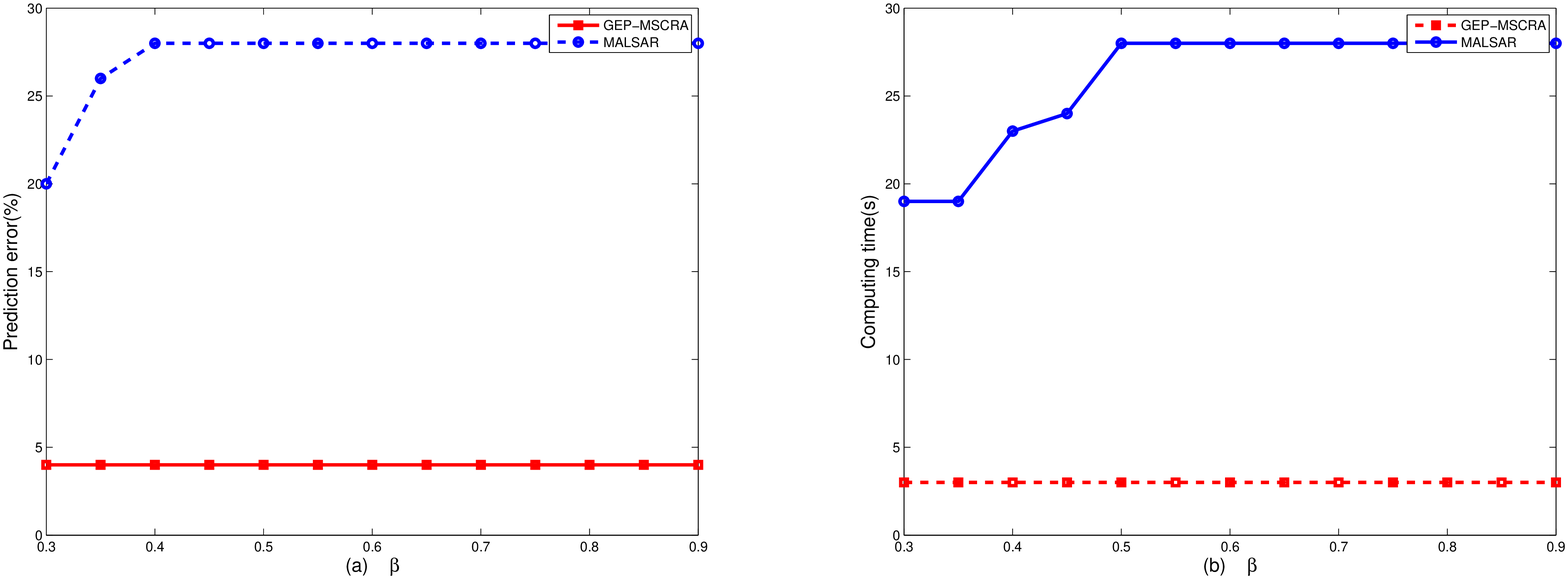}
\end{center}
\caption{\small Group sparsity of the GEP-MSCRA and the SLEP under different train samples}\label{fig8}
\end{figure}

 \section{Conclusions}\label{conclusion}

 In this paper we showed that the group zero-norm regularized least
 squares estimator can be obtained from an exact penalization problem
 by using the equivalent MPEC of \eqref{G-Sparse} and developing
 the global exact penalty for the MPEC, and found that the popular
 SCAD and MCP penalized estimators also arise from the global exact
 penalty framework. Based on the structure of the exact penalty problem,
 we proposed a primal-dual convex relaxation approach for computing
 this estimator. For the proposed GEP-MSCRA, we provided its statistical
 guarantees and confirmed its efficiency by making comparison with
 the SLEP and the MALSAR on synthetic group sparse regression problems
 and real multi-task learning problems. In our future work, we shall
 further study the global exact penalty results for the MPEC from statistical angle,
 and develop global exact penalty results for other statistical problems
 with a certain combinatorial property.

  \bigskip
 \noindent
 {\bf\large Appendix A}

 \medskip

  The following several lemmas provide some upper estimations for
  the noise vector $\varepsilon$. Among others, Lemma \ref{noise-result1}
  follows directly by using the same arguments as \cite[Lemma 5]{TZhang10},
  and Lemma \ref{noise-result2} and \ref{LS-noise} follow from
  the same arguments as those for \cite[Lemma 3]{TZhang11}.
 \begin{alemma}\label{noise-result1}
  Let $\|\mathcal{J}\|_\infty:=\max_{1\le i\le m}\{|\mathcal{J}_i|\}$.
  Then, under Assumption \ref{assump}, for any given $\eta\in(0,1)$
  the following inequality holds with probability at least $1-\eta$:
  \[
  \|\mathcal{G}(\widehat{\varepsilon})\|_\infty
   \le \frac{\sigma}{n}\sqrt{2\|\mathcal{J}\|_\infty\log({2p}/{\eta})}\|A\|_{2,\infty}.
  \]
 \end{alemma}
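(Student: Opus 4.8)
The plan is to reduce the group quantity $\|\mathcal{G}(\widehat{\varepsilon})\|_\infty=\max_{1\le i\le m}\|\widehat{\varepsilon}_{\!_{\mathcal{J}_i}}\|$ to a maximum of $p$ scalar sub-Gaussian tails and then apply a union bound, exactly in the spirit of \cite[Lemma 5]{TZhang10}. First I would observe that each coordinate $\widehat{\varepsilon}_j=\frac{1}{n}A_j^{\mathbb{T}}\varepsilon=\frac{1}{n}\sum_{l}A_{lj}\varepsilon_l$ is, by Assumption \ref{assump} and the independence of the $\varepsilon_l$, a centered sub-Gaussian variable: its moment generating function satisfies $\mathbb{E}[\exp(t\widehat{\varepsilon}_j)]=\prod_l\mathbb{E}[\exp((tA_{lj}/n)\varepsilon_l)]\le\exp\big(\frac{\sigma^2t^2\|A_j\|^2}{2n^2}\big)$, so $\widehat{\varepsilon}_j$ is sub-Gaussian with parameter at most $\sigma\|A_j\|/n\le\sigma\|A\|_{2,\infty}/n$, since $\|A\|_{2,\infty}$ is the maximum column norm of $A$. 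This is the only place where the independence (rather than identical distribution) of the noise components is invoked.

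Next I would pass from coordinates to groups via the elementary estimate $\|\widehat{\varepsilon}_{\!_{\mathcal{J}_i}}\|\le\sqrt{|\mathcal{J}_i|}\,\max_{j\in\mathcal{J}_i}|\widehat{\varepsilon}_j|\le\sqrt{\|\mathcal{J}\|_\infty}\,\max_{1\le j\le p}|\widehat{\varepsilon}_j|$, so that $\|\mathcal{G}(\widehat{\varepsilon})\|_\infty\le\sqrt{\|\mathcal{J}\|_\infty}\,\max_{1\le j\le p}|\widehat{\varepsilon}_j|$. Applying the standard sub-Gaussian tail bound $\mathbb{P}(|\widehat{\varepsilon}_j|\ge t)\le 2\exp\!\big(-n^2t^2/(2\sigma^2\|A\|_{2,\infty}^2)\big)$ together with a union bound over $j=1,\ldots,p$ gives $\mathbb{P}\big(\max_j|\widehat{\varepsilon}_j|\ge t\big)\le 2p\exp\!\big(-n^2t^2/(2\sigma^2\|A\|_{2,\infty}^2)\big)$. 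Setting the right-hand side equal to $\eta$ yields $t=\frac{\sigma\|A\|_{2,\infty}}{n}\sqrt{2\log(2p/\eta)}$, and combining with the group-to-coordinate inequality above produces precisely the claimed bound with probability at least $1-\eta$.

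There is essentially no serious obstacle; the argument is routine once the framework is set up. The only two points that require a little care are: (i) identifying the correct sub-Gaussian parameter $\sigma\|A_j\|/n$ of the linear form $\frac{1}{n}A_j^{\mathbb{T}}\varepsilon$ and bounding it by $\sigma\|A\|_{2,\infty}/n$, and (ii) choosing the coarse-but-tight bound $\|\widehat{\varepsilon}_{\!_{\mathcal{J}_i}}\|\le\sqrt{\|\mathcal{J}\|_\infty}\,\max_j|\widehat{\varepsilon}_j|$, which is what delivers the $\sqrt{\|\mathcal{J}\|_\infty}$ dependence appearing in the statement.
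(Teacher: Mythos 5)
Your argument is correct and is exactly the standard route the paper defers to (it gives no proof of its own, only the citation to \cite[Lemma 5]{TZhang10}): bound each coordinate $\widehat{\varepsilon}_j=\frac{1}{n}A_j^{\mathbb{T}}\varepsilon$ as sub-Gaussian with parameter $\sigma\|A_j\|/n\le\sigma\|A\|_{2,\infty}/n$, take a union bound over the $p$ coordinates, and pass to groups via $\|\widehat{\varepsilon}_{\!_{\mathcal{J}_i}}\|\le\sqrt{\|\mathcal{J}\|_\infty}\max_j|\widehat{\varepsilon}_j|$, which reproduces the stated constant exactly. No gaps.
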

 \begin{alemma}\label{noise-result2}
  Suppose that $A_{\mathcal {J}_{\overline{S}}}$ has full column rank.
  Then, under Assumption \ref{assump}, for any given $\eta\in(0,1)$
  the following inequality holds with probability (w.p.) at least $1-\eta$:
  \[
  \|\mathcal{G}(\widehat{\varepsilon}^\dag)\|_\infty
  \le\frac{\sigma}{\sigma_{\min}(A_{\mathcal {J}_{\overline{S}}})}
    \sqrt{2\|\mathcal{J}\|_\infty\log({2p}/{\eta})}.
  \]
 \end{alemma}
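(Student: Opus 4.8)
The plan is to reduce the group-norm bound to a coordinate-wise sub-Gaussian tail estimate and then finish by a union bound, exactly in the spirit of the proof of Lemma~\ref{noise-result1}. First I would note that since $A_{\mathcal{J}_{\overline{S}}}$ has full column rank, $(A_{\mathcal{J}_{\overline{S}}}^{\mathbb{T}}A_{\mathcal{J}_{\overline{S}}})^{-1}$ is well defined, and for each $j\in\mathcal{J}_{\overline{S}}$ I would write
\[
 (\widehat{\varepsilon}^\dag)_j=\big\langle v_j,\varepsilon\big\rangle,\qquad
 v_j:=A_{\mathcal{J}_{\overline{S}}}(A_{\mathcal{J}_{\overline{S}}}^{\mathbb{T}}A_{\mathcal{J}_{\overline{S}}})^{-1}e_j .
\]
A direct computation gives $\|v_j\|^2=e_j^{\mathbb{T}}(A_{\mathcal{J}_{\overline{S}}}^{\mathbb{T}}A_{\mathcal{J}_{\overline{S}}})^{-1}e_j\le\|(A_{\mathcal{J}_{\overline{S}}}^{\mathbb{T}}A_{\mathcal{J}_{\overline{S}}})^{-1}\|=\sigma_{\min}(A_{\mathcal{J}_{\overline{S}}})^{-2}$. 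Since the components of $\varepsilon$ are independent mean-zero sub-Gaussians with parameter $\sigma$ by Assumption~\ref{assump}, each $(\widehat{\varepsilon}^\dag)_j$ is sub-Gaussian with parameter at most $\sigma/\sigma_{\min}(A_{\mathcal{J}_{\overline{S}}})$, so that
\[
 \mathbb{P}\Big(|(\widehat{\varepsilon}^\dag)_j|\ge t\Big)\le 2\exp\!\Big(\!-\frac{\sigma_{\min}(A_{\mathcal{J}_{\overline{S}}})^2 t^2}{2\sigma^2}\Big)\quad\textrm{for all }t>0 .
\]

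Next I would pass from the per-coordinate estimate to the group norm via the elementary inequality, valid for every $i\in\overline{S}$,
\[
 \|(\widehat{\varepsilon}^\dag)_{\mathcal{J}_i}\|\le\sqrt{|\mathcal{J}_i|}\,\max_{j\in\mathcal{J}_i}|(\widehat{\varepsilon}^\dag)_j|\le\sqrt{\|\mathcal{J}\|_\infty}\,\max_{j\in\mathcal{J}_{\overline{S}}}|(\widehat{\varepsilon}^\dag)_j| ,
\]
so that $\|\mathcal{G}(\widehat{\varepsilon}^\dag)\|_\infty\le\sqrt{\|\mathcal{J}\|_\infty}\,\max_{j\in\mathcal{J}_{\overline{S}}}|(\widehat{\varepsilon}^\dag)_j|$. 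Taking a union bound over the at most $p$ indices $j\in\mathcal{J}_{\overline{S}}$ and choosing $t=\frac{\sigma}{\sigma_{\min}(A_{\mathcal{J}_{\overline{S}}})}\sqrt{2\log(2p/\eta)}$, so that $2p\exp(-\sigma_{\min}(A_{\mathcal{J}_{\overline{S}}})^2 t^2/(2\sigma^2))=\eta$, yields $\max_{j\in\mathcal{J}_{\overline{S}}}|(\widehat{\varepsilon}^\dag)_j|\le t$ with probability at least $1-\eta$; combining this with the previous display gives $\|\mathcal{G}(\widehat{\varepsilon}^\dag)\|_\infty\le\frac{\sigma}{\sigma_{\min}(A_{\mathcal{J}_{\overline{S}}})}\sqrt{2\|\mathcal{J}\|_\infty\log(2p/\eta)}$ with probability at least $1-\eta$, as claimed.

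There is no substantial obstacle: the argument parallels that of Lemma~\ref{noise-result1} and of \cite[Lemma~3]{TZhang11}, and the only points demanding a little care are the identification of the variance proxy of $(\widehat{\varepsilon}^\dag)_j$ through $\|(A_{\mathcal{J}_{\overline{S}}}^{\mathbb{T}}A_{\mathcal{J}_{\overline{S}}})^{-1}\|=\sigma_{\min}(A_{\mathcal{J}_{\overline{S}}})^{-2}$ and the deliberately loose replacement of $|\mathcal{J}_{\overline{S}}|$ by $p$ in the union bound, which is what keeps the conclusion in the same form as Lemma~\ref{noise-result1}.
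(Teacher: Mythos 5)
Your proof is correct and is essentially the argument the paper intends: the paper does not write out a proof for this lemma but simply remarks that it "follows from the same arguments as those for \cite[Lemma 3]{TZhang11}", and your derivation — writing $(\widehat{\varepsilon}^\dag)_j=\langle v_j,\varepsilon\rangle$ with $\|v_j\|^2=e_j^{\mathbb{T}}(A_{\mathcal{J}_{\overline{S}}}^{\mathbb{T}}A_{\mathcal{J}_{\overline{S}}})^{-1}e_j\le\sigma_{\min}(A_{\mathcal{J}_{\overline{S}}})^{-2}$, applying the sub-Gaussian tail bound coordinatewise, and finishing with the $\sqrt{\|\mathcal{J}\|_\infty}$ group-norm estimate and a union bound over at most $p$ indices — is exactly that standard argument, correctly executed.
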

 \begin{alemma}\label{LS-noise}
  Define $\varepsilon^{\rm{LS}}\!:=\frac{1}{n}A^\mathbb{T}(Ax^{\rm{LS}}\!-b)$
  where $x^{\rm{LS}}$ is the solution defined by \eqref{defxls}.
  Under Assumption \ref{assump}, for any given $\eta\in(0,1)$
  the following inequality holds w.p. at least $1-\eta$:
  \[
   \varepsilon^{\rm LS}_{_{\mathcal {J}_i}}=0\ \ {\rm for}\ i\in \overline{S}\ {\rm and}\
   \|\mathcal{G}(\varepsilon^{\rm LS})\|_\infty\le \frac{\sigma\sqrt{2\|\mathcal{J}\|_\infty\log(2p/\eta)}}{n}\|A\|_{2,\infty}
   \ {\rm for}\ i\notin\overline{S}.
  \]
 \end{alemma}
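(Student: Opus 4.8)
The plan is to treat the two assertions in turn. The exact-zero claim for $i\in\overline{S}$ follows directly from the first-order optimality of $x^{\rm LS}$ in \eqref{defxls}: since $x^{\rm LS}$ minimizes the convex quadratic $\frac{1}{2n}\|Ax-b\|^2$ over the linear subspace $\{x\in\mathbb{R}^p:\ x_{\mathcal{J}_i}=0\ \text{for}\ i\notin\overline{S}\}$, its gradient restricted to that subspace vanishes, i.e. $A_{\mathcal{J}_{\overline{S}}}^{\mathbb{T}}(Ax^{\rm LS}-b)=0$, which is precisely the identity already invoked in the proof of Lemma \ref{bound-xLS}. Since $\varepsilon^{\rm LS}=\frac1n A^{\mathbb{T}}(Ax^{\rm LS}-b)$, this forces $\varepsilon^{\rm LS}_{\mathcal{J}_i}=0$ for every $i\in\overline{S}$, so that $\|\mathcal{G}(\varepsilon^{\rm LS})\|_\infty=\max_{i\notin\overline{S}}\|\varepsilon^{\rm LS}_{\mathcal{J}_i}\|$ and it remains only to bound the blocks outside $\overline{S}$.

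For that I would first express $\varepsilon^{\rm LS}$ explicitly in terms of $\varepsilon$. Because ${\rm supp}(\mathcal{G}(\overline{x}))\subseteq\mathcal{J}_{\overline{S}}$, the vector $A\overline{x}$ lies in the range of $A_{\mathcal{J}_{\overline{S}}}$, so the fitted values satisfy $Ax^{\rm LS}=Pb$ with $P$ the orthogonal projector onto that range (these are well defined even if $x^{\rm LS}$ itself is not unique). Using $b=A\overline{x}+\varepsilon$ and $(I-P)A\overline{x}=0$ gives $Ax^{\rm LS}-b=-(I-P)\varepsilon$, hence $\varepsilon^{\rm LS}=-\frac1n A^{\mathbb{T}}(I-P)\varepsilon$. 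For each coordinate $j\in\{1,\dots,p\}$, $(\varepsilon^{\rm LS})_j=-\frac1n\langle (I-P)A_j,\varepsilon\rangle$ is a linear functional of the independent sub-Gaussian vector $\varepsilon$, and therefore sub-Gaussian with parameter at most $\frac{\sigma}{n}\|(I-P)A_j\|\le\frac{\sigma}{n}\|A_j\|\le\frac{\sigma}{n}\|A\|_{2,\infty}$, where the nonexpansiveness of the projector $I-P$ is used.

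Finally I would apply the standard sub-Gaussian tail bound $\mathbb{P}\big(|(\varepsilon^{\rm LS})_j|>t\big)\le 2\exp\!\big(-n^2t^2/(2\sigma^2\|A\|_{2,\infty}^2)\big)$, take a union bound over the at most $p$ coordinates, and choose $t=\frac{\sigma}{n}\|A\|_{2,\infty}\sqrt{2\log(2p/\eta)}$ so that the total failure probability is at most $\eta$ — this is exactly the argument behind Lemma \ref{noise-result1} and \cite[Lemma 3]{TZhang11}. On the event that $|(\varepsilon^{\rm LS})_j|\le t$ for all $j$, every block with $i\notin\overline{S}$ obeys $\|\varepsilon^{\rm LS}_{\mathcal{J}_i}\|\le\sqrt{|\mathcal{J}_i|}\,t\le\sqrt{\|\mathcal{J}\|_\infty}\,t=\frac{\sigma\sqrt{2\|\mathcal{J}\|_\infty\log(2p/\eta)}}{n}\|A\|_{2,\infty}$, which is the claimed bound. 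The only slightly delicate points are the identification $Ax^{\rm LS}=Pb$ (well-definedness of the fit when $A_{\mathcal{J}_{\overline{S}}}$ may be rank-deficient, although under the ambient RSC hypothesis it is not) and checking that composing with $I-P$ leaves the crude column-norm bound on the sub-Gaussian parameter intact; the remainder is the routine one-dimensional concentration plus union bound.
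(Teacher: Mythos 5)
Your proposal is correct and follows essentially the route the paper intends: the paper gives no explicit proof of this lemma but states that it ``follows from the same arguments as those for [Lemma 3, TZhang11],'' which is exactly your projection identity $Ax^{\rm LS}-b=-(I-P)\varepsilon$ combined with the one-dimensional sub-Gaussian tail bound, a union bound over the $p$ coordinates, and the crude block estimate $\|\varepsilon^{\rm LS}_{\mathcal{J}_i}\|\le\sqrt{\|\mathcal{J}\|_\infty}\,\max_j|(\varepsilon^{\rm LS})_j|$; the deterministic vanishing of the blocks in $\overline{S}$ via the normal equations is likewise the fact the authors invoke later (in Lemma C.5). No gaps.
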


 \bigskip
 \noindent
 {\bf\large Appendix B}

 \medskip

 Next we shall provide the proof of Theorem \ref{exact-penalty}.
 This requires three technical lemmas.  The first two characterize
 some important properties of the function family $\Phi$.
 \begin{alemma}\label{lemma1-phi}
  Let $\phi\in\Phi$. Then, the set $(\partial\phi)^{-1}(\frac{1}{1-t_{\phi}^*})\cap[t^*_{\phi},1)$
  is nonempty and compact.
 \end{alemma}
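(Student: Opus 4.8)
The plan is to establish the two assertions --- nonemptiness and compactness of the set $\mathcal{T}:=(\partial\phi)^{-1}(\tfrac{1}{1-t_{\phi}^*})\cap[t^*_{\phi},1)$ --- by exploiting the convexity of $\phi$ together with the defining properties of the family $\Phi$, namely $[0,1]\subseteq{\rm int}({\rm dom}\,\phi)$, $\phi(1)=1$, and that $t_{\phi}^*\in[0,1]$ is the unique minimizer of $\phi$ over $[0,1]$. Since $\phi$ is closed proper convex with $[0,1]$ in the interior of its domain, $\phi$ is finite and continuous on a neighborhood of $[0,1]$, and its subdifferential mapping $\partial\phi$ is nonempty, monotone, and locally bounded there. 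I would first observe that on $[t_{\phi}^*,1]$ the function $\phi$ is nondecreasing (because $t_{\phi}^*$ minimizes it over $[0,1]$ and $\phi$ is convex), so every element of $\partial\phi(t)$ for $t\in[t_{\phi}^*,1]$ is nonnegative; moreover $\partial\phi(t_{\phi}^*)\ni 0$ (interior minimizer).

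For nonemptiness, the key step is an intermediate-value / mean-value argument for the monotone multifunction $\partial\phi$ on the interval $[t_{\phi}^*,1]$. By the convexity of $\phi$ and the mean value inequality, choosing any $g_1\in\partial\phi(1)$ gives $\phi(1)-\phi(t_{\phi}^*)\le g_1(1-t_{\phi}^*)$, while $\phi(1)=1$ and $\phi(t_{\phi}^*)=0$ yield $g_1\ge\tfrac{1}{1-t_{\phi}^*}$; similarly, picking $g^*\in\partial\phi(t_{\phi}^*)$ we may take $g^*=0<\tfrac{1}{1-t_{\phi}^*}$. Since the set $\{(t,g): t\in[t_{\phi}^*,1],\ g\in\partial\phi(t)\}$ is a closed connected set in $\mathbb{R}^2$ (the graph of a maximal monotone operator restricted to an interval of the interior of its domain is closed and connected), its projection onto the second coordinate is an interval containing both $0$ and $g_1\ge\tfrac1{1-t_{\phi}^*}$, hence contains $\tfrac{1}{1-t_{\phi}^*}$. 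Therefore there exists $t\in[t_{\phi}^*,1]$ with $\tfrac{1}{1-t_{\phi}^*}\in\partial\phi(t)$. It remains to rule out $t=1$: if $\tfrac1{1-t_{\phi}^*}\in\partial\phi(1)$ one still gets a valid point of $\mathcal{T}$ only if $t<1$, so I would argue that either the value $\tfrac1{1-t_{\phi}^*}$ is already attained at some $t<1$, or, using monotonicity of $\partial\phi$ and $\phi'_-(1)\ge\tfrac1{1-t_{\phi}^*}$, one can take the infimum of the set of such $t$ and show it lies strictly below $1$ unless $\phi$ is affine with slope exactly $\tfrac1{1-t_{\phi}^*}$ on a right neighborhood of $t_{\phi}^*$, in which case a suitable $t<1$ again works. (This is the only slightly delicate point, and the cleanest route is to note $\overline t_\phi$ in the paper is \emph{defined} as the minimum such $t$, so nonemptiness of $\mathcal T$ is exactly what legitimizes that definition; the argument above supplies it.)

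For compactness: $\mathcal{T}$ is bounded since it is contained in $[t_{\phi}^*,1]$. For closedness, suppose $t_j\in\mathcal{T}$ with $t_j\to\bar t\in[t_{\phi}^*,1]$. Each $t_j$ satisfies $\tfrac1{1-t_{\phi}^*}\in\partial\phi(t_j)$, and by the closedness of the graph of $\partial\phi$ (again valid because $[0,1]\subseteq{\rm int}({\rm dom}\,\phi)$) we get $\tfrac1{1-t_{\phi}^*}\in\partial\phi(\bar t)$; so I only need $\bar t<1$. The main obstacle, as flagged above, is exactly excluding the limit point $\bar t=1$. Here I would use that $\partial\phi$ is monotone: for $t<1$, $g\in\partial\phi(t)$ implies $g\le\phi'_-(1)$, and conversely, if $\tfrac1{1-t_{\phi}^*}=\phi'_-(1)$ then $\partial\phi$ could have the value $\tfrac1{1-t_{\phi}^*}$ on a whole interval $[\bar t',1]$, and one simply takes $\bar t'<1$ as the relevant (minimal) element --- so even in this degenerate case $\mathcal{T}$ contains a point strictly less than $1$ and is a closed subset of $[t_{\phi}^*,1)$ that is bounded away from $1$ by $\phi$'s convexity. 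Assembling these two facts gives that $\mathcal{T}$ is nonempty and compact, which is the claim; I would close by remarking that $\overline t_\phi:=\min\mathcal T$ is then well-defined, as asserted in the text.
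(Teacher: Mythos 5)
Your overall skeleton --- the intermediate-value property of the monotone graph of $\partial\phi$ on $[t_{\phi}^*,1]$ together with closedness of that graph --- is the same mechanism the paper uses, but two of your supporting claims are wrong, and the step you yourself flag as ``delicate'' is exactly where the content of the proof lies, and you do not supply it. First, $0\in\partial\phi(t_{\phi}^*)$ is false in general: $t_{\phi}^*$ is the minimizer of $\phi$ over the \emph{constrained} interval $[0,1]$, not an unconstrained minimizer. For the SCAD choice of Example \ref{SCAD} one has $t_{\phi}^*=0$ and $\partial\phi(0)=\{1/\varphi(1)\}$, and for $\phi(t)=t$ of Example \ref{HARD} one has $\partial\phi(0)=\{1\}$. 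The correct lower anchor for your intermediate-value argument is $\phi_{+}'(t_{\phi}^*)\le\frac{\phi(1)-\phi(t_{\phi}^*)}{1-t_{\phi}^*}=\frac{1}{1-t_{\phi}^*}$, which follows from the gradient inequality; with that replacement the existence of some $t\in[t_{\phi}^*,1]$ with $\frac{1}{1-t_{\phi}^*}\in\partial\phi(t)$ does go through.

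The genuine gap is that you never prove the strict inequality $\frac{1}{1-t_{\phi}^*}<\phi_{-}'(1)$, and that inequality is the whole point: once it is available, \emph{every} $b$ with $\frac{1}{1-t_{\phi}^*}\in\partial\phi(b)$ satisfies $b<1$ (otherwise $\phi_{-}'(b)\ge\phi_{-}'(1)>\frac{1}{1-t_{\phi}^*}$ by monotonicity), so the endpoint is excluded and the set in question coincides with $(\partial\phi)^{-1}(\frac{1}{1-t_{\phi}^*})\cap[t_{\phi}^*,+\infty)$, a closed bounded set; nonemptiness and compactness then follow in one stroke. The paper obtains this inequality, in the case where some $\overline{t}\in(t_{\phi}^*,1)$ has $\phi_{-}'(\overline{t})<\phi_{-}'(1)$, from the computation $1=\phi(1)-\phi(t_{\phi}^*)=\int_{t_{\phi}^*}^{1}\phi_{-}'(t)\,dt<\phi_{-}'(1)(1-t_{\phi}^*)$. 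Your sentence ``take the infimum \dots unless $\phi$ is affine with slope exactly $\frac{1}{1-t_{\phi}^*}$'' gestures at this dichotomy but proves neither branch. Moreover, in the degenerate branch ($\phi$ affine with slope $\frac{1}{1-t_{\phi}^*}$ on $(t_{\phi}^*,1)$, e.g.\ $\phi(t)=t$), your assertion that the set is ``a closed subset of $[t_{\phi}^*,1)$ \dots bounded away from $1$'' is simply false: there $(\partial\phi)^{-1}(\frac{1}{1-t_{\phi}^*})\cap[t_{\phi}^*,1)=[t_{\phi}^*,1)$, which is neither closed nor bounded away from $1$. That case has to be isolated and treated on its own terms (the paper's argument there secures only nonemptiness; what is actually needed downstream --- the existence of the minimum element $\overline{t}_{\phi}$ --- still holds, since $[t_{\phi}^*,1)$ has a minimum).
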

 \begin{proof}
  Since $[0,1]\subseteq{\rm int}({\rm dom}\phi)$, from \cite[Theorem 23.4]{Roc70}
  $\partial\phi(t)=[\phi_{-}'(t),\phi_{+}'(t)]$ is nonempty and bounded for each $t\in[0,1]$.
  We first argue that $(\partial\phi)^{-1}(\frac{1}{1-t_{\phi}^*})\cap[t^*_{\phi},1)\ne\emptyset$.
  Assume that there exists $\overline{t}\in(t^*_{\phi},1)$ such that
  $\phi_{-}'(\overline{t})<\phi_{-}'(1)$ (if not, we will have
  $\partial\phi(t)=\{\phi'(t)\}=\{\phi_{-}'(1)\}$ for all $t\in(t^*_{\phi},1)$,
  and hence there exists $\xi\in(t^*_{\phi},1)$ such that $\phi'(\xi)=\frac{\phi(1)}{1-t^*_{\phi}}$,
  which implies the desired statement). Together with the convexity of $\phi$ and
  \cite[Theorem 24.1]{Roc70}, we have $\phi_{-}'(t)\le\phi_{-}'(\overline{t})$
  for all $t\in[t^*_{\phi},\overline{t}]$. By \cite[Corollary 24.2.1]{Roc70},
  \begin{align*}
   \phi(1)&=\phi(1)-\phi(t^*_{\phi})
   =\int_{t^*_{\phi}}^{1}\phi_{-}'(t)dt
   =\int_{t^*_{\phi}}^{\overline{t}}\phi_{-}'(t)dt+\int_{\overline{t}}^{1}\phi_{-}'(t)dt\\
   &<\phi_{-}'(1)(\overline{t}-t^*_{\phi})+\int_{\overline{t}}^{1}\phi_{-}'(t)dt
   \le \phi_{-}'(1)(1-t^*_{\phi}).
  \end{align*}
  In addition, by the convexity of $\phi$,
  $\phi(1)\ge\phi(t^*_{\phi})+\phi_{+}'(t^*_{\phi})(1-t^*_{\phi})=\phi_{+}'(t^*_{\phi})(1-t^*_{\phi})$.
  Thus, $a:=\frac{\phi(1)}{1-t^*_{\phi}}=\frac{1}{1-t^*_{\phi}}\in[\phi_{+}'(t^*_{\phi}),\phi_{-}'(1))$.
  If $a=\phi_{+}'(t^*_{\phi})$, clearly, $t_{\phi}^*\in(\partial\phi)^{-1}(\frac{1}{1-t_{\phi}^*})\cap[t^*_{\phi},1)$.
  So, it suffices to consider the case $a\in(\phi_{+}'(t^*_{\phi}),\phi_{-}'(1))$.
  Now $(\partial\phi)^{-1}(a)\cap[0,1)\ne\emptyset$ (if not,
  $a\in\partial\phi(t')=[\phi_{-}'(t'),\phi_{+}'(t')]$ for $t'\ge 1$ or
  $t'<t^*_{\phi}$, which contradicts $a\in(\phi_{+}'(t^*_{\phi}),\phi_{-}'(1))$).

  \medskip

  Next we show that $(\partial\phi)^{-1}(\frac{1}{1-t_{\phi}^*})\cap[t^*_{\phi},1)$
  is compact. Fix an arbitrary $b\in(\partial\phi)^{-1}(\frac{1}{1-t_{\phi}^*})$.
  Since $(\partial\phi)^{-1}(\frac{1}{1-t_{\phi}^*})$ is compact, we only need
  to argue that $b<1$. This clearly holds by noting that
  $a=\frac{1}{1-t_{\phi}^*}\in\partial\phi(b)=[\phi_{-}'(b),\phi_{+}'(b)]$
  and $a\in[\phi_{+}'(t^*_{\phi}),\phi_{-}'(1))$.
 \end{proof}
 \begin{alemma}\label{lemma2-phi}
  Let $\phi\in\Phi$. For any given $\omega\ge 0$, define
  $\upsilon^*\!:={\displaystyle\min_{t\in[0,1]}\{\phi(t)+\omega(1-\!t)\}}$. Then,
  \[
    \left\{\begin{array}{ll}
    \upsilon^*=1&{\rm if}\ \omega\in(\phi_{-}'(1),+\infty);\\
    \upsilon^*\ge\frac{\omega(1-\overline{t}_{\phi})}{\phi_{-}'(1)(1-t^*_{\phi})}&{\rm if}\  \omega\in\big[\frac{1}{1-t^*_{\phi}},\phi_{-}'(1)\big];\\
    \upsilon^*\ge\omega(1\!-\!\overline{t}_{\phi}) &{\rm if}\  \omega\in\big[0,\frac{1}{1-t^*_{\phi}}\big).
    \end{array}\right.
  \]
 \end{alemma}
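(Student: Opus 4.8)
The plan is to reduce everything to the one-variable convex function $g(t):=\phi(t)+\omega(1-t)$ on $[0,1]$, for which $\upsilon^*=\min_{t\in[0,1]}g(t)$ (the minimum is attained by continuity on a compact set). Since $[0,1]\subseteq{\rm int}({\rm dom}\,\phi)$, the subdifferential sum rule gives $\partial g(t)=\partial\phi(t)-\omega=[\phi_-'(t)-\omega,\ \phi_+'(t)-\omega]$, and the monotonicity of $\partial\phi$ (as already invoked in the proof of Lemma~\ref{lemma1-phi}) lets one read off the sign of these intervals. The three cases for $\omega$ correspond exactly to three monotonicity patterns of $g$ on $[0,1]$, and I would treat them in turn.

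For $\omega>\phi_-'(1)$: for every $t<1$ one has $\phi_+'(t)\le\phi_-'(1)<\omega$, hence $\max\partial g(t)<0$, so $g$ is strictly decreasing on $[0,1]$ and its minimum is $g(1)=\phi(1)+0=1$, which is the claimed value.

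For $\tfrac{1}{1-t_\phi^*}\le\omega\le\phi_-'(1)$: here I would use the subgradient inequality for $\phi$ at $\overline{t}_\phi$ with the particular subgradient $\tfrac{1}{1-t_\phi^*}\in\partial\phi(\overline{t}_\phi)$, together with $\phi(\overline{t}_\phi)\ge\phi(t_\phi^*)=0$, to obtain, for all $t\in[0,1]$,
\[
 g(t)\ \ge\ \tfrac{1}{1-t_\phi^*}(t-\overline{t}_\phi)+\omega(1-t).
\]
The right-hand side is affine in $t$ with slope $\tfrac{1}{1-t_\phi^*}-\omega\le 0$, so its minimum over $[0,1]$ is attained at $t=1$ and equals $\tfrac{1-\overline{t}_\phi}{1-t_\phi^*}$; since $\omega\le\phi_-'(1)$, this is at least $\tfrac{\omega(1-\overline{t}_\phi)}{\phi_-'(1)(1-t_\phi^*)}$, as required.

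For $0\le\omega<\tfrac{1}{1-t_\phi^*}$: monotonicity of $\partial\phi$ and $\tfrac{1}{1-t_\phi^*}\in\partial\phi(\overline{t}_\phi)$ give $\phi_-'(t)\ge\phi_+'(\overline{t}_\phi)\ge\tfrac{1}{1-t_\phi^*}>\omega$ for every $t\in(\overline{t}_\phi,1]$, so $\min\partial g(t)>0$ there and $g$ is strictly increasing on $[\overline{t}_\phi,1]$; consequently the minimizer $t^\omega$ of $g$ over $[0,1]$ satisfies $t^\omega\le\overline{t}_\phi$, and then $\upsilon^*=\phi(t^\omega)+\omega(1-t^\omega)\ge\omega(1-t^\omega)\ge\omega(1-\overline{t}_\phi)$, using $\phi\ge0$ on $[0,1]$ and $\omega\ge0$. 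The only point requiring a little care is passing from statements about the one-sided derivatives of $g$ at the endpoints (strict negativity of $\max\partial g$ on $[0,1)$, strict positivity of $\min\partial g$ on $(\overline{t}_\phi,1]$) to genuine strict monotonicity of $g$ on the corresponding closed intervals; this is routine for a finite convex function, and no real obstacle is anticipated --- indeed the definition of $\overline{t}_\phi$ was evidently chosen precisely so that these three estimates close up.
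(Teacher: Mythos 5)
Your proposal is correct and follows essentially the same route as the paper: the extreme cases are settled by reading off the sign of $\partial\phi(t)-\omega$ (so the minimizer sits at $t=1$, resp.\ at some $t\le\overline{t}_{\phi}$, after which $\phi\ge 0$ on $[0,1]$ gives the bound), and the middle case uses the subgradient $\frac{1}{1-t_{\phi}^*}\in\partial\phi(\overline{t}_{\phi})$ together with $\omega\le\phi_{-}'(1)$ to arrive at $\frac{1-\overline{t}_{\phi}}{1-t_{\phi}^*}\ge\frac{\omega(1-\overline{t}_{\phi})}{\phi_{-}'(1)(1-t_{\phi}^*)}$. Your phrasing of the middle case via an affine minorant is just a slight repackaging of the paper's exact minimization of $\phi(t)+\frac{1}{1-t_{\phi}^*}(1-t)$ at $\overline{t}_{\phi}$, so no substantive difference.
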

 \begin{proof}
  When $\omega>\phi_{-}'(1)$, clearly, $\upsilon^*=\phi(1)$ since
  $\phi(t)+\omega(1-t)$ is nonincreasing in $[0,1]$.
  When $\omega\in\big[0,\frac{1}{1-t^*_{\phi}}\big)$,
  since $\phi_{-}'(t)\ge\phi_{+}'(\overline{t}_{\phi})>\omega$
  for any $t>\overline{t}_{\phi}$ by Lemma \ref{lemma1-phi},
  the optimal solution $\widehat{t}$ of $\min_{t\in[0,1]}\{\phi(t)+\omega(1\!-\!t)\}$
  satisfies $\widehat{t}\le\overline{t}_{\phi}$. By the convexity of $\phi$,
  \[
   \phi(t)+\omega(1\!-t)
   \ge \phi(\widehat{t})+\omega(1\!-\widehat{t})
   \ge \omega(1-\overline{t}_{\phi})\quad\ \forall t\in[0,1].
  \]
  This shows that $\upsilon^*\ge\omega(1-\overline{t}_{\phi})$ for this case.
  When $\omega\in\big[\frac{1}{1-t^*_{\phi}},\phi_{-}'(1)\big]$,
  by Lemma \ref{lemma1-phi}
  \[
    \min_{t\in[0,1]}\Big\{\phi(t)+\frac{1}{1-t^*}(1-t)\Big\}
    =\phi(\overline{t}_{\phi})+\frac{1}{1-t^*_{\phi}}(1-\overline{t}_{\phi})
    \ge\frac{1-\overline{t}_{\phi}}{1-t^*_{\phi}}
    \ge \frac{\omega(1-\overline{t}_{\phi})}{\phi_{-}'(1)(1-t^*_{\phi})},
  \]
  where the last inequality is due to $\omega\le\phi_{-}'(1)$.
  The proof is completed.
 \end{proof}

 For every $x\in\Omega$, with a parameter $\rho>0$ we define a truncated
 vector $x^{\rho}\in\Omega$ by
 \[
   (x^{\rho})_{\!_{\mathcal {J}_i}}
   =\left\{\begin{array}{cl}
    x_{\!_{\mathcal {J}_i}} &{\rm if}\ \|x_{\!_{\mathcal {J}_i}}\|>\frac{\phi'_-(1)}{\rho},\\
               0 & {\rm otherwise}.
               \end{array}\right.
 \]
 Then, the following result holds for the objective value of \eqref{G-Sparse}
 at $x^{\rho}$ and that of \eqref{pvc-GS} at $x$.

 \vspace{-0.2cm}
 \begin{alemma}\label{bound-penalty}
  Let $\phi\in\Phi$. Then, for any $x\in\Omega$ and $w\in[0,e]$,
  when $\rho>\overline{\rho}=\nu L_{\!f}\frac{(1-t^*_{\phi})\phi_-'(1)}{1-\overline{t}_{\phi}}$,
  \begin{equation}\label{ineq-lemma32}
     \nu f(x^{\rho})+\|\mathcal{G}(x^{\rho})\|_{0}
     \leq \nu f(x)+{\textstyle\sum_{i=1}^m}\big[\phi(w_i) +\rho(1-w_i)\|x_{\!_{\mathcal {J}_i}}\|\big],
  \end{equation}
  and moreover, $x^{\rho}=x$ and $\|\mathcal{G}(x)\|_{1}-\langle w,\mathcal{G}(x)\rangle=0$
  provided that \eqref{ineq-lemma32} becomes an equality.
 \end{alemma}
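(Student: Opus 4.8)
The plan is to decompose the group index set according to the truncation and treat the ``small'' and ``large'' groups separately. First I would set $F:=\{i:\|x_{\!_{\mathcal{J}_i}}\|>\phi'_-(1)/\rho\}$, so that $(x^{\rho})_{\!_{\mathcal{J}_i}}=x_{\!_{\mathcal{J}_i}}$ for $i\in F$ and $(x^{\rho})_{\!_{\mathcal{J}_i}}=0$ for $i\notin F$. Then $\|\mathcal{G}(x^{\rho})\|_0\le|F|=\sum_{i\in F}1$. For the right-hand side, I would write $\nu f(x)+\sum_{i=1}^m[\phi(w_i)+\rho(1-w_i)\|x_{\!_{\mathcal{J}_i}}\|]$ and, for each fixed $i$, bound the bracketed term from below using $\rho(1-w_i)\|x_{\!_{\mathcal{J}_i}}\|\ge \min_{t\in[0,1]}\{\phi(t)+\rho\|x_{\!_{\mathcal{J}_i}}\|(1-t)\}-\phi(w_i)$ — i.e. it suffices to lower-bound $\upsilon_i^*:=\min_{t\in[0,1]}\{\phi(t)+\omega_i(1-t)\}$ with $\omega_i:=\rho\|x_{\!_{\mathcal{J}_i}}\|$, and this is precisely what Lemma \ref{lemma2-phi} supplies.

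The key case split is then driven by Lemma \ref{lemma2-phi}. For $i\in F$ we have $\omega_i=\rho\|x_{\!_{\mathcal{J}_i}}\|>\phi'_-(1)\ge\frac{1}{1-t_\phi^*}$ (using $\phi'_-(1)\ge\phi(1)/(1-t_\phi^*)=\frac{1}{1-t_\phi^*}$, which follows from convexity and $\phi(t_\phi^*)=0$), hence either $\omega_i>\phi'_-(1)$, giving $\upsilon_i^*=1$, or $\omega_i\in[\frac{1}{1-t_\phi^*},\phi'_-(1)]$, giving $\upsilon_i^*\ge\frac{\omega_i(1-\overline{t}_\phi)}{\phi'_-(1)(1-t_\phi^*)}\ge 1$ since $\omega_i\ge\phi'_-(1)$ would be needed for the last step — more carefully, $\omega_i>\phi'_-(1)$ already handles the first subcase and in the boundary subcase $\omega_i\ge\phi'_-(1)$ fails, so I would instead just note $\upsilon_i^*\ge\frac{(1-\overline{t}_\phi)}{(1-t_\phi^*)}\cdot\frac{\omega_i}{\phi'_-(1)}$ and use $\omega_i>\phi'_-(1)$, i.e. $\frac{\omega_i}{\phi'_-(1)}>1$, to conclude $\upsilon_i^*\ge\frac{1-\overline{t}_\phi}{1-t_\phi^*}$; then since $\overline{t}_\phi\le 1$ this is at least... hmm, here I need $\upsilon_i^*\ge 1$, which requires $\overline{t}_\phi\le t_\phi^*$, false in general. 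So the correct bookkeeping is: for $i\in F$, $\omega_i>\phi'_-(1)$ puts us in the \emph{first} regime of Lemma \ref{lemma2-phi}, giving $\upsilon_i^*=1$ outright; the boundary regime never occurs for $i\in F$. Thus $\sum_{i\in F}\upsilon_i^*=|F|\ge\|\mathcal{G}(x^{\rho})\|_0$. For $i\notin F$ we have $\omega_i\le\phi'_-(1)$; Lemma \ref{lemma2-phi} gives $\upsilon_i^*\ge\omega_i(1-\overline{t}_\phi)$ in the low regime and $\upsilon_i^*\ge\frac{\omega_i(1-\overline{t}_\phi)}{\phi'_-(1)(1-t_\phi^*)}\ge\frac{\omega_i(1-\overline{t}_\phi)}{\phi'_-(1)^2(1-t_\phi^*)}\cdot\phi'_-(1)$... again I would just unify both as $\upsilon_i^*\ge\frac{\omega_i(1-\overline{t}_\phi)}{\phi'_-(1)(1-t_\phi^*)}$ when $\omega_i\le\phi'_-(1)$ (the low regime bound $\upsilon_i^*\ge\omega_i(1-\overline{t}_\phi)$ is even stronger since $\phi'_-(1)(1-t_\phi^*)\ge 1$). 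Hence $\sum_{i\notin F}\upsilon_i^*\ge\frac{(1-\overline{t}_\phi)}{\phi'_-(1)(1-t_\phi^*)}\sum_{i\notin F}\rho\|x_{\!_{\mathcal{J}_i}}\|=\frac{\rho(1-\overline{t}_\phi)}{\phi'_-(1)(1-t_\phi^*)}\sum_{i\notin F}\|x_{\!_{\mathcal{J}_i}}\|$.

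Putting this together, the right-hand side of \eqref{ineq-lemma32} is at least $\nu f(x)+\|\mathcal{G}(x^{\rho})\|_0+\frac{\rho(1-\overline{t}_\phi)}{\phi'_-(1)(1-t_\phi^*)}\sum_{i\notin F}\|x_{\!_{\mathcal{J}_i}}\|$. To close the argument I would compare $\nu f(x)$ with $\nu f(x^{\rho})$: since $f$ is $L_{\!f}$-Lipschitz on $\Omega$ and $\|x-x^{\rho}\|=\|(x-x^{\rho})\|\le\sum_{i\notin F}\|x_{\!_{\mathcal{J}_i}}\|$ (more precisely $\|x-x^\rho\|\le\sum_{i\notin F}\|x_{\!_{\mathcal{J}_i}}\|$ because the $\ell_2$ norm of a block-concatenation is at most the sum of block norms), we get $\nu f(x^{\rho})-\nu f(x)\le\nu L_{\!f}\sum_{i\notin F}\|x_{\!_{\mathcal{J}_i}}\|$. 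Since $\rho>\overline{\rho}=\nu L_{\!f}\frac{(1-t_\phi^*)\phi'_-(1)}{1-\overline{t}_\phi}$, the penalty coefficient $\frac{\rho(1-\overline{t}_\phi)}{\phi'_-(1)(1-t_\phi^*)}>\nu L_{\!f}$, so the gain term dominates the loss in $f$, yielding $\nu f(x^{\rho})+\|\mathcal{G}(x^{\rho})\|_0\le\text{RHS}$, which is \eqref{ineq-lemma32}. For the equality statement: if \eqref{ineq-lemma32} holds with equality, every inequality used must be tight. Tightness of the $f$-comparison together with $\rho(1-\overline{t}_\phi)/(\phi'_-(1)(1-t_\phi^*))>\nu L_{\!f}$ forces $\sum_{i\notin F}\|x_{\!_{\mathcal{J}_i}}\|=0$, i.e. $x_{\!_{\mathcal{J}_i}}=0$ for all $i\notin F$, hence $x^{\rho}=x$; and tightness of each $\upsilon_i^*$ bound (for $i\in F$, where $\upsilon_i^*=1$ is attained only at $t=1$, forcing the implicit minimizer, and comparing with the bound $\phi(w_i)+\rho(1-w_i)\|x_{\!_{\mathcal{J}_i}}\|\ge\upsilon_i^*$ attained at $w_i$) forces $w_i=1$ whenever $\|x_{\!_{\mathcal{J}_i}}\|>0$, which gives $(1-w_i)\|x_{\!_{\mathcal{J}_i}}\|=0$ for every $i$, i.e. $\|\mathcal{G}(x)\|_1-\langle w,\mathcal{G}(x)\rangle=0$. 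The main obstacle I anticipate is the careful bookkeeping in the equality case — tracking exactly which minimizers in Lemma \ref{lemma2-phi} are attained and translating attainment at $t=1$ into the complementarity condition $\langle e-w,\mathcal{G}(x)\rangle=0$ — rather than the inequality itself, which is a fairly direct assembly of Lemma \ref{lemma2-phi} and the Lipschitz bound on $f$.
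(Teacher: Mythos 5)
Your proposal is correct and follows essentially the same route as the paper's proof: the set $F$ is exactly the paper's $I_1$, the lower bounds come from Lemma \ref{lemma2-phi} applied with $\omega_i=\rho\|x_{\!_{\mathcal{J}_i}}\|$, the surplus $\frac{\rho(1-\overline{t}_{\phi})}{\phi_-'(1)(1-t_{\phi}^*)}\sum_{i\notin F}\|x_{\!_{\mathcal{J}_i}}\|>\nu L_{\!f}\sum_{i\notin F}\|x_{\!_{\mathcal{J}_i}}\|$ absorbs the Lipschitz change in $f$, and the equality case is settled by the same subgradient inequality $\phi(w_i)\ge 1-\phi_-'(1)(1-w_i)$ on $F$. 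Despite the mid-proof hesitations, your final bookkeeping (including the observation $\phi_-'(1)(1-t_{\phi}^*)\ge 1$ used to merge the two low regimes) is sound and matches the paper.
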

 \begin{proof}
  Fix arbitrary $x\in\Omega,\,w\in[0,e]$ and $\rho>\overline{\rho}$.
  Applying Lemma \ref{lemma2-phi} with $\omega=\rho\|x_{\!_{\mathcal {J}_i}}\|$
  for every $i\in\{1,\ldots,m\}$ delivers
  \begin{align}\label{temp-xrho}
    \left\{\begin{array}{ll}
    \phi(w_i) +\rho(1-w_i)\|x_{\!_{\mathcal {J}_i}}\|\ge 1&{\rm if}\ i\in I_1;\\
    \phi(w_i) +\rho(1-w_i)\|x_{\!_{\mathcal {J}_i}}\|\ge \frac{(1-\overline{t}_{\phi})\rho\|x_{\!_{\mathcal {J}_i}}\|}{\phi_{-}'(1)(1-t^*_{\phi})}
    &{\rm if}\  i\in I_2;\\
    \phi(w_i) +\rho(1-w_i)\|x_{\!_{\mathcal {J}_i}}\|\geq\rho\|x_{\!_{\mathcal {J}_i}}\|(1\!-\!\overline{t}_{\phi})
    &{\rm if}\  i\in I_3.
    \end{array}\right.
  \end{align}
  where $I_1:=\!\big\{i: \rho\|x_{\!_{\mathcal {J}_i}}\|>\phi_{-}'(1)\big\},\,
  I_2:=\big\{i:\rho\|x_{\!_{\mathcal{J}_i}}\|\in[\frac{1}{1-t^*_{\phi}},\phi_{-}'(1)]\big\}$
  and $I_3:=(I_1\cup I_2)^{c}$. From the expression of $x^{\rho}$ and
  $\rho>\nu L_{\!f}\frac{(1-t^*_{\phi})\phi_-'(1)}{1-\overline{t}_{\phi}}
  \geq\frac{\nu L_{\!f}}{1-\overline{t}_{\phi}}$, it immediately follows that
  \[
    \sum_{i\in I_1}\big[\phi(w_i) +\rho(1-w_i)\|x_{\!_{\mathcal {J}_i}}\|\big]
    =\|\mathcal{G}(x^\rho)\|_{0}\ \ {\rm and}\
    \sum_{i\in I_2\cup I_3 }\!\big[\phi(w_i) +\rho(1-w_i)\|x_{\!_{\mathcal {J}_i}}\|\big]
    \ge\nu L_f\|x_{\!_{\mathcal {J}_i}}\|.
  \]
  Together with $|f(x)\!-\!f(x^{\rho})|\leq L_{\!f}\|x\!-\!x^{\rho}\|$
  by the Lipschitz continuity of $f$, we have
  \begin{align}\label{temp-eq0}
   &\sum_{i=1}^m\big[\phi(w_i) +\rho(1-w_i)\|x_{\!_{\mathcal {J}_i}}\|\big]-\|\mathcal{G}(x^\rho)\|_{0}\nonumber\\
   &= \sum_{i\in I_1\cup I_2\cup I_3}\big[\phi(w_i) +\rho(1-w_i)\|x_{\!_{\mathcal {J}_i}}\|\big]-\|\mathcal{G}(x^\rho)\|_{0}\nonumber\\
   &\ge \sum_{i\in I_2\cup I_3}\nu L_{\!f}\|x_{\!_{\mathcal {J}_i}}\|=\nu L_{\!f}\|\mathcal{G}(x)-\mathcal{G}(x^{\rho})\|_1\nonumber\\
   &\ge \nu L_{\!f}\|x-x^{\rho}\|\ge \nu|f(x)-f(x^{\rho})|.
  \end{align}
  By the arbitrariness of $x\in\Omega,w\in[0,e]$ and $\rho>\overline{\rho}$,
  the first part of conclusions follows.

  \medskip

  Next we prove the second part. Since inequality \eqref{ineq-lemma32} becomes an equality, i.e.,
  \begin{equation}\label{temp-eq00}
    \nu|f(x^{\rho})-f(x)|=
    {\textstyle\sum_{i=1}^m}\big[\phi(w_i) +\rho(1-w_i)\|x_{\!_{\mathcal {J}_i}}\|\big]-\|\mathcal{G}(x^{\rho})\|_{0},
  \end{equation}
  together with inequality \eqref{temp-eq0} it immediately follows that
  \begin{align}\label{temp-eq1}
  \sum_{i\in I_2\cup I_3}\big[\phi(w_i) +\rho(1-w_i)\|x_{\!_{\mathcal {J}_i}}\|\big]
    = \sum_{i\in I_2\cup I_3}\nu L_{\!f}\|x_{\!_{\mathcal {J}_i}}\|.
  \end{align}
  Suppose on the contradiction that $x\neq x^{\rho}$.
  Then there exists an index $k\in I_2\cup I_3$ such that
  $\|x_{\!_{\mathcal {J}_k}}\|\ne 0$. By \eqref{temp-xrho} and
  $\rho>\nu L_{\!f}\frac{(1-t^*_{\phi})\phi_-'(1)}{1-\overline{t}_{\phi}}$,
  $\phi(w_k) +\rho(1-w_k)\|x_{\!_{\mathcal {J}_k}}\|>\nu L_f\|x_{\!_{\mathcal {J}_k}}\|$.
  Together with $\phi(w_i) +\rho(1-w_i)\|x_{\!_{\mathcal {J}_i}}\|\ge\nu L_{\!f}\|x_{\!_{\mathcal {J}_i}}\|$
  for all $i\in I_2\cup I_3$, we obtain
  \[
    \sum_{i\in I_2\cup I_3}\big[\phi(w_i) +\rho(1-w_i)\|x_{\!_{\mathcal {J}_i}}\|\big]
    > \sum_{i\in I_2\cup I_3}\nu L_{\!f}\|x_{\!_{\mathcal {J}_i}}\|,
  \]
  which contradicts \eqref{temp-eq1}. Substituting $x=\!x^{\rho}$
  into \eqref{temp-eq00} and using the definition of $x^{\rho}$ yields
  \[
   \sum_{i\in I_1\cup I_3}\big[\phi(w_i) +\rho(1-w_i)\|x_{\!_{\mathcal {J}_i}}\|\big]
   =\|\mathcal{G}(x^\rho)\|_{0}.
  \]
  Notice that
  \(
    \phi(w_i)\!\ge\!\phi(1) +\phi_{-}'(1)(w_i-1)\geq \phi(1)-\rho\|x_{\!_{\mathcal {J}_i}}\|(1-w_i)
  \)
  for every $i\in I_1$, and hence
  \(
   \sum_{i\in I_1}\big[\phi(w_i) +\rho(1-w_i)\|x_{\!_{\mathcal {J}_i}}\|\big]
   \ge |I_1|=\|\mathcal{G}(x)\|_0.
  \)
  Together with $\phi(w_i) +\rho(1-w_i)\|x_{\!_{\mathcal {J}_i}}\|\ge 0$ for $i\in I_3$,
  the last equality implies that $\phi(w_i)=1$ for $i\in I_1$
  and $\sum_{i\in I_3}\phi(w_i)=0$. Clearly, the latter is equivalent to saying that
  $w_i=t^*_{\phi}$ for $i\in I_3$. Now from \eqref{temp-eq0} we get
  \[
   {\textstyle\sum_{i=1}^m}\big[\phi(w_i) +\rho(1-w_i)\|x_{\!_{\mathcal {J}_i}}\|\big]
  =|I_1|+{\textstyle\sum_{i=1}^m}\rho(1-w_i)\|x_{\!_{\mathcal {J}_i}}\|
  =\|\mathcal{G}(x^\rho)\|_{0}.
  \]
  This means that $\sum_{i=1}^m\rho(1-w_i)\|x_{\!_{\mathcal {J}_i}}\|=0$.
  Thus, we complete the proof.
  \end{proof}

  \medskip
  \noindent
  {\bf\large The proof of Theorem \ref{exact-penalty}:}
  \begin{aproof}
  Fix an arbitrary $\rho>\overline{\rho}$. Let $\mathcal{S}$ be the feasible set
  of \eqref{vc-GS}, and let $\mathcal{S}_{\rho}$ be that of \eqref{pvc-GS} associated to $\rho$.
  We first prove that $\mathcal{S}^*\subseteq\mathcal{S}_{\rho}^*$.
  Fix an arbitrary $(\overline{x},\overline{w})\in\mathcal{S}^*$.
  Then, $\overline{x}$ is globally optimal to \eqref{G-Sparse} and $\|\mathcal{G}(\overline{x})\|_{0}=\sum_{i=1}^m\phi(\overline{w}_i)$.
  Let $(x,w)$ be an arbitrary point from $\mathcal{S}_{\rho}$. Assume that
   $x^\rho$ be defined as in Lemma \ref{bound-penalty}. Then
  \begin{align}
   \nu f(x)+{\textstyle\sum_{i=1}^m}\big[\phi(w_i) +\rho(1-w_i)\|x_{\!_{\mathcal {J}_i}}\|\big]
   &\ge \nu f(x^{\rho})+\|\mathcal{G}(x^{\rho})\|_{0}
   \ge \nu f(\overline{x})+\|\mathcal{G}(\overline{x})\|_{0}\nonumber\\
   &=\nu f(\overline{x})+\sum_{i=1}^m\big[\phi(\overline{w}_i) +\rho(1-\overline{w}_i)\|x_{\!_{\mathcal {J}_i}}\|\big],\nonumber
  \end{align}
   where the second inequality is due to $x^\rho\in\Omega$.
   Notice that $(\overline{x},\overline{w})\in \mathcal{S}_{\rho}$ and
   $(x,w)$ is an arbitrary point from $\mathcal{S}_{\rho}$.
   The last inequality shows that $(\overline{x},\overline{w})\in \mathcal{S}_{\rho}^*$,
   and then $\mathcal{S}^*\subseteq\mathcal{S}_{\rho}^*$.

   \medskip

  We next prove $\mathcal{S}_{\rho}^*\subseteq\mathcal{S}^*$.
  Fix an arbitrary $(\overline{x},\overline{w})\in\mathcal{S}^*_{\rho}$.
  Define $\overline{w}^{\rho}\in \mathbb{R}^m$ by
  \[
    \overline{w}_i^{\rho}:=\left\{
        \begin{array}{cl}
          1 & {\rm if}\ \rho\|\overline{x}_{\!_{\mathcal {J}_i}}\|\!>\!\phi_{-}'(1);\\
          t^*_{\phi} & {\rm if}\ \rho\|\overline{x}_{\!_{\mathcal {J}_i}}\|\!\le\!\phi_{-}'(1),
        \end{array}
      \right.\ \ {\rm for}\ \ i=1,2,\ldots,m.
 \]
  Then, we have $\sum_{i=1}^m\phi(\overline{w}_i^{\rho})=\|\mathcal{G}(\overline{x})\|_{0}$
  and $\|\mathcal{G}(\overline{x}^{\rho})\|_{1}
  =\sum_{i=1}^m\overline{w}_i^{\rho}\|\overline{x}^{\rho}_{\!_{\mathcal {J}_i}}\|$,
  where $\overline{x}^{\rho}$ is defined as in Lemma \ref{bound-penalty} with $x=\overline{x}$.
  From the results of Lemma \ref{bound-penalty}, it follows that
 \begin{align}
  &\nu f(\overline{x})+{\textstyle\sum_{i=1}^m}\big[\phi(\overline{w}_i) +\rho(1-\overline{w}_i)\|\overline{x}_{\!_{\mathcal {J}_i}}\|\big]\geq\nu f(\overline{x}^{\rho})+\|\mathcal{G}(\overline{x}^{\rho})\|_{0}\nonumber\\
  &= \nu f(\overline{x}^{\rho})+{\textstyle\sum_{i=1}^m}\big[\phi(\overline{w}_i^{\rho}) +\rho(1-\overline{w}_i^{\rho})\|\overline{x}^{\rho}_{\!_{\mathcal {J}_i}}\|\big]\nonumber\\
  &\geq \nu f(\overline{x})+{\textstyle\sum_{i=1}^m}\big[\phi(\overline{w}_i) +\rho(1-\overline{w}_i)\|\overline{x}_{\!_{\mathcal {J}_i}}\|\big],\nonumber
 \end{align}
  where the last inequality is due to $(\overline{x}^{\rho},\overline{w}_i^{\rho})\in \mathcal{S}_{\rho}$.
  The last inequality implies that
  \[
   \nu f(\overline{x})+{\textstyle\sum_{i=1}^m}\big[\phi(\overline{w}_i) +\rho(1-\overline{w}_i)\|\overline{x}_{\!_{\mathcal {J}_i}}\|\big]
   =\nu f(\overline{x}^{\rho})+\|\mathcal{G}(\overline{x}^{\rho})\|_{0}.
  \]
 Using Lemma \ref{bound-penalty} again, we have $\overline{x}=\overline{x}^\rho$ and
 $\|\mathcal{G}(\overline{x})\|_{1}-\sum_{i=1}^m\overline{w}_i\|\overline{x}_{\!_{\mathcal {J}_i}}\|=0$,
 which implies $(\overline{x},\overline{w})\in \mathcal{S}$. Now let $(x,w)$ be an arbitrary point
 from $\mathcal{S}$. Then $(x,w)\in\mathcal{S}_{\rho}$, and we have
 \begin{align}
  \nu f(x)+{\textstyle\sum_{i=1}^m}\phi(w_i)
  &=\nu f(x)+{\textstyle\sum_{i=1}^m}\big[\phi(w_i) +\rho(1-w_i)\|x_{\!_{\mathcal {J}_i}}\|\big]\nonumber\\
  &\geq \nu f(\overline{x})+{\textstyle\sum_{i=1}^m}\big[\phi(\overline{w}_i)
       +\rho(1-\overline{w}_i)\|\overline{x}_{\!_{\mathcal {J}_i}}\|\big]\nonumber\\
  & =\nu f(\overline{x})+{\textstyle\sum_{i=1}^m}\phi(\overline{w}_i).\nonumber
 \end{align}
 Notice that $(\overline{x},\overline{w})\in \mathcal{S}$. From the last inequality
 and the arbitrariness of $(x,w)$ in $\mathcal{S}$, it follows that $(\overline{x},\overline{w})\in \mathcal{S}^*$.
 Thus, by the arbitrariness of $(\overline{x},\overline{w})$ in $\mathcal{S}_{\rho}^*$,
 we obtain that $\mathcal{S}_{\rho}^*\subseteq\mathcal{S}^*$.
 Together with $\mathcal{S}^*\subseteq\mathcal{S}_{\rho}^*$, we complete the proof of theorem.
 \end{aproof}

 \bigskip
 \noindent
 {\bf\large Appendix C.}

 \medskip

 To achieve the results of Theorem \ref{errbound1} and Theorem \ref{errbound2},
 we need to establish the following two lemmas where $\delta^{k}\!:=x^{k}\!-\overline{x}$
 and $v^k=e-w^k$ for $k\ge 1$. The first one states a relation between
 $\sum_{i\in (S^{k-1})^c}\!\|\delta^k_{\!_{\mathcal{J}_i}}\|$
 and $\sum_{i\in S^{k-1}}\!\|\delta^k_{\!_{\mathcal{J}_i}}\|$ where $S^{k-1}\supset\overline{S}$
 is an index set.
 \begin{alemma}\label{noise1-errbound}
  For $k\ge 1$, if there is an index set $S^{k-1}\supseteq \overline{S}$
  such that $\min_{i\in(S^{k-1})^c}w^{k-1}_i\!\le\overline{t}_{\phi}$,
  then with $\lambda^{k-1}\ge \frac{(3-\overline{t}_{\phi})\|\mathcal{G}(\widehat{\varepsilon})\|_\infty}{1-\overline{t}_{\phi}}$
  it holds that
  \(
   \sum_{i\in(S^{k-1})^c}\|\delta^k_{\!_{\mathcal{J}_i}}\|
   \le\frac{2}{1-\overline{t}_{\phi}}\sum_{i\in S^{k-1}}\!\|\delta^k_{\!_{\mathcal{J}_i}}\|.
  \)
 \end{alemma}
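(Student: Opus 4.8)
The plan is to run the standard ``basic inequality'' argument for the weighted group Lasso subproblem \eqref{expm-subx}, using $\overline{x}$ as the competitor. Since $x^k$ minimizes $x\mapsto\frac{1}{2n}\|Ax-b\|^2+\lambda^{k-1}\sum_{i=1}^m(1-w_i^{k-1})\|x_{\mathcal{J}_i}\|$ over $\Omega$ and $\overline{x}$ is feasible for this subproblem (i.e. $\overline{x}\in\Omega$, which is part of the standing statistical setup), comparing the two objective values and substituting $b=A\overline{x}+\varepsilon$, so that $Ax^k-b=A\delta^k-\varepsilon$ and $A\overline{x}-b=-\varepsilon$, gives after expanding $\|A\delta^k-\varepsilon\|^2-\|\varepsilon\|^2$ and recalling $\widehat{\varepsilon}=\frac1n A^{\mathbb{T}}\varepsilon$ that
\[
  \frac{1}{2n}\|A\delta^k\|^2\le\langle\delta^k,\widehat{\varepsilon}\rangle
  +\lambda^{k-1}\sum_{i=1}^m(1-w_i^{k-1})\big(\|\overline{x}_{\mathcal{J}_i}\|-\|x^k_{\mathcal{J}_i}\|\big).
\]
Since $\frac1{2n}\|A\delta^k\|^2\ge0$, it suffices to bound the right-hand side from above in terms of group-norm sums of $\delta^k$.

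For the noise term, Cauchy--Schwarz within each group combined with $\sum_i\|\delta^k_{\mathcal{J}_i}\|=\sum_{i\in S^{k-1}}\|\delta^k_{\mathcal{J}_i}\|+\sum_{i\in(S^{k-1})^c}\|\delta^k_{\mathcal{J}_i}\|$ gives $\langle\delta^k,\widehat{\varepsilon}\rangle\le\|\mathcal{G}(\widehat{\varepsilon})\|_\infty\big(\sum_{i\in S^{k-1}}\|\delta^k_{\mathcal{J}_i}\|+\sum_{i\in(S^{k-1})^c}\|\delta^k_{\mathcal{J}_i}\|\big)$. For the penalty term, split the sum at $S^{k-1}$: on $S^{k-1}$ use $0\le1-w_i^{k-1}\le1$ and the triangle inequality $\|\overline{x}_{\mathcal{J}_i}\|-\|x^k_{\mathcal{J}_i}\|\le\|\delta^k_{\mathcal{J}_i}\|$; on $(S^{k-1})^c$ use $\overline{S}\subseteq S^{k-1}$ (so $\overline{x}_{\mathcal{J}_i}=0$ and $\|\overline{x}_{\mathcal{J}_i}\|-\|x^k_{\mathcal{J}_i}\|=-\|\delta^k_{\mathcal{J}_i}\|$) together with $1-w_i^{k-1}\ge1-\overline{t}_\phi$ there, which is the only place the hypothesis on $w^{k-1}$ enters. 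This yields
\[
  \lambda^{k-1}\!\sum_{i=1}^m(1-w_i^{k-1})\big(\|\overline{x}_{\mathcal{J}_i}\|-\|x^k_{\mathcal{J}_i}\|\big)
  \le\lambda^{k-1}\!\sum_{i\in S^{k-1}}\!\|\delta^k_{\mathcal{J}_i}\|-\lambda^{k-1}(1-\overline{t}_\phi)\!\!\sum_{i\in(S^{k-1})^c}\!\!\|\delta^k_{\mathcal{J}_i}\|.
\]

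Substituting both estimates and collecting the $(S^{k-1})^c$-sums on the left gives
\[
  \big(\lambda^{k-1}(1-\overline{t}_\phi)-\|\mathcal{G}(\widehat{\varepsilon})\|_\infty\big)\!\!\sum_{i\in(S^{k-1})^c}\!\!\|\delta^k_{\mathcal{J}_i}\|
  \le\big(\lambda^{k-1}+\|\mathcal{G}(\widehat{\varepsilon})\|_\infty\big)\!\sum_{i\in S^{k-1}}\!\|\delta^k_{\mathcal{J}_i}\|,
\]
and the hypothesis $\lambda^{k-1}\ge\frac{(3-\overline{t}_\phi)\|\mathcal{G}(\widehat{\varepsilon})\|_\infty}{1-\overline{t}_\phi}$ is exactly what makes the left coefficient positive and forces $\frac{\lambda^{k-1}+\|\mathcal{G}(\widehat{\varepsilon})\|_\infty}{\lambda^{k-1}(1-\overline{t}_\phi)-\|\mathcal{G}(\widehat{\varepsilon})\|_\infty}\le\frac{2}{1-\overline{t}_\phi}$, since clearing denominators reduces this last inequality to $(3-\overline{t}_\phi)\|\mathcal{G}(\widehat{\varepsilon})\|_\infty\le(1-\overline{t}_\phi)\lambda^{k-1}$. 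Dividing through yields the claimed cone inequality. The argument is essentially routine; the only points that need care are the sign bookkeeping in the penalty split (which relies on $\overline{x}$ vanishing off $\overline{S}\subseteq S^{k-1}$ and on $1-w^{k-1}_i\ge0$) and matching the threshold constant $\frac{3-\overline{t}_\phi}{1-\overline{t}_\phi}$ so that the ratio collapses to $\frac{2}{1-\overline{t}_\phi}$.
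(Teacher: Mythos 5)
Your proposal is correct and follows essentially the same route as the paper's proof: the basic inequality from the optimality of $x^k$ versus the feasible competitor $\overline{x}$, a group-wise Cauchy--Schwarz bound $\langle\delta^k,\widehat{\varepsilon}\rangle\le\|\mathcal{G}(\widehat{\varepsilon})\|_\infty\sum_i\|\delta^k_{\!_{\mathcal{J}_i}}\|$, the split of the weighted penalty at $S^{k-1}$ using $0\le 1-w_i^{k-1}\le 1$ on $S^{k-1}$ and $1-w_i^{k-1}\ge 1-\overline{t}_{\phi}$ on $(S^{k-1})^c$ (where $\overline{x}$ vanishes), and finally the observation that $\lambda^{k-1}\ge\frac{(3-\overline{t}_{\phi})\|\mathcal{G}(\widehat{\varepsilon})\|_\infty}{1-\overline{t}_{\phi}}$ makes the ratio $\frac{\lambda^{k-1}+\|\mathcal{G}(\widehat{\varepsilon})\|_\infty}{\lambda^{k-1}(1-\overline{t}_{\phi})-\|\mathcal{G}(\widehat{\varepsilon})\|_\infty}$ at most $\frac{2}{1-\overline{t}_{\phi}}$. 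Your reading of the hypothesis on $w^{k-1}$ as holding for \emph{all} $i\in(S^{k-1})^c$ matches what the paper's own proof uses, so the argument is complete.
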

 \begin{proof}
 By the optimality of $x^k$ and the feasibility of $\overline{x}$ to the subproblem \eqref{expm-subx},
 we have
 \[
  \frac{1}{2n}\big\|Ax^k-b\big\|^2 +\lambda^{k-1}\sum_{i=1}^mv_i^{k-1}\big\|x^k_{\!_{\mathcal{J}_i}}\big\|
  \le\frac{1}{2n}\big\|A\overline{x}-b\big\|^2 +\lambda^{k-1}\sum_{i=1}^mv_i^{k-1}\big\|\overline{x}_{\!_{\mathcal{J}_i}}\big\|
 \]
 which, by using $\delta^k=x^k-\overline{x}$, $\varepsilon=b-A\overline{x}$
 and $\widehat{\varepsilon}=\frac{1}{n}A^\mathbb{T}\varepsilon$, can be rearranged as follows:
 \[
   \frac{1}{2n}\big\|A\delta^k\big\|^2 \le \langle \widehat{\varepsilon},\delta^k\rangle +\lambda^{k-1}\sum_{i=1}^mv_i^{k-1}\!\left(\big\|\overline{x}_{\!_{\mathcal{J}_i}}\big\|-\big\|x^k_{\!_{\mathcal{J}_i}}\big\|\right).
 \]
 Together with $\overline{x}_{\!_{\mathcal{J}_i}}=0$ for all $i\in\overline{S}^{c}$
 and the definition of $\mathcal{G}(\cdot)$, we obtain that
 \begin{align}\label{noise1p-eb}
  \frac{1}{2n}\big\|A\delta^k\big\|^2
  &\le \sum_{i=1}^m\langle\widehat{\varepsilon}_{\!_{\mathcal{J}_i}},\delta^k_{\!_{\mathcal{J}_i}}\rangle
  +\lambda^{k-1}\sum_{i\in \overline{S}} v_i^{k-1}\left(\big\|\overline{x}_{\!_{\mathcal{J}_i}}\big\|-\big\|x^k_{\!_{\mathcal{J}_i}}\big\|\right)
  -\lambda^{k-1}\sum_{i\in\overline{S}^{c}} v_i^{k-1}\big\|x^k_{\!_{\mathcal{J}_i}}\big\|\nonumber\\
  &\le \langle \mathcal{G}(\widehat{\varepsilon}),\mathcal{G}(\delta^k)\rangle
      +\lambda^{k-1}\sum_{i\in \overline{S}} v^{k-1}_i\big\|\delta^k_{\!_{\mathcal{J}_i}}\big\|
      -\lambda^{k-1}\!\sum_{i\in(S^{k-1})^c}\!v^{k-1}_i\big\|\delta^k_{\!_{\mathcal{J}_i}}\big\|\\
  &\leq \sum_{i\in S^{k-1}\backslash\overline{S}}\big\|\widehat{\varepsilon}_{\!_{\mathcal{J}_i}}\big\|
         \big\|\delta^k_{\!_{\mathcal{J}_i}}\big\| +\big(\lambda^{k-1}+\|\mathcal{G}(\widehat{\varepsilon})\|_\infty\big)
         \sum_{i\in \overline{S}}\big\|\delta^k_{\!_{\mathcal{J}_i}}\big\|\nonumber\\
  &\quad\ +\big[\|\mathcal{G}(\widehat{\varepsilon})\|_\infty-\lambda^{k-1}(1-\overline{t}_{\phi})\big]\!
          {\textstyle\sum_{i\in(S^{k-1})^c}}\,\big\|\delta^k_{\!_{\mathcal{J}_i}}\big\|\nonumber
 \end{align}
 where the last inequality are due to $\min_{i\in(S^{k-1})^{c}}v^{k-1}_i\geq 1-\overline{t}_{\phi}$.
 Then, it holds that
 \begin{align*}
  &\frac{1}{2n}\|A\delta^k\|^2+\Big[\lambda^{k-1}(1-\overline{t}_{\phi})-\|\mathcal{G}(\widehat{\varepsilon})\|_\infty\Big]
   \!\sum_{i\in(S^{k-1})^c}\big\|\delta^k_{\!_{\mathcal{J}_i}}\big\|\nonumber\\
  &\le\sum_{i\in S^{k-1}\backslash\overline{S}}\|\widehat{\varepsilon}_{\!_{\mathcal{J}_i}}\|\|\delta^k_{\!_{\mathcal{J}_i}}\| +(\lambda^{k-1}\!+\|\mathcal{G}(\widehat{\varepsilon})\|_\infty)\sum_{i\in \overline{S}} \|\delta^k_{\!_{\mathcal{J}_i}}\|
  \le (\lambda^{k-1}\!+\!\|\mathcal{G}(\widehat{\varepsilon})\|_\infty)\!\sum_{i\in S^{k-1}}\!\|\delta^k_{\!_{\mathcal{J}_i}}\|\nonumber
 \end{align*}
 Together with $\frac{1}{2n}\|A\delta^k\|^2\ge 0$ and $\lambda^{k-1}\geq \frac{(3-\overline{t}_{\phi})\|\mathcal{G}(\widehat{\varepsilon})\|_\infty}{1-\overline{t}_{\phi}}$,
 we get the desired inequality.
 \end{proof}

 When $S^{k-1}$ in Lemma \ref{noise1-errbound} is also such that
 the matrix $A$ satisfies the RSC in $\mathcal {C}(\overline{S},|S^{k-1}|)$ with
 constant $\gamma_k>0$, the result of Lemma \ref{noise1-errbound}
 can be strengthened as follows.
 \begin{alemma}\label{noise2-errbound}
  For $k\ge 1$, if there is an index set $S^{k-1}\supseteq \overline{S}$
  such that $\min_{i\in(S^{k-1})^c}w^{k-1}_i\!\le\overline{t}_{\phi}$
  and $A$ has the RSC over $\mathcal {C}(\overline{S},|S^{k-1}|)$ with constant $\gamma_{k}>0$,
  then with $\lambda^{k-1}\ge \frac{(3-\overline{t}_{\phi})\|\mathcal{G}(\widehat{\varepsilon})\|_\infty}{1-\overline{t}_{\phi}}$
  \[
    \|\delta^k\|\le\frac{1}{\gamma_{k}}\Big(\big\|\left[\mathcal{G}(\widehat{\varepsilon})\right]_{S^{k-1}}\big\| +\lambda^{k-1}\sqrt{{\textstyle\sum_{i\in \overline{S}}}\,(v^{k-1}_i)^2}\Big).
  \]
 \end{alemma}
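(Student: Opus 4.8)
The plan is to redo the optimality computation from the proof of Lemma~\ref{noise1-errbound} a little more carefully, keeping the weights $v^{k-1}_i$ on $\overline{S}$ instead of bounding them by $1$, and then to invoke the RSC of $A$ on the cone whose membership Lemma~\ref{noise1-errbound} already guarantees.

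First I would recall that, arguing exactly as in the proof of Lemma~\ref{noise1-errbound} up through \eqref{noise1p-eb}, the optimality of $x^k$ and the feasibility of $\overline{x}$ for \eqref{expm-subx} yield
\begin{equation*}
 \frac{1}{2n}\big\|A\delta^k\big\|^2
 \le \langle \mathcal{G}(\widehat{\varepsilon}),\mathcal{G}(\delta^k)\rangle
 +\lambda^{k-1}\!\sum_{i\in \overline{S}} v^{k-1}_i\big\|\delta^k_{\!_{\mathcal{J}_i}}\big\|
 -\lambda^{k-1}\!\!\sum_{i\in(S^{k-1})^c}\!\!\!v^{k-1}_i\big\|\delta^k_{\!_{\mathcal{J}_i}}\big\|.
\end{equation*}
Now I would split $\langle\mathcal{G}(\widehat{\varepsilon}),\mathcal{G}(\delta^k)\rangle$ over $S^{k-1}$ and $(S^{k-1})^c$, bound the part over $(S^{k-1})^c$ by $\|\mathcal{G}(\widehat{\varepsilon})\|_\infty\sum_{i\in(S^{k-1})^c}\|\delta^k_{\!_{\mathcal{J}_i}}\|$, and use $\min_{i\in(S^{k-1})^c}v^{k-1}_i\ge1-\overline{t}_{\phi}$ together with the hypothesis $\lambda^{k-1}\ge\frac{(3-\overline{t}_{\phi})\|\mathcal{G}(\widehat{\varepsilon})\|_\infty}{1-\overline{t}_{\phi}}$, which forces $\lambda^{k-1}(1-\overline{t}_{\phi})\ge\|\mathcal{G}(\widehat{\varepsilon})\|_\infty$, so that the $(S^{k-1})^c$-indexed terms combine into a nonpositive quantity and can be discarded. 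This leaves
\begin{equation*}
 \frac{1}{2n}\big\|A\delta^k\big\|^2
 \le \sum_{i\in S^{k-1}}\big\|\widehat{\varepsilon}_{\!_{\mathcal{J}_i}}\big\|\big\|\delta^k_{\!_{\mathcal{J}_i}}\big\|
 +\lambda^{k-1}\!\sum_{i\in \overline{S}} v^{k-1}_i\big\|\delta^k_{\!_{\mathcal{J}_i}}\big\|,
\end{equation*}
and the Cauchy--Schwarz inequality applied to each of the two sums, combined with $\|[\mathcal{G}(\delta^k)]_{S^{k-1}}\|\le\|\delta^k\|$ and $\|[\mathcal{G}(\delta^k)]_{\overline{S}}\|\le\|\delta^k\|$, gives
\begin{equation*}
 \frac{1}{2n}\big\|A\delta^k\big\|^2
 \le\Big(\big\|[\mathcal{G}(\widehat{\varepsilon})]_{S^{k-1}}\big\|
 +\lambda^{k-1}\sqrt{{\textstyle\sum_{i\in\overline{S}}}(v^{k-1}_i)^2}\,\Big)\|\delta^k\|.
\end{equation*}

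To finish, I would invoke Lemma~\ref{noise1-errbound}, whose conclusion $\sum_{i\in(S^{k-1})^c}\|\delta^k_{\!_{\mathcal{J}_i}}\|\le\frac{2}{1-\overline{t}_{\phi}}\sum_{i\in S^{k-1}}\|\delta^k_{\!_{\mathcal{J}_i}}\|$ is precisely the defining condition of $\mathcal{C}(\overline{S},|S^{k-1}|)$ taken with $S=S^{k-1}$; hence $\delta^k\in\mathcal{C}(\overline{S},|S^{k-1}|)$. By the assumed RSC of $A$ with constant $\gamma_k$ over this cone, $\frac{1}{2n}\|A\delta^k\|^2\ge\gamma_k\|\delta^k\|^2$, and substituting this lower bound into the previous display and dividing by $\|\delta^k\|$ (the case $\delta^k=0$ being trivial) produces the claimed estimate.

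I do not expect a genuine obstacle: this is in essence a more careful rerun of the proof of Lemma~\ref{noise1-errbound}. The two places that require attention are the cancellation of the $(S^{k-1})^c$-indexed contributions, which is exactly where the lower bound on $\lambda^{k-1}$ enters (it is what makes $\lambda^{k-1}(1-\overline{t}_{\phi})-\|\mathcal{G}(\widehat{\varepsilon})\|_\infty$ nonnegative), and checking that the constant $\frac{2}{1-\overline{t}_{\phi}}$ delivered by Lemma~\ref{noise1-errbound} matches the one in the definition of $\mathcal{C}(\overline{S},l)$, so that the RSC hypothesis can legitimately be applied to $\delta^k$.
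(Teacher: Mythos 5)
Your proposal is correct and follows essentially the same route as the paper: start from inequality \eqref{noise1p-eb}, use the lower bound on $\lambda^{k-1}$ together with $\min_{i\in(S^{k-1})^c}v^{k-1}_i\ge 1-\overline{t}_{\phi}$ to discard the $(S^{k-1})^c$-indexed terms, apply Cauchy--Schwarz to the two remaining sums, and combine with the RSC lower bound $\gamma_k\|\delta^k\|^2\le\frac{1}{2n}\|A\delta^k\|^2$, which is legitimate because Lemma \ref{noise1-errbound} places $\delta^k$ in $\mathcal{C}(\overline{S},|S^{k-1}|)$. The only differences from the paper's argument are cosmetic (the order in which the noise inner product is split and the cone membership is invoked).
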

 \begin{proof}
  Using inequality \eqref{noise1p-eb} and noting that
  $\delta^k\in \mathcal {C}(\overline{S},|S^{k-1}|)$
  by Lemma \ref{noise1-errbound}, we have
  \begin{align}
  \gamma_{k}\|\delta^k\|^2\le\frac{1}{2n}\|A\delta^k\|^2
  &\le \sum_{i=1}^m\|\widehat{\varepsilon}_{\!_{\mathcal{J}_i}}\|\|\delta^k_{\!_{\mathcal{J}_i}}\|
       +\lambda^{k-1}\!\sum_{i\in \overline{S}} v^{k-1}_i\|\delta^k_{\!_{\mathcal{J}_i}}\|
       -\lambda^{k-1}\!\sum_{i\notin S^{k-1}} v^{k-1}_i\|\delta^k_{\!_{\mathcal{J}_i}}\|\nonumber\\
  &\le \sum_{i=1}^m\| \widehat{\varepsilon}_{\!_{\mathcal{J}_i}}\|\|\delta^k_{\!_{\mathcal{J}_i}}\|
       +\lambda^{k-1}\!\sum_{i\in \overline{S}} v^{k-1}_i\|\delta^k_{\!_{\mathcal{J}_i}}\|
       -\lambda^{k-1}(1-\overline{t}_{\phi})\sum_{i\notin S^{k-1}}\|\delta^k_{\!_{\mathcal{J}_i}}\|\nonumber\\
  &\le {\textstyle\sum_{i\in S^{k-1}}}\| \widehat{\varepsilon}_{\!_{\mathcal{J}_i}}\|\|\delta^k_{\!_{\mathcal{J}_i}}\|
        +\lambda^{k-1}{\textstyle\sum_{i\in \overline{S}}}\,v^{k-1}_i\|\delta^k_{\!_{\mathcal{J}_i}}\|\nonumber\\
  &\le \sqrt{{\textstyle\sum_{i\in S^{k-1}}}\| \widehat{\varepsilon}_{\!_{\mathcal{J}_i}}\|^2}\,\big\|\delta^k\big\|
       +\lambda^{k-1}\sqrt{{\textstyle\sum_{i\in \overline{S}}}\,(v^{k-1}_i)^2}\,\big\|\delta^k\big\|\nonumber
 \end{align}
 where the third inequality is by
 $\lambda^{k-1}\ge \frac{(3-\overline{t}_{\phi})\|\mathcal{G}(\widehat{\varepsilon})\|_\infty}{1-\overline{t}_{\phi}}$.
 This implies the desired result.
 \end{proof}

  \noindent
  {\bf\large The proof of Theorem \ref{errbound1}:}
  \begin{aproof}
  For each $k\in\mathbb{N}$, define $S^{k-1}\!:=\overline{S}\cup\{i\notin \overline{S}\!: w_i^{k-1}>\overline{t}_{\phi}\}$.
  Notice that $\nu\le\frac{1-\overline{t}_{\phi}}{(3-\overline{t}_{\phi})\|\mathcal{G}(\widehat{\varepsilon})\|_\infty}$
  and $\frac{(3-\overline{t}_{\phi})\|\mathcal{G}(\widehat{\varepsilon})\|_\infty}{1-\overline{t}_{\phi}}\le\rho\nu^{-1}$.
  We have
  \(
   \lambda^{k-1}\ge\frac{(3-\overline{t}_{\phi})\|\mathcal{G}(\widehat{\varepsilon})\|_\infty}{1-\overline{t}_{\phi}}
  \)
  for all $k\in\mathbb{N}$. If $|S^{k-1}|\leq 1.5\overline{r}$ for some $k\in\mathbb{N}$,
  from Lemma \ref{noise2-errbound} it follows that
  \begin{align}\label{temp-ineq41}
   \|x^k-\overline{x}\|
   &\le \frac{1}{\kappa}\Big(\big\|[\mathcal{G}(\widehat{\varepsilon})]_{S^{k-1}}\big\|
        +\lambda^{k-1}\sqrt{{\textstyle\sum_{i\in \overline{S}}}\,(v^{k-1}_i)^2}\Big)\nonumber\\
   &\leq\frac{1}{\kappa}\Big(\big\|\mathcal{G}(\widehat{\varepsilon})\big\|_\infty\sqrt{1.5\overline{r}}
         +\lambda^{k-1}\sqrt{\overline{r}}\Big)
   \le \frac{\lambda^{k-1}(4-2\overline{t}_{\phi})}{\kappa(3-\overline{t}_{\phi})}\sqrt{1.5\overline{r}},
 \end{align}
  where the last inequality is due to $\|\mathcal{G}(\widehat{\varepsilon})\|_\infty\le
  \frac{1-\overline{t}_{\phi}}{3-\overline{t}_{\phi}}\lambda^{k-1}$ for all $k\in \mathbb{N}$.
  So, it suffices to argue that $|S^{k-1}|\leq 1.5\overline{r}$ for all $k\in\mathbb{N}$.
  When $k=1$, it automatically holds since $S^0=\overline{S}$ by $w^0\le \overline{t}_{\phi}e$.
  Now assume that $|S^{k-1}|\leq 1.5\overline{r}$ for all $k=l$ with $l\geq 1$.
  We shall prove that $|S^{l}|\leq 1.5\overline{r}$.
  Using \eqref{temp-ineq41} with $k=l$, we have $\|x^l-\overline{x}\|\le\frac{\lambda^{l-1}(4-2\overline{t}_{\phi})\sqrt{1.5\overline{r}}}{\kappa(3-\overline{t}_{\phi})}$.
  Notice that $i\in S^l\backslash\overline{S}$ implies $i\notin \overline{S}$ and $w_i^{l}\in(\overline{t}_{\phi},1]$.
  From $w_i^l\in\!\partial\psi^*(\rho\|x_{\!_{\mathcal{J}_i}}^l\|)
  =(\partial\psi)^{-1}(\rho\|x_{\!_{\mathcal{J}_i}}^l\|)$,
  \[
   \rho\|x^{l}_{\!_{\mathcal{J}_i}}\|\ge\psi'_-(w_i^l)=\phi'_-(w_i^l)\geq \phi'_+(\overline{t}_{\phi})\ge\frac{1}{1-t_{\phi}^*},
  \]
  where the equality is due to $\psi_{-}'(t)=\phi'(t)$ for all $t\in(0,1]$.
  This inequality implies
 \begin{align}\label{Lemma-noise42-eb}
  \sqrt{|S^l\backslash\overline{S}|}
  &\le\sqrt{{\textstyle\sum_{i\in S^l\backslash\overline{S}}}\,\rho^2(1-t_{\phi}^*)^2\|x^{l}_{\!_{\mathcal{J}_i}}\|^2}
   \leq \rho(1-t_{\phi}^*)\|x^{l}-\overline{x}\|\\
  &\le\frac{\rho\lambda^{l-1}(1-t_{\phi}^*)(4-2\overline{t}_{\phi})}{\kappa(3-\overline{t}_{\phi})}\sqrt{1.5\overline{r}}
  \le \sqrt{0.5\overline{r}},\nonumber
 \end{align}
 where the last inequality is due to
 $\rho\lambda^{l-1}\le\!\frac{(3-\overline{t}_{\phi})\kappa}{\sqrt{3}(4-2\overline{t}_{\phi})(1-t_{\phi}^*)}$
 implied by $\lambda^{l-1}\!=\nu^{-1}$ for $l=1$ and $\lambda^{l-1}=\rho\nu^{-1}$ for $l>1$.
 So, $|S^{l}|\leq 1.5\overline{r}$.
 Thus, $|S^{k-1}|\leq 1.5\overline{r}$ holds for all $k\in\mathbb{N}$.
 \end{aproof}

 \medskip

 In the following, we upper bound $(v^{k-1}_i)^2$ for $i\in\overline{S}$
 by means of $\mathbb{I}_{\Delta}(i)$ and $\mathbb{I}_{F^k}(i)$.
 \begin{alemma}\label{indexk}
  For each $k\ge 1$, let $F^k$ be the index set defined as in \eqref{Fk}.
  Then, it holds that
  \[
   \sqrt{{\textstyle\sum_{i\in \overline{S}}}(v^{k}_i)^2}
   \leq \sqrt{{\textstyle\sum_{i\in\overline{S}}}\,\mathbb{I}_{\Delta}(i)}
   +\sqrt{{\textstyle\sum_{i\in\overline{S}}}\,\mathbb{I}_{F^k}(i)}.
  \]
 \end{alemma}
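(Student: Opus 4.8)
The plan is to prove the pointwise estimate $(v_i^k)^2\le \mathbb{I}_{\Delta}(i)+\mathbb{I}_{F^k}(i)$ for every $i\in\overline{S}$, then to sum over $i\in\overline{S}$ and invoke the subadditivity $\sqrt{a+b}\le\sqrt{a}+\sqrt{b}$ for $a,b\ge 0$. By Step (S.2) of Algorithm \ref{Alg} one has $w_i^k\in[0,1]$, hence $0\le v_i^k=1-w_i^k\le 1$ and $(v_i^k)^2\le 1$; consequently the pointwise estimate holds trivially for any $i$ lying in $\Delta$ or in $F^k$. So the only case left is an index $i\in\overline{S}$ with $i\notin\Delta$ and $i\notin F^k$, and for such an $i$ I would show $v_i^k=0$, i.e. $w_i^k=1$.

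For such an index, $i\notin\Delta$ gives $\|\overline{x}_{\!_{\mathcal{J}_i}}\|>2\phi_{-}'(1)/\rho$, while $i\notin F^k$ gives $\big|\|x^k_{\!_{\mathcal{J}_i}}\|-\|\overline{x}_{\!_{\mathcal{J}_i}}\|\big|<\frac{1}{(1-t_{\phi}^*)\rho}$, so that $\|x^k_{\!_{\mathcal{J}_i}}\|>\frac{2\phi_{-}'(1)}{\rho}-\frac{1}{(1-t_{\phi}^*)\rho}$. I would then use the elementary convexity inequality $\phi_{-}'(1)\ge\frac{\phi(1)-\phi(t_{\phi}^*)}{1-t_{\phi}^*}=\frac{1}{1-t_{\phi}^*}$ (recall $\phi(1)=1$ and $\phi(t_{\phi}^*)=0$), which yields $\frac{1}{(1-t_{\phi}^*)\rho}\le\frac{\phi_{-}'(1)}{\rho}$ and hence the strict bound $\rho\|x^k_{\!_{\mathcal{J}_i}}\|>\phi_{-}'(1)$.

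It remains to convert this into $w_i^k=1$. Since $\psi$ is $\phi$ restricted to $[0,1]$ and $\phi'$ is nondecreasing with left derivative $\phi_{-}'(1)$ at $t=1$, for every scalar $s>\phi_{-}'(1)$ the map $t\mapsto st-\psi(t)$ is nondecreasing on $[0,1]$, so $\psi^*(s)=s-\psi(1)=s-1$ on the open ray $(\phi_{-}'(1),+\infty)$; thus $\psi^*$ is affine with slope $1$ there and $\partial\psi^*(s)=\{1\}$. By Remark \ref{remark-alg}(a) we have $w_i^k\in\partial\psi^*(\rho\|x^k_{\!_{\mathcal{J}_i}}\|)$, and since $\rho\|x^k_{\!_{\mathcal{J}_i}}\|>\phi_{-}'(1)$ this forces $w_i^k=1$, i.e. $v_i^k=0$. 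This establishes the pointwise estimate in all cases; summing over $i\in\overline{S}$ gives $\sum_{i\in\overline{S}}(v_i^k)^2\le\sum_{i\in\overline{S}}\mathbb{I}_{\Delta}(i)+\sum_{i\in\overline{S}}\mathbb{I}_{F^k}(i)$, and taking square roots followed by subadditivity yields the claim.

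The computations are all routine; the one point requiring care is obtaining $\partial\psi^*(s)=\{1\}$ (rather than merely $1\in\partial\psi^*(s)$) for $s>\phi_{-}'(1)$, which is exactly why the \emph{strict} inequality $\rho\|x^k_{\!_{\mathcal{J}_i}}\|>\phi_{-}'(1)$ is needed and, in turn, why the definitions of $\Delta$ and $F^k$ carry the factor $2$ and the threshold $\frac{1}{(1-t_{\phi}^*)\rho}$, together with the convexity bound $\phi_{-}'(1)\ge 1/(1-t_{\phi}^*)$.
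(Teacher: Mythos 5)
Your proposal is correct and follows essentially the same route as the paper's proof: a pointwise bound on $v_i^k$ (equivalently $(v_i^k)^2$) by $\mathbb{I}_{\Delta}(i)+\mathbb{I}_{F^k}(i)$, with the only nontrivial case being $i\notin\Delta\cup F^k$, where Remark \ref{remark-alg}(a) and the characterization of $\partial\psi^*$ on $(\phi_-'(1),+\infty)$ force $w_i^k=1$, followed by aggregation via the triangle inequality. Your writeup is in fact slightly more explicit than the paper's on the two hidden ingredients — the convexity bound $\phi_-'(1)\ge 1/(1-t_{\phi}^*)$ and the fact that $\partial\psi^*(s)$ is the singleton $\{1\}$ for $s>\phi_-'(1)$ — which the paper uses without comment.
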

 \begin{proof}
  Notice that $v^{k}_i=1-w_i^k\le 1$. If $i\in F^k$, clearly,
  $v^{k}_i\le \mathbb{I}_{F^k}(i)$. Otherwise, together with
  Remark \ref{remark-alg}(a) and $v_i^k=1-w_i^k$, it follows that
  \[
   v_i^k\le\mathbb{I}_{\big\{i:\,\|x^{k}_{\!_{\mathcal{J}_i}}\|\leq \phi'_-(1)/\rho\big\}}(i)
   \le \mathbb{I}_{\big\{i:\,\|\overline{x}_{\!_{\mathcal{J}_i}}\|\le\frac{1}{\rho(1-t^*_{\phi})}+\frac{\phi'_-(1)}{\rho}\big\}}(i)
   \le \mathbb{I}_{\Delta}(i).
  \]
  Hence, for each $i$, it holds that
  \(
  0\leq v^{k}_i\le\mathbb{I}_{\Delta}(i)+\mathbb{I}_{F^k}(i).
  \)
 The desired result follows by noting that $\|a+b\|\le \|a\|+\|b\|$ for all vectors $a$ and $b$.
 \end{proof}

 \medskip
  \noindent
  {\bf\large The proof of Theorem \ref{errbound2}:}
  \begin{aproof}
  For each $k\in\mathbb{N}$, define
 \(
   S^{k-1}:=\overline{S}\cup\{i\notin \overline{S}:\,w_i^{k-1}>\overline{t}_{\phi}\}.
  \)
  Since the conclusion holds for $k=1$, it suffices to consider $k\geq 2$.
  Now, from \eqref{Lemma-noise42-eb},
  \begin{align}\label{Geps-Sk}
   \big\|[\mathcal{G}(\widehat{\varepsilon})]_{S^{k-1}}\big\|
   &\le \big\|[\mathcal{G}(\widehat{\varepsilon})]_{\overline{S}}\big\|
        +\big\|\mathcal{G}(\widehat{\varepsilon})\big\|_\infty\sqrt{|S^{k-1}\backslash\overline{S}|}
    \le\|[\mathcal{G}(\widehat{\varepsilon})]_{\overline{S}}\|
       +\frac{\lambda^{k-1}(1-\overline{t}_{\phi})}{3-\overline{t}_{\phi}}\sqrt{|S^{k-1}\backslash\overline{S}|}\nonumber\\
   &\le \big\|[\mathcal{G}(\widehat{\varepsilon})]_{\overline{S}}\big\|
        +\rho(1-t_{\phi}^*)\lambda^{k-1}\frac{1-\overline{t}_{\phi}}{3-\overline{t}_{\phi}}\big\|x^{k-1}-\overline{x}\big\|.
 \end{align}
  In addition, from Lemma \ref{noise2-errbound} and Lemma \ref{indexk},
  it follows that
  \begin{align*}
   &\|x^k-\overline{x}\|\leq\frac{1}{\kappa}\Big(\|[\mathcal{G}(\widehat{\varepsilon})]_{S^{k-1}}\|
                     +\lambda^{k-1}\sqrt{{\textstyle\sum_{i\in \overline{S}}}\,(v^{k-1}_i)^2}\Big)\nonumber\\
   &\le \frac{1}{\kappa}\Big(\|[\mathcal{G}(\widehat{\varepsilon})]_{S^{k-1}}\|
         +\lambda^{k-1}\sqrt{{\textstyle\sum_{i\in \overline{S}}}\,\mathbb{I}_{\Delta}(i)}
         +\lambda^{k-1}\sqrt{{\textstyle\sum_{i\in \overline{S}}}\,\mathbb{I}_{F^k}(i)}\Big)\nonumber\\
   &\le \frac{1}{\kappa}\Big(\|[\mathcal{G}(\widehat{\varepsilon})]_{S^{k-1}}\|
       +\lambda^{k-1}\sqrt{{\textstyle\sum_{i\in \overline{S}}}\,\mathbb{I}_{\Delta}(i)}
       +\lambda^{k-1}\sqrt{{\textstyle\sum_{i\in \overline{S}}}\big[(1\!-\!t_{\phi}^*)^2
       |\|x^{k-1}_{\!_{\mathcal{J}_i}}\|-\|\overline{x}_{\!_{\mathcal{J}_i}}\||^2\rho^2\big]}\Big)\nonumber\\
   &\le \frac{1}{\kappa}\Big(\|[\mathcal{G}(\widehat{\varepsilon})]_{S^{k-1}}\|
        +\rho(1-t_{\phi}^*)\lambda^{k-1}\|x^{k-1}-\overline{x}\|+
       \lambda^{k-1}\sqrt{{\textstyle\sum_{i\in \overline{S}}}\,\mathbb{I}_{\Delta}(i)}\Big).
  \end{align*}
  where the third inequality is by the definition of $F^k$.
  Together with \eqref{Geps-Sk}, we obtain
  \begin{align*}
   \|x^k-\overline{x}\|
   &\le \frac{1}{\kappa}\Big(\|[\mathcal{G}(\widehat{\varepsilon})]_{\overline{S}}\|
        +\frac{\rho\lambda^{k-1}(1-t_{\phi}^*)(4-2\overline{t}_{\phi})\|x^{k-1}-\overline{x}\|}{3-\overline{t}_{\phi}}
        +\lambda^{k-1}\sqrt{{\textstyle\sum_{i\in \overline{S}}}\,\mathbb{I}_{\Lambda}(i)}\Big)\\
   &\le \frac{1}{\kappa}\Big(\|[\mathcal{G}(\widehat{\varepsilon})]_{\overline{S}}\|
                +\lambda^{k-1}\sqrt{{\textstyle\sum_{i\in \overline{S}}}\,\mathbb{I}_{\Delta}(i)}\Big)
                +\frac{1}{\sqrt{3}}\|\delta^{k-1}\|\\
   &=\frac{1}{\kappa}\Big(\|[\mathcal{G}(\widehat{\varepsilon})]_{\overline{S}}\|
                +\rho\nu^{-1}\sqrt{{\textstyle\sum_{i\in \overline{S}}}\,\mathbb{I}_{\Delta}(i)}\Big)
                +\frac{1}{\sqrt{3}}\|x^{k-1}-\overline{x}\|
 \end{align*}
  where the second inequality is using $\rho\lambda^{k-1}\le\frac{(3-\overline{t}_{\phi})\kappa}{\sqrt{3}(4-2\overline{t}_{\phi})(1-t_{\phi}^*)}$,
  and the last one is using $\lambda^{k-1}=\rho\nu^{-1}$ for $k\ge 2$.
  The desired result follows by this recursion inequality.
 \end{aproof}

 In order to achieve the result of Theorem \ref{theorem-consistency},
 we also need the following two technical lemmas
 where $\widehat{\delta}^{k}\!:=x^{k}\!-x^{{\rm LS}}$ for $k=1,2,\ldots$.

 \begin{alemma}\label{Lemma-noise1}
  For $k\ge 1$, if there is an index set $S^{k-1}\supseteq \overline{S}$
  such that $\min_{i\in(S^{k-1})^c}w^{k-1}_i\!\le\overline{t}_{\phi}$,
  then with $\lambda^{k-1}\geq \frac{2\|\mathcal{G}(\varepsilon^{{\rm LS}})\|_\infty}{1-\overline{t}_{\phi}}$
  it holds that
  \(
    \sum_{i\in (S^{k-1)^c}}\|\widehat{\delta}^k_{\!_{\mathcal{J}_i}}\|
    \le\frac{2}{1-\overline{t}_{\phi}}\sum_{i\in S^{k-1}}\|\widehat{\delta}^k_{\!_{\mathcal{J}_i}}\|.
  \)
 \end{alemma}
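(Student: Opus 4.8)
The plan is to transcribe the proof of Lemma \ref{noise1-errbound}, replacing $\overline{x}$ by the restricted least squares solution $x^{\rm LS}$ and the noise term $\widehat{\varepsilon}$ by $\varepsilon^{\rm LS}=\frac{1}{n}A^{\mathbb{T}}(Ax^{\rm LS}-b)$. First I would use the optimality of $x^k$ and the feasibility of $x^{\rm LS}$ for the subproblem \eqref{expm-subx}; here feasibility is precisely the standing assumption that the solution of \eqref{defxls} lies in $\Omega$. Expanding $\|Ax^k-b\|^2$ around $x^{\rm LS}$ and using $\frac{1}{n}\langle A\widehat{\delta}^k,Ax^{\rm LS}-b\rangle=\langle\widehat{\delta}^k,\varepsilon^{\rm LS}\rangle$ with $\widehat{\delta}^k=x^k-x^{\rm LS}$, this rearranges to
\[
  \frac{1}{2n}\|A\widehat{\delta}^k\|^2
  \le-\langle\widehat{\delta}^k,\varepsilon^{\rm LS}\rangle
   +\lambda^{k-1}\sum_{i=1}^m v_i^{k-1}\big(\|x^{\rm LS}_{\!_{\mathcal{J}_i}}\|-\|x^k_{\!_{\mathcal{J}_i}}\|\big).
\]

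Next I would exploit the two structural facts that distinguish this from Lemma \ref{noise1-errbound}: by Lemma \ref{LS-noise}, $\varepsilon^{\rm LS}_{\!_{\mathcal{J}_i}}=0$ for $i\in\overline{S}$, so $|\langle\widehat{\delta}^k,\varepsilon^{\rm LS}\rangle|\le\|\mathcal{G}(\varepsilon^{\rm LS})\|_\infty\sum_{i\notin\overline{S}}\|\widehat{\delta}^k_{\!_{\mathcal{J}_i}}\|$; and $x^{\rm LS}_{\!_{\mathcal{J}_i}}=0$ for $i\notin\overline{S}$, so the penalty difference splits into a sum over $\overline{S}$, bounded termwise by $\|\widehat{\delta}^k_{\!_{\mathcal{J}_i}}\|$ since $v_i^{k-1}\le1$, minus $\lambda^{k-1}\sum_{i\notin\overline{S}}v_i^{k-1}\|\widehat{\delta}^k_{\!_{\mathcal{J}_i}}\|$. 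Dropping the nonnegative contributions for $i\in S^{k-1}\backslash\overline{S}$ and using $v_i^{k-1}\ge1-\overline{t}_{\phi}$ for $i\in(S^{k-1})^c$ (as in Lemma \ref{noise1-errbound}), I obtain
\[
  \frac{1}{2n}\|A\widehat{\delta}^k\|^2
  +\big[\lambda^{k-1}(1-\overline{t}_{\phi})-\|\mathcal{G}(\varepsilon^{\rm LS})\|_\infty\big]\!\!\sum_{i\in(S^{k-1})^c}\!\!\|\widehat{\delta}^k_{\!_{\mathcal{J}_i}}\|
  \le\|\mathcal{G}(\varepsilon^{\rm LS})\|_\infty\!\!\sum_{i\in S^{k-1}\backslash\overline{S}}\!\!\|\widehat{\delta}^k_{\!_{\mathcal{J}_i}}\|
   +\lambda^{k-1}\!\sum_{i\in\overline{S}}\!\|\widehat{\delta}^k_{\!_{\mathcal{J}_i}}\|.
\]

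Finally, the hypothesis $\lambda^{k-1}\ge\frac{2\|\mathcal{G}(\varepsilon^{\rm LS})\|_\infty}{1-\overline{t}_{\phi}}$ gives $\|\mathcal{G}(\varepsilon^{\rm LS})\|_\infty\le\tfrac12\lambda^{k-1}(1-\overline{t}_{\phi})\le\lambda^{k-1}$, so the right-hand side is at most $\lambda^{k-1}\sum_{i\in S^{k-1}}\|\widehat{\delta}^k_{\!_{\mathcal{J}_i}}\|$ while the bracket on the left is at least $\tfrac12\lambda^{k-1}(1-\overline{t}_{\phi})$; discarding $\frac{1}{2n}\|A\widehat{\delta}^k\|^2\ge0$ and dividing through yields the asserted inequality. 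I do not expect any real obstacle: the argument is a routine adaptation of Lemma \ref{noise1-errbound}, and the only subtle points are that $x^{\rm LS}$ must be admissible in \eqref{expm-subx} (which is guaranteed by the standing assumption $x^{\rm LS}\in\Omega$) and that the vanishing of $\varepsilon^{\rm LS}$ on $\overline{S}$ is what both confines the cross term to indices outside $\overline{S}$ and allows the constant in the $\lambda^{k-1}$ lower bound to improve from $\tfrac{3-\overline{t}_{\phi}}{1-\overline{t}_{\phi}}$ to $\tfrac{2}{1-\overline{t}_{\phi}}$.
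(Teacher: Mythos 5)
Your proposal is correct and follows essentially the same route as the paper's own proof: optimality of $x^k$ versus feasibility of $x^{\rm LS}$ in \eqref{expm-subx}, the vanishing of $\varepsilon^{\rm LS}$ on $\mathcal{J}_{\overline{S}}$ to confine the cross term to $\overline{S}^{c}$, the lower bound $v_i^{k-1}\ge 1-\overline{t}_{\phi}$ on $(S^{k-1})^c$, and the final comparison using $\|\mathcal{G}(\varepsilon^{\rm LS})\|_\infty\le\frac{1}{2}\lambda^{k-1}(1-\overline{t}_{\phi})\le\lambda^{k-1}$. The remarks on why $x^{\rm LS}$ is admissible and why the constant improves to $\frac{2}{1-\overline{t}_{\phi}}$ are also accurate.
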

 \begin{proof}
 By the optimality of $x^k$ and the feasibility of $x^{{\rm LS}}$ to the subproblem \eqref{expm-subx},
  \[
   \frac{1}{2n}\|Ax^k-b\|^2 +\lambda^{k-1}\sum_{i=1}^mv_i^{k-1}\|x^k_{\!_{\mathcal{J}_i}}\|
   \leq\frac{1}{2n}\|Ax^{{\rm LS}}-b\|^2 +\lambda^{k-1}\sum_{i=1}^mv_i^{k-1}\|x^{{\rm LS}}_{\!_{\mathcal{J}_i}}\|,
  \]
 which, by the definition of $\varepsilon^{{\rm LS}}$, can be rearranged as follows:
  \[
   \frac{1}{2n}\|A\widehat{\delta}^k\|^2
   \le -\langle \varepsilon^{{\rm LS}},\widehat{\delta}^k\rangle
       +\lambda^{k-1}\sum_{i=1}^mv_i^{k-1}(\|x^{{\rm LS}}_{\!_{\mathcal{J}_i}}\|-\|x^k_{\!_{\mathcal{J}_i}}\|).
  \]
  Together with $x^{{\rm LS}}_{\!_{\mathcal{J}_i}}=0$ for all $i\notin\overline{S}$,
  it immediately follows that
  \begin{align}
   \frac{1}{2n}\|A\widehat{\delta}^k\|^2
   &\le-\sum_{i=1}^m\langle \varepsilon^{{\rm LS}}_{\!_{\mathcal{J}_i}},\widehat{\delta}^k_{\!_{\mathcal{J}_i}}\rangle
        +\lambda^{k-1}\sum_{i\in \overline{S}} v_i^{k-1}(\|x^{{\rm LS}}_{\!_{\mathcal{J}_i}}\|-\|x^k_{\!_{\mathcal{J}_i}}\|)
        -\lambda^{k-1}\sum_{i\notin \overline{S}} v_i^{k-1}\|x^k_{\!_{\mathcal{J}_i}}\|\nonumber\\
   &\le \sum_{i\notin \overline{S}}\|\varepsilon^{{\rm LS}}_{\!_{\mathcal{J}_i}}\|\|\widehat{\delta}^k_{\!_{\mathcal{J}_i}}\|
      +\lambda^{k-1}\sum_{i\in \overline{S}} v^{k-1}_i\|\widehat{\delta}^k_{\!_{\mathcal{J}_i}}\|
      -\lambda^{k-1}\sum_{i\notin S^{k-1}} v^{k-1}_i\|\widehat{\delta}^k_{\!_{\mathcal{J}_i}}\|\nonumber\\
   &\le \sum_{i\notin \overline{S}}\| \varepsilon^{{\rm LS}}_{\!_{\mathcal{J}_i}}\|\|\widehat{\delta}^k_{\!_{\mathcal{J}_i}}\|
       +\lambda^{k-1}\sum_{i\in \overline{S}} v^{k-1}_i\big\|\widehat{\delta}^k_{\!_{\mathcal{J}_i}}\big\|
       -\lambda^{k-1}(1-\overline{t}_{\phi})\sum_{i\notin S^{k-1}}\!\big\|\widehat{\delta}^k_{\!_{\mathcal{J}_i}}\big\|,\nonumber
  \end{align}
  where the equality is due to $\varepsilon^{{\rm LS}}_{\mathcal {J}_{\overline{S}}}=0$ implied by
  the optimality of $x^{{\rm LS}}$ to \eqref{defxls}. Thus,
  \begin{align}\label{Lemma-noise1p}
   &\frac{1}{2n}\|A\widehat{\delta}^k\|^2+\big[\lambda^{k-1}(1-\overline{t}_{\phi})-\|\mathcal{G}(\varepsilon^{{\rm LS}})\|_\infty\big]
    \sum_{i\notin S^{k-1}}\big\|\widehat{\delta}^k_{\!_{\mathcal{J}_i}}\big\| \nonumber\\
   &\le \sum_{i\in S^{k-1}\backslash\overline{S}}\big\|\varepsilon^{{\rm LS}}_{\!_{\mathcal{J}_i}}\big\|
        \big\|\widehat{\delta}^k_{\!_{\mathcal{J}_i}}\big\|
        +\lambda^{k-1}\sum_{i\in \overline{S}} v^{k-1}_i\|\widehat{\delta}^k_{\!_{\mathcal{J}_i}}\|
   \le \lambda^{k-1}\sum_{i\in S^{k-1}} \|\widehat{\delta}^k_{\!_{\mathcal{J}_i}}\|.
 \end{align}
  Notice that $\lambda^{k-1}\geq \frac{2\|\mathcal{G}(\varepsilon^{LS})\|_\infty}{1-\overline{t}_{\phi}}$.
  The desired result directly follows from \eqref{Lemma-noise1p}.
 \end{proof}

 If in addition the index set $S^{k-1}$ in Lemma \ref{Lemma-noise1} is such that $A$ satisfies
 the RSC over $\mathcal{C}(\overline{S},|S^{k-1}|)$, then the conclusion of Lemma \ref{Lemma-noise1}
 can be strengthened as follows.
 \begin{alemma}\label{Lemma-noise2}
  For $k\ge 1$, if there is an index set $S^{k-1}\supseteq \overline{S}$
  such that $\min_{i\in(S^{k-1})^c}w^{k-1}_i\!\le\overline{t}_{\phi}$
  and $A$ satisfies the RSC over $\mathcal {C}(\overline{S},|S^{k-1}|)$ with constant $\gamma_{k}$,
  then with $\lambda^{k-1}\geq \frac{2\|\mathcal{G}(\varepsilon^{LS})\|_\infty}{1-\overline{t}_{\phi}}$
  \[
   \|\widehat{\delta}^k\|\le\frac{1}{\gamma_{k}}\Big(\|\mathcal{G}(\varepsilon^{\rm{LS}})_{S^{k-1}}\|
   +\lambda^{k-1}\sqrt{{\textstyle\sum_{i\in \overline{S}}}(v^{k-1}_i)^2}\Big).
 \]
 \end{alemma}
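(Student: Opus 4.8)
The statement is the exact analogue of Lemma~\ref{noise2-errbound}, with the least squares residual $\varepsilon^{\rm LS}$ replacing $\widehat{\varepsilon}$ and $\widehat{\delta}^k=x^k-x^{\rm LS}$ replacing $\delta^k$, so the plan is to rerun that argument, this time starting from the intermediate estimate already established inside the proof of Lemma~\ref{Lemma-noise1}. First I would note that the hypotheses imposed here are precisely those of Lemma~\ref{Lemma-noise1}, so that lemma gives $\sum_{i\notin S^{k-1}}\|\widehat{\delta}^k_{\!_{\mathcal{J}_i}}\|\le\frac{2}{1-\overline{t}_{\phi}}\sum_{i\in S^{k-1}}\|\widehat{\delta}^k_{\!_{\mathcal{J}_i}}\|$, i.e. $\widehat{\delta}^k\in\mathcal{C}(\overline{S},|S^{k-1}|)$. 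Hence the RSC of $A$ over $\mathcal{C}(\overline{S},|S^{k-1}|)$ with constant $\gamma_k$ applies and yields $\gamma_k\|\widehat{\delta}^k\|^2\le\frac{1}{2n}\|A\widehat{\delta}^k\|^2$.

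Next I would pick up the chain of inequalities in the proof of Lemma~\ref{Lemma-noise1} just before~\eqref{Lemma-noise1p}, namely
\[
\frac{1}{2n}\|A\widehat{\delta}^k\|^2
\le \sum_{i\notin\overline{S}}\|\varepsilon^{\rm LS}_{\!_{\mathcal{J}_i}}\|\|\widehat{\delta}^k_{\!_{\mathcal{J}_i}}\|
+\lambda^{k-1}\!\sum_{i\in\overline{S}}v_i^{k-1}\|\widehat{\delta}^k_{\!_{\mathcal{J}_i}}\|
-\lambda^{k-1}(1-\overline{t}_{\phi})\!\sum_{i\notin S^{k-1}}\|\widehat{\delta}^k_{\!_{\mathcal{J}_i}}\|,
\]
where $\varepsilon^{\rm LS}_{\mathcal{J}_{\overline{S}}}=0$ has already been used (so the first sum runs effectively over $i\notin\overline{S}$). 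Splitting that first sum into its part over $S^{k-1}\backslash\overline{S}$ and its part over $(S^{k-1})^c$, and using the hypothesis $\lambda^{k-1}\ge\frac{2\|\mathcal{G}(\varepsilon^{\rm LS})\|_\infty}{1-\overline{t}_{\phi}}$ to get $\|\varepsilon^{\rm LS}_{\!_{\mathcal{J}_i}}\|\le\|\mathcal{G}(\varepsilon^{\rm LS})\|_\infty\le\frac{1-\overline{t}_{\phi}}{2}\lambda^{k-1}\le(1-\overline{t}_{\phi})\lambda^{k-1}$, the contribution from $(S^{k-1})^c$ is dominated by the last, negative term and cancels. What remains is
\[
\frac{1}{2n}\|A\widehat{\delta}^k\|^2
\le \sum_{i\in S^{k-1}}\|\varepsilon^{\rm LS}_{\!_{\mathcal{J}_i}}\|\|\widehat{\delta}^k_{\!_{\mathcal{J}_i}}\|
+\lambda^{k-1}\!\sum_{i\in\overline{S}}v_i^{k-1}\|\widehat{\delta}^k_{\!_{\mathcal{J}_i}}\|.
\]

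Finally I would apply the Cauchy--Schwarz inequality to each sum, bounding the first by $\big\|[\mathcal{G}(\varepsilon^{\rm LS})]_{S^{k-1}}\big\|\,\|\widehat{\delta}^k\|$ and the second by $\sqrt{\sum_{i\in\overline{S}}(v_i^{k-1})^2}\,\|\widehat{\delta}^k\|$, combine with $\gamma_k\|\widehat{\delta}^k\|^2\le\frac{1}{2n}\|A\widehat{\delta}^k\|^2$, and divide through by $\|\widehat{\delta}^k\|$ (the case $\widehat{\delta}^k=0$ being trivial). This gives the asserted bound. I do not expect a genuine obstacle here; the only points needing care are invoking Lemma~\ref{Lemma-noise1} so that the RSC inequality is legitimate, and checking that the stated lower bound on $\lambda^{k-1}$ is exactly what absorbs the noise contribution from the off-support coordinates.
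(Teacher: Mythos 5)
Your proposal is correct and follows essentially the same route as the paper: both invoke Lemma \ref{Lemma-noise1} to place $\widehat{\delta}^k$ in $\mathcal{C}(\overline{S},|S^{k-1}|)$ so the RSC bound applies, then use the intermediate inequality from that lemma's proof (your explicit absorption of the $(S^{k-1})^c$ noise term into the negative term is exactly how \eqref{Lemma-noise1p} is obtained, and the fact that $\varepsilon^{\rm LS}_{\mathcal{J}_{\overline{S}}}=0$ makes your sum over $S^{k-1}$ agree with the paper's sum over $S^{k-1}\backslash\overline{S}$), and finish with Cauchy--Schwarz and division by $\|\widehat{\delta}^k\|$.
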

 \begin{proof}
  By using the first inequality in \eqref{Lemma-noise1p} and $\widehat{\delta}^k\in\mathcal {C}(|S^{k-1}|)$
  by Lemma \ref{Lemma-noise1}, we have
  \begin{align}
   \gamma_{k}\|\widehat{\delta}^k\|^2 \le\frac{1}{2n}\|A\widehat{\delta}^k\|^2
   &\le \sum_{i\in S^{k-1}\backslash\overline{S}}\!\|\varepsilon^{\rm{LS}}_{\!_{\mathcal{J}_i}}\|\|\widehat{\delta}^k_{\!_{\mathcal{J}_i}}\|
     +\lambda^{k-1}\sum_{i\in \overline{S}} v^{k-1}_i\|\widehat{\delta}^k_{\!_{\mathcal{J}_i}}\|\nonumber\\
   &\le \sqrt{{\textstyle\sum_{i\in S^{k-1}\backslash\overline{S}}}\,
    \|\varepsilon^{\rm{LS}}_{\!_{\mathcal{J}_i}}\|^2}\,\big\|\widehat{\delta}^k\big\|
   +\lambda^{k-1}\sqrt{{\textstyle\sum_{i\in \overline{S}}}\,(v^{k-1}_i)^2}\,\big\|\widehat{\delta}^k\big\|,
  \end{align}
  where the last inequality is using $\|\widehat{\delta}^k\|^2=\sum_{i=1}^m\|\widehat{\delta}^k_{\!_{\mathcal{J}_i}}\|^2$.
  Thus, it follows that
  \begin{align*}
    \gamma_{k}\|\widehat{\delta}^k\|
    &\le\!\sqrt{{\textstyle\sum_{i\in S^{k-1}\backslash\overline{S}}}\,
    \|\varepsilon^{\rm{LS}}_{\!_{\mathcal{J}_i}}\|^2}
       +\lambda^{k-1}\sqrt{{\textstyle\sum_{i\in \overline{S}}}\,(v^{k-1}_i)^2}\\
    &=\|[\mathcal{G}(\varepsilon^{\rm{LS}})]_{S^{k-1}}\|
       +\!\lambda^{k-1}\sqrt{{\textstyle\sum_{i\in \overline{S}}}\,(v^{k-1}_i)^2},
  \end{align*}
  which implies the desired result. The proof is then completed.
  \end{proof}

    \medskip
  \noindent
  {\bf\large The proof of Theorem \ref{theorem-consistency}:}
  \begin{aproof}
  From $\nu\le\frac{1-\overline{t}_{\phi}}{2\|\mathcal{G}(\varepsilon^{\rm{LS}})\|_\infty}$
  and $\frac{\rho}{\nu}\ge \frac{2\max(\|\mathcal{G}(\varepsilon^{\rm{LS}})\|_\infty,
  2\kappa\|\mathcal{G}(\widehat{\varepsilon}^\dag)\|_\infty)}{1-\overline{t}_{\phi}}$,
  $\lambda^{k-1}\geq \frac{2\|\mathcal{G}(\varepsilon^{LS})\|_\infty}{1-\overline{t}_{\phi}}$
  for all $k\in \mathbb{N}$. We prove that the inequalities in \eqref{errk}
  hold for $k=1$. Since $w^0\le\overline{t}_{\phi}e$ and $S^0=\overline{S}$,
  the conditions of Lemma \ref{Lemma-noise2} are satisfied for $k=1$. Then,
 \begin{align*}
  \|x^1-x^{\rm{LS}}\|
  \le\frac{1}{\kappa}\Big(\|[\mathcal {G}(\varepsilon^{\rm{LS}})]_{S^{0}}\|
     +\!\lambda^0\sqrt{{\textstyle\sum_{i\in \overline{S}}}(v^{0}_i)^2}\Big)
  \le \frac{1}{\kappa}\Big(\|[\mathcal {G}(\varepsilon^{\rm{LS}})]_{\overline{S}}\|+\!\frac{\sqrt{|F^0|}}{\nu}\Big)
  =\frac{\sqrt{|F^0|}}{\kappa\nu}.
 \end{align*}
  where the equality is due to $[\mathcal {G}(\varepsilon^{\rm{LS}})]_{\overline{S}}=0$.
  Since $\|x^{\rm{LS}}_{\!_{\mathcal {J}_i}}-\overline{x}_{\!_{\mathcal {J}_i}}\|
  \le \|\mathcal{G}(\widehat{\varepsilon}^\dagger)\|_\infty$ for $i=1,\ldots,m$
  by \eqref{xLS-err} and $\frac{\rho}{\nu}\ge\frac{4\kappa\|\mathcal{G}(\widehat{\varepsilon}^\dag)\|_\infty}{1-\overline{t}_{\phi}}$,
  it follows that for all $i\in F^{1}$,
  \[
  \|x^{\rm{LS}}_{\!_{\mathcal {J}_i}}\!-x^1_{\!_{\mathcal {J}_i}}\|
   \ge \|\overline{x}_{\!_{\mathcal {J}_i}}\!-\!x^1_{\!_{\mathcal {J}_i}}\|
      -\|\overline{x}_{\!_{\mathcal {J}_i}}\!-x^{\rm{LS}}_{\!_{\mathcal {J}_i}}\|\ge\frac{1}{(1\!-t_{\phi}^*)\rho}-\frac{\rho(1-\!\overline{t}_{\phi})}{4\nu\kappa}
   \ge\frac{3\sqrt{5}}{(1\!+3\sqrt{5})(1\!-t_{\phi}^*)\rho},
  \]
  where the last inequality is due to $\rho\le\!\sqrt{\frac{4\kappa\nu}{(1-t_{\phi}^*)(1+3\sqrt{5})}}$.
  From the last two inequalities,
 \begin{align*}
  \sqrt{|F^{1}|}
  &\le \frac{(1+3\sqrt{5})(1-t_{\phi}^*)\rho}{3\sqrt{5}}
    \sqrt{{\textstyle\sum_{i=1}^m}\|x^{\rm{LS}}_{\!_{\mathcal{J}_i}}-x^1_{\!_{\mathcal{J}_i}}\|^2}\\
  &=\frac{\rho(1-t_{\phi}^*)(1+3\sqrt{5})\|x^{\rm{LS}}-x^1\|}{3\sqrt{5}}
  \le\frac{\rho(1\!-\!t_{\phi}^*)(1\!+\!3\sqrt{5})}{6\nu\kappa}\sqrt{|F^0|}.
 \end{align*}
 Consequently, the conclusion holds for $k=1$. Now assuming that the conclusion holds
 for $k=l-1$ with $l\ge 2$, we shall prove that the conclusion holds for $k=l$.
 To this end, we first argue that $|S^{l-1}|\leq 1.5\overline{r}$. Indeed,
 if $i\in S^{l-1}\backslash\overline{S}$, we have $i\notin \overline{S}$
 and $w_i^{l-1}\in(\overline{t}_{\phi},1]$. From $w_i^{l-1}\in\!\partial\psi^*(\rho\|x_{\!_{\mathcal{J}_i}}^{l-1}\|)$,
 we have $\rho\|x^{l-1}_{\!_{\mathcal{J}_i}}\|
 \ge\phi'_-(w_i^{l-1})\geq \phi'_+(\overline{t}_{\phi})\geq\frac{1}{1-t_{\phi}^*}$. Thus,
 \begin{align}\label{induction-ineq}
  \sqrt{|S^{l-1}\backslash\overline{S}|}
  &\le\sqrt{|F^{l-1}|}\le \frac{\rho^2(1-t_{\phi}^*)(1+3\sqrt{5})}{6\nu\kappa}\sqrt{|F^{l-2}|}\leq\cdots\nonumber\\
  &\le\Big(\frac{\rho^2(1-t_{\phi}^*)(1+3\sqrt{5})}{6\nu\kappa}\Big)^{l-1}\sqrt{|F^0|}
  \le\sqrt{(4/6)^{2l-2}|F^{0}|}\leq\sqrt{0.5\overline{r}},
 \end{align}
 where the first inequality is due to $S^{l-1}\backslash\overline{S}\subseteq F^{l-1}$
 and $l\ge 2$. This implies $|S^{l-1}|\leq 1.5\overline{r}$, and hence the conditions of
 Lemma \ref{Lemma-noise2} in Appendix C are satisfied. Consequently,
 \begin{align}
  \|x^l-x^{\rm{LS}}\|
  &\le\frac{1}{\kappa}\Big(\|[\mathcal{G}(\varepsilon^{\rm{LS}})]_{S^{l-1}}\|
            +\lambda^{l-1}\sqrt{{\textstyle\sum_{i\in \overline{S}}}\,(v^{l-1}_i)^2}\Big)\nonumber\\
  &\le\frac{1}{\kappa}\Big(\|\mathcal{G}(\varepsilon^{\rm{LS}})_{S^{l-1}\backslash\overline{S}}\|
      +\lambda^{l-1}\sqrt{{\textstyle\sum_{i\in\overline{S}}}\,\mathbb{I}_{F^{l-1}}(i)}\Big)\nonumber\\
  &\le \frac{1}{\kappa}\Big(\|\mathcal{G}(\varepsilon^{\rm{LS}})\|_\infty\sqrt{|S^{l-1}\backslash\overline{S}|}
       +\lambda^{l-1}\sqrt{|F^{l-1}\cap \overline{S}|}\Big)\nonumber\\
  &\leq\frac{\lambda^{l-1}}{\kappa}\Big(\frac{1}{2}\sqrt{|F^{l-1}\backslash\overline{S}|}+\sqrt{|F^{l-1}\cap \overline{S}|}\Big)
  \leq\frac{\rho}{\nu\kappa}\sqrt{1.25|F^{l-1}|}\leq\frac{\sqrt{5}\rho}{2\nu\kappa}\sqrt{|F^{l-1}|},\nonumber
  \end{align}
  where the second inequality is using $\varepsilon^{\rm{LS}}_{\mathcal {J}_{\overline{S}}}=0$,
  $\rho>\frac{2\phi'_-(1)}{\min_{i\in\overline{S}}\|\overline{x}_{\!_{\mathcal {J}_i}}\|}$
  and Lemma \ref{indexk}, the fourth one is due to $\lambda^{l-1}\geq2\|\mathcal{G}(\varepsilon^{\rm{LS}})\|_\infty$,
  and the fifth one is since $\frac{1}{2}a+b\leq\sqrt{1.25(a^2+b^2)}$ for all $a,b\in \mathbb{R}$.
  In addition, by using the same argument as those for $k=1$, for all $i\in F^{l}$ we have
  \(
   \|x^l_{\!_{\mathcal{J}_i}}-x^{\rm{LS}}_{\!_{\mathcal{J}_i}}\|
   \ge \frac{3\sqrt{5}}{1+3\sqrt{5}}\frac{1}{(1-t_{\phi}^*)\rho},
  \)
  and hence
  \(
   \sqrt{|F^{l}|}\le\frac{\rho^2(1-t_{\phi}^*)(1+3\sqrt{5})}{6\nu\kappa}\sqrt{|F^{l-1}|}.
  \)
  Thus, we complete the proof of the case $k=l$, and the inequalities in \eqref{errk} hold.

  \medskip

  Since $\rho^2\nu^{-1}(1-t_{\phi}^*)(1+3\sqrt{5})\le 4\kappa$,
  we have $\frac{\rho^2(1-t_{\phi}^*)(1+3\sqrt{5})}{6\nu\kappa}\le \frac{2}{3}$.
  Together with \eqref{induction-ineq},
  \[
   \sqrt{|F^{\overline{k}}|}
   \le\Big(\frac{\rho^2(1-t_{\phi}^*)(1+3\sqrt{5})}{6\nu\kappa}\Big)^{\overline{k}-1}
    \sqrt{|F^0|}<1,
  \]
  which implies that $|F^{k}|=0$ when $k\ge \overline{k}$.
  Together with the first inequality in \eqref{errk}, we have
  $x^k=x^{\rm{LS}}$ when $k\geq \overline{k}$.
  From \eqref{xLS-err} and $\rho\le\!\sqrt{\frac{4\kappa\nu}{(1-t_{\phi}^*)(1+3\sqrt{5})}}$,
  for all $i\in \overline{S}$,
  \[
   |\|\overline{x}_{\!_{\mathcal{J}_i}}\|-\|x^{LS}_{\!_{\mathcal{J}_i}}\||
   \le\|\overline{x}_{\!_{\mathcal{J}_i}}-x^{LS}_{\!_{\mathcal{J}_i}}\|
   \leq \|\mathcal{G}(\widehat{\varepsilon}^\dagger)\|_\infty\le\frac{\rho(1-\overline{t}_{\phi})}{4\nu\kappa}
   \le \frac{(1-\overline{t}_{\phi})}{(1\!-t_{\phi}^*)(1+3\sqrt{5})\rho}.
  \]
  This, together with $\min_{i\in\overline{S}}\|\overline{x}_{\!_{\mathcal{J}_i}}\|\ge\frac{2\phi'_-(1)}{\rho}
  \geq\frac{2}{\rho(1-t_{\phi}^*)}$, implies that $\|x^{\rm{LS}}_{\!_{\mathcal{J}_i}}\|>0$ for all $i\in \overline{S}$.
  Thus, ${\rm supp}(\mathcal{G}(x^{\rm{LS}}))=\overline{S}$, and consequently
  ${\rm supp}(\mathcal{G}(x^{k}))=\overline{S}$ for all $k\ge\overline{k}$.
 \end{aproof}

 \end{document}